\newtheorem{theorem}{Theorem}[section]
\newtheorem{corollary}[theorem]{Corollary}
\newtheorem{lemma}[theorem]{Lemma}
\newtheorem{proposition}[theorem]{Proposition}
\theoremstyle{definition}
\newtheorem{definition}[theorem]{Definition}
\newtheorem{remark}[theorem]{Remark}
\newtheorem{assump}[theorem]{Assumption}
\numberwithin{equation}{section}
\definecolor{scarletred}{RGB}{207,16,32}
\definecolor{navyblue}{HTML}{000080}
\patchcmd{\@addmarginpar}{\ifodd\c@page}{\ifodd\c@page\@tempcnta\m@ne}{}{}
\def\grad{\nabla}
\def\cF{\mathcal{F}}
\def\smskip{\smallskip}
\def\texitem#1{\par\smskip\noindent\hangindent 25pt
               \hbox to 25pt {\hss #1 ~}\ignorespaces}
\def\norm#1{\|#1\|}
\newcommand{\BEAS}{\begin{eqnarray*}}
\newcommand{\EEAS}{\end{eqnarray*}}
\newcommand{\BEA}{\begin{eqnarray}}
\newcommand{\EEA}{\end{eqnarray}}
\newcommand{\BEQ}{\begin{eqnarray}}
\newcommand{\EEQ}{\end{eqnarray}}
\newcommand{\BIT}{\begin{itemize}}
\newcommand{\EIT}{\end{itemize}}
\newcommand{\BNUM}{\begin{enumerate}}
\newcommand{\ENUM}{\end{enumerate}}
\newcommand{\BA}{\begin{array}}
\newcommand{\EA}{\end{array}}
\newcommand{\reals}{\mathbb{R}}
\newcommand{\integers}{\mathbb{Z}}
\newcommand{\diag}{\mathop{\bf diag}}
\def\mgrev#1{\textcolor{black}{#1}}
\definecolor{mygreen}{rgb}{0.4, 0.69, 0.2}
\newif\ifpagenumbering
\newcommand{\Diag}{{\mbox{Diag}}}
\newcommand{\Cml}{\mathcal{C}_\mu^L(\mathbb{R}^d)}
\newcommand{\beq}{\begin{eqnarray}}
	\newcommand{\eeq}{\end{eqnarray}}
\newcommand{\beqs}{\begin{eqnarray*}}
	\newcommand{\eeqs}{\end{eqnarray*}}
\newcommand{\E}{\mathbb{E}}
\newcommand{\R}{\mathbb{R}}
\newcommand{\mg}[1]{{\color{black} #1}}
\newcommand{\mgb}[1]{{\color{black} #1}}
\newcommand{\mgbis}[1]{{\color{black} #1}}
\newcommand{\mgc}[1]{{\color{black} #1}}
\newcommand{\ys}[1]{{\color{black} #1}}
\crefname{theorem}{theorem}{theorems}
\Crefname{theorem}{Theorem}{Theorems}
\crefname{definition}{definition}{definitions}
\Crefname{definition}{Definition}{Definitions}
\crefname{assump}{assumption}{assumptions}
\Crefname{assump}{Assumption}{Assumptions}
\def\sa#1{\textcolor{black}{#1}} 
\def\mg#1{\textcolor{black}{#1}}
\begin{document}
	\setcounter{page}{1}
	\vspace*{1.0cm}
	\title[ACCELERATED GRADIENT METHODS WITH BIASED GRADIENT
	ESTIMATES]{Accelerated Gradient Methods with Biased Gradient Estimates: Risk Sensitivity, High-Probability Guarantees,
		\\and Large Deviation Bounds}
	\author[M. G\"urb\"uzbalaban, Y. Syed, N. S. Aybat]{Mert G\"urb\"uzbalaban$^{1,3}$, Yasa Syed$^{1}$, Necdet Serhat Aybat$^{2}$}
	\maketitle
	
	\begingroup
	\renewcommand{\thefootnote}{\fnsymbol{footnote}}
	\footnotetext[1]{The authors can be contacted at:
		\href{mailto:mg1366@rutgers.edu}{mg1366@rutgers.edu} (M.~Gürbüzbalaban),
		\href{mailto:yasa.syed@rutgers.edu}{yasa.syed@rutgers.edu} (Y.~Syed),
		\href{mailto:nsa10@psu.edu}{nsa10@psu.edu} (N.~S.~Aybat).}
	\endgroup
	\setcounter{footnote}{0}
	\vspace*{-0.6cm}

	\begin{center}
		{\footnotesize
			$^1$Department of Statistics, Rutgers University, Piscataway, NJ, USA 
			\\ $^2$Department of Industrial Engineering, Pennsylvania State University, University Park, PA, USA \\
			$^3$Department of Management Science \& Information Systems, Rutgers University, Piscataway, NJ, USA 
	}\end{center}

\vskip 4mm {\small\noindent {\bf Abstract.}
	{We study trade-offs between convergence rate and robustness to gradient errors in the context 
		of first-order methods. Our focus is on generalized momentum methods (GMMs)—a broad class that includes Nesterov’s accelerated gradient, heavy-ball, and gradient descent \sa{methods}— for minimizing \sa{smooth} strongly convex objectives. We allow stochastic gradient errors that may be adversarial and biased, and quantify robustness \sa{of these methods to gradient errors} via the risk-sensitive index (RSI) from robust control theory. For quadratic objectives with i.i.d.\ Gaussian noise, we give closed form \sa{expressions} for RSI in terms of solutions to $2\times$ 2 matrix Riccati equations, 
		revealing a Pareto frontier between RSI and convergence rate over the choice of stepsize and momentum \sa{parameters}. We then prove a large-deviation principle for time-averaged suboptimality in the large iteration limit and show that the rate function is, up to \sa{a} scaling, \sa{the convex 
			conjugate of the RSI function}. We further show that the rate function and RSI are linked to the $H_\infty$-norm—a measure of robustness to \sa{the} worst-case deterministic gradient errors—so that stronger worst-case robustness (smaller $H_\infty$-norm) leads to 
		\sa{sharper decay of the tail probabilities for the average suboptimality.} 
		Beyond quadratics, under potentially biased sub-Gaussian gradient errors, we derive 
		\sa{non-asymptotic} bounds on a finite-time analogue of the RSI, yielding finite-time high-probability guarantees and non-asymptotic large-deviation bounds for \sa{the} averaged iterates. \sa{In this more general scenario of smooth strongly convex functions,} we also observe an analogous trade-off between RSI and convergence-rate bounds. To our knowledge, these are the first non-asymptotic guarantees for GMMs with biased gradients and the first risk-sensitive analysis of GMMs. Finally, we provide numerical experiments on a robust regression problem to illustrate our results.
		\looseness=-1}
	
	\noindent {\bf Keywords.}
	momentum methods, accelerated gradient methods, stochastic optimization, biased stochastic gradients, risk sensitivity, high-probability bounds, robustness to gradient errors

	\section{Introduction}
	First-order optimization algorithms form the backbone of modern large-scale optimization, underpinning recent advances across a range of disciplines, including machine learning, signal processing and statistics \cite{bottou2010large,bottou2018optimization,beck2009fast,aybat2011first,aybat2012first,gurbuzbalaban2019iag}. These methods are often the preferred computational approach for obtaining low- to medium-accuracy solutions to large-scale problems, owing to their inexpensive iterations and relatively mild dependence on problem dimension and data size. 
	
	Traditional analyses of first-order methods assume exact gradient information and focus on characterizing the convergence rate of the \sa{iterate sequence using a properly defined metric}. 
	Within the family of first-order methods, gradient descent (GD) is a fundamental algorithm, widely regarded as the canonical baseline \cite{nesterov2018lectures}. Momentum-based accelerations—such as Polyak’s heavy-ball (HB) method \cite{polyakintroduction}, Nesterov’s accelerated gradient (NAG) \cite{nesterov-original}, and more recent triple-momentum methods (TMM) \cite{scoy-gmm-ieee,HuRobust,hu2017dissipativity}—offer faster convergence under convexity and smoothness assumptions when exact gradients are available. In practice, however, gradient information is rarely exact and typically contains stochastic or deterministic errors.
	
	
	Deterministic errors commonly arise from finite-precision arithmetic in numerical computations. They can also result from approximating gradients by solving auxiliary problems~\cite{daspremont,devolder2014first}, or by using \sa{approximation methods based on finite-differences} 
	\cite{scheinberg2022Finite}. Moreover, such errors frequently appear in algorithms that cycle through data points, 
	e.g., incremental gradient methods~\cite{bertsekas2002nonlinearprog,bertsekas2011incremental}. \sa{On the other hand, stochastic errors also} naturally occur in large-scale optimization, data science and machine learning settings. These arise \mgbis{commonly, including} (i) when stochastic noise is deliberately added to gradient computations to preserve data privacy, as in differentially private optimization~\cite{kuru2022diffprivaccopt}—for example, \cite{pmlr-v22-rajkumar12} explores such mechanisms in the context of empirical risk minimization; and (ii) when gradients are estimated from mini-batches, \sa{i.e., 
		using} sample averages over randomly selected data subsets, a standard practice in large-scale or streaming environments, as in stochastic gradient and stochastic approximation methods~\cite{PolyakJuditskyAcceleration,robbins1951stochastic}. Such errors often exhibit Gaussian or sub-Gaussian behavior, particularly when gradient estimates are bounded or when the batch size is sufficiently large to invoke Central Limit Theorem-type arguments \cite{bernstein2018signsgd,bernstein2019signsgd}. 
	
	Gradient errors, when they are stochastic, may be unbiased—having zero mean when conditioned on the iterates \sa{computed so far} \cite{robbins1951stochastic, nemirovski2009robust}—as in the case of uniform sampling of data points with replacement within stochastic gradient descent methods or in differentially private optimization settings where centered noise is added to the gradients for data privacy reasons \cite{bottou2018optimization,kuru2022diffprivaccopt}. However, in many practical scenarios, the errors may be biased containing a drift\ys{:} (i) when the data points are dependent or when the data points are sampled with 
	\sa{techniques} that can invoke bias such as without-replacement sampling, \ys{random reshuffling}, or biased-variance reduction mechanisms \cite{even2023stochasticgradientdescentmarkovian, shamir2016withoutreplacement, nguyen2017sarahnovelmethodmachine, driggsbiasedSG2022, tran2022nesterovacceleratedshufflinggradient, yu2025highprobabilityguaranteesrandom, Mert-RR}\ys{;} (ii) when the gradients are compressed or quantized \cite{beznosikov2024biasedcompressiondistributedlearning, stich2021errorfeedbackframeworkbetterrates, aji2017sparsecommunication, Beikmohammadi_2025, karimireddy2019errorfeedbackfixessignsgd}\ys{;} (iii) when the stochastic gradients are clipped or normalized \cite{koloskova2023revisitinggradientclippingstochastic, chen2021understandinggradientclippingprivate, mai2021stabilityconvergencestochasticgradient, xiao2023dpclippingbias}\ys{;} (iv) in the context of inexact inner solves arising in machine learning applications such as \mgbis{robust learning \cite{zhu2025mean,grz-risk}}, reinforcement learning, and meta-learning \cite{wu2020actorcriticmethods, bhandari2018finitetimeanalysistemporal, shaban2019bilevel}\ys{;} (v) finite-precision arithmetic errors or errors due to smoothing of the objective when estimating the gradients \cite{xia2023influencestochasticroundofferrors, scheinberg2022Finite}\ys{;} (vi) in adversarial learning settings when deterministic perturbations are added to the data points when computing stochastic gradients that depend on the data points \cite{madry2019deeplearningmodelsresistant, gu2023adversarialtraininggradientdescent}. Because such \emph{biased errors} accumulate over time, classical analyses that assume \emph{unbiased} stochastic noise can mispredict performance and fail to quantify robustness of the underlying first-order method to gradient errors.\looseness=-1 
	
	When gradient errors are persistent or decay slowly, the iterates may fail to converge—os\-cil\-lating around the optimal solution, deviating significantly before eventually converging, or even diverging entirely \cite{bertsekas2002nonlinearprog, bertsekas2011incremental, devolder2014first, flammarion2015averagingtoacceleration}. 
	Robustness—understood as sensitivity to gradient errors and quantified by an appropriate solution-accuracy metric—is therefore a critical performance criterion
	\cite{devolder2014first, Hardt-blog}. In particular, while momentum-based methods such as NAG, HB, and TMM are known to converge faster than \sa{the} standard GD in noise-free settings for convex or strongly convex problems, they are generally less robust to gradient errors \cite{devolder2014first, daspremont, flammarion2015averagingtoacceleration, schmidt2011convergence}. As a result, designing effective 
	\sa{algorithm} parameters requires a careful understanding of the trade-offs between convergence speed and robustness.
	
	{Existing robustness analyses of momentum methods have primarily concentrated on two extremes. \sa{On one 
			end,} a growing line of work investigates stochastic, unbiased noise, establishing mean-square or high-probability convergence guarantees under light-tailed assumptions (e.g.,~\cite{aybat2019universally, siopt-ragm, fallah2022robust, sapd, laguel2024high}). \sa{On the other 
			end,} control-theoretic approaches employ the \sa{$H_\infty$-norm} to characterize worst-case deterministic gradient errors \cite{gurbuzbalaban2023robustly}, thereby modeling the gradient as deterministically biased. However, $H_\infty$-based analyses usually ignore stochastic variability and \mgc{typically do not provide} high-probability guarantees on the iteration complexity for obtaining approximate solutions under stochastically biased gradients.}
	
	Only a handful of recent studies have addressed \emph{biased stochastic gradient errors} in the context of \sa{momentum-accelerated} methods; however, this line of research remains limited---even for convex and strongly convex optimization problems that are typically easier to analyze compared to non-convex problems. In particular, to the best of our knowledge, no existing work provides \emph{non-asymptotic high-probability} or \emph{large-deviation guarantees} for the iterates of momentum methods in the presence of both stochastic and deterministic bias, in the strongly convex setting. To bridge this gap, we focus on \sa{the} \emph{strongly convex and smooth minimization} \sa{setting,} and investigate the \emph{robustness of momentum methods} to biased gradient errors within a general three-parameter family—step size~$\alpha$ and momentum parameters~$\beta$ and~$\nu$—\sa{which is} often referred to in the literature as \emph{generalized momentum methods} (GMM), see e.g.,~\cite{can2022entropic,gurbuzbalaban2023robustly} and the references therein. This framework is broad and expressive: with appropriate parameter choices, it recovers gradient descent (GD) and popular momentum schemes, including heavy-ball (HB), Nesterov’s accelerated gradient (NAG), and the triple momentum method (TMM), as special cases.
	
	{We consider a general noise model that permits sub-Gaussian gradient errors, which may be biased.
		This unified formulation is flexible enough to capture bounded deterministic errors, stochastic unbiased errors with sub\ys{-}Gaussian tails, or combinations of both.} We propose the use of the \emph{risk-sensitive index}---a concept from robust control theory \cite{jacobson1973exponential, whittle1981risk, glover1988state}---as a robustness measure to assess the impact of bias and stochasticity on algorithmic performance. Given an iteration budget~\sa{$K\in\mathbb{Z}_+$} and a strongly convex, smooth objective function~\sa{$f:\reals^d\to\reals$}, we define the risk-sensitive cost \sa{$R_K:\reals\to\reals$} {as
		\beq
		R_K(\theta) = \frac{2\sigma^{2}}{\theta\,(K+1)}
		\log\left(\mathbb{E}\exp\left[\frac{\theta}{2\sigma^{2}}\,S_K\right]\right), \quad 
		S_K = \sum_{k=0}^{K} \left(f(x_k) - f_*\right),
		\label{eq-risk-sensitive-cost-intro}
		\eeq
		where} \sa{$f_*=\min_x f(x)$, $\sigma^2$ is the variance proxy of the sub-Gaussian noise}, 
	and $S_K$ denotes the \emph{cumulative suboptimality} of the iterates $\{x_k\}_{k=0}^K$, which depends on the choice of parameters~$(\alpha,\beta,\nu)$. We also consider its stationary analogue, known as the \emph{risk-sensitive index}, defined as $
	R(\theta) = \limsup_{K \to \infty} R_K(\theta)
	$. This metric appears in the literature for the control of dynamical systems driven by Gaussian noise \cite{jacobson1973exponential, whittle1981risk, glover1988state}. Ours is a generalization to sub-Gaussian noise settings.
	
	The parameter $\theta > 0$ \mgbis{in the cost $R_K(\theta)$} provides a continuous interpolation between the 
	\emph{risk-neutral} regime---corresponding to the limit $\theta \to 0$, 
	\sa{capturing the} average performance $\mathbb{E}\!\left[\tfrac{S_K}{K+1}\right]$---and 
	progressively more \emph{risk-averse} regimes that place increasing emphasis on tail events {for larger positive values of $\theta$, \sa{and approximating the worst-case behavior for sufficiently large $\theta$~\cite{whittle2002risk}}}. \mg{Conversely, 
		$\theta<0$ induces a risk-seeking regime that down-weights large suboptimality events and favors variability. Thus, by varying $\theta$, the risk-sensitive cost spans average-case, risk-seeking, and worst-case perspectives with a unified lens.} 
	In addition, the risk-sensitive cost $R_K(\theta)$ is a scaled \sa{cumulant-generating function, i.e., natural logarithm of the moment generating function,} and Chernoff bounds turn $R_K(\theta)$ into explicit exponential tails for suboptimality, yielding \emph{high-probability} bounds and \emph{rate functions} that characterize the decay rate for the average suboptimality $\mathbb{P}\!\left[\tfrac{S_K}{K+1} > t\right]$.
	These properties of the risk-sensitive index make it interesting to study in the context of optimization algorithms.\looseness=-1 
	
	To the best of our knowledge, this work provides the first risk-sensitive analysis of generalized momentum methods. Unlike prior research, which has been predominantly risk-neutral and has typically relied on the assumption of unbiased stochastic gradients, our framework accommodates gradient noise that may be biased, thereby delivering high-probability guarantees that remain valid under biased gradient settings. 
	
	\noindent \textbf{Contributions.} Our main contributions are built on several pillars.
		{First, we analyze the strongly convex \emph{quadratic} setting under i.i.d.~isotropic Gaussian gradient noise. This case provides a clean and tractable foundation, allowing us to develop explicit characterizations before addressing the more general class of \sa{smooth}, strongly convex functions subject to potentially biased noise. In the quadratic setting}, we investigate how the risk-sensitive index $R(\theta)$ depends on the given \sa{algorithm} parameters $(\alpha,\beta,\nu)$ and risk-aversion parameter $\theta$.
		By modeling \sa{the algorithmic class} GMM as a linear time-invariant dynamical system and leveraging some existing results from robust control theory, we argue that $R(\theta)$ admits an explicit characterization 
		\sa{through solving} a discrete-time algebraic Riccati equation (DARE) that depends on the \sa{algorithm} parameters. However, this is a matrix equation 
		\sa{over} $2d\times 2d$ matrices, and in general, it entails solving a $4d\times 4d$ generalized eigenvalue problem, which is computationally prohibitive for large $d$ requiring at least $\mathcal{O}(d^3)$ operations. To this end, we establish a dimension-reduction result: for quadratic objectives subject to i.i.d.\ \sa{isotropic} Gaussian noise with variance $\sigma^2$, we show that the DARE decomposes into $d$ independent \emph{$2\times 2$ Riccati equations}. Each such equation can be solved in $\mathcal{O}(1)$ time, allowing the overall solution to be computed in $\mathcal{O}(d)$ operations \sa{in parallel}. This yields a closed-form expression for the risk-sensitive index in terms of easily solvable $2\times 2$ DAREs. Furthermore, for gradient descent, we can solve the DAREs explicitly and provide an explicit expression for the risk-sensitive index.  
		Building on our characterization of the risk-sensitive index, we then establish a fundamental trade-off between risk (as measured by this index) and convergence rate (quantifying solution speed in the absence of noise) in momentum methods. In particular, parameter choices that slow \sa{down convergence speed also reduce the associated risk and enlarge the range of $\theta$ values} for which the risk remains finite, \sa{i.e.,} 
		improved risk sensitivity and greater flexibility in $\theta$ \sa{comes} at the expense of \sa{a reduction in the} convergence rate. Specifically, the risk is finite if and only if $\theta>0$ satisfies
		$
		\sqrt{\theta}\, H_\infty < \sqrt{d},
		$
		where $H_\infty$ \sa{denotes} the \sa{$H_\infty$-norm} of the dynamical system associated with the GMM iterations—explicitly characterized for quadratic objectives in \cite{gurbuzbalaban2023robustly}. Because the $H_\infty$-norm quantifies robustness to \sa{the worst-case deterministic} gradient errors \cite{gurbuzbalaban2023robustly}, this relation implies that greater robustness, \sa{i.e., smaller $H_\infty$-norm,}
		permits a wider range of $\theta$ for which the risk is finite, \mgbis{thereby linking risk sensitivity (a probabilistic performance measure) to deterministic worst-case robustness in momentum-based methods}. 
		\mgrev{While connections between risk sensitivity and the $H_\infty$-norm exist in the context of control systems and controlled Markov processes \cite[Sec. VI]{fleming2006controlled}, \cite{glover1988state,iglesias2000entropy}; what these connections mean in the context of momentum-based optimization algorithms subject to gradient errors is first explored in our work to our knowledge.} 
		We then compare momentum methods—HB, NAG, GD, and TMM—across different parameter settings, focusing on their risk profiles, \mgbis{i.e., how their risk sensitivity varies with stepsize and momentum.} We find that HB with standard parameters (proposed by Polyak \cite{polyakintroduction}) converges faster than NAG (with standard parameters as in \cite{nesterov2018lectures}) but incurs higher risk, whereas GD (with the common stepsize $\alpha=1/L$ where $L$ is the Lipschitz constant of the gradient) is slower yet exhibits \sa{a lower risk than both methods.} These trade-offs trace a Pareto-optimal frontier over parameter choices, delivering the best attainable risk for a target convergence rate --we illustrate this frontier numerically \mgbis{by leveraging our dimension-reduction framework to compute the risk index}.
		
		Second, we establish a large-deviation principle for the average suboptimality \(\Delta_K \coloneqq S_K/(K+1)\), quantifying the exponential decay rate of rare-event probabilities for \(\Delta_K\) \mgbis{in the quadratic setting.} \mgbis{This also allows us to control the probability of rare events associated with suboptimality of the ergodic averages of the iterates.} We show that the rate function is the Fenchel–Legendre transform of a scaled version of the risk-sensitive index \(R(\theta)\). A key challenge to establish such a result is that the summands \(f(x_k)-
		\sa{f_*}\) that appear in the sum $S_K$ are neither independent nor identically distributed—even when the \sa{gradient} noise \(\{w_{k+1}\}_{k\ge0}\) is i.i.d.)—so classical results such as Cramér’s theorem do not apply. Instead, we verify the hypotheses of the Gärtner–Ellis theorem \cite{dembo2009large}, which accommodates such non-i.i.d.\ sequences, to identify the rate function. The analysis takes advantage of our characterizations of the risk-sensitive index for momentum methods. \mgbis{These results demonstrate that the rate function and the risk-sensitive index—which govern the tail behavior of suboptimality in the large-iteration limit—are intrinsically linked to the $H_\infty$-norm. Moreover, they show that greater robustness to \sa{the} worst-case deterministic errors (as quantified by the $H_\infty$-norm) is associated with a sharper decay of tail probabilities, reflected in a larger rate function.}\looseness=-1
		
		\mgbis{In our third set of results, we extend the analysis to the minimization of strongly convex and smooth functions. In this setting, explicit characterizations of the risk-sensitive index and the $H_\infty$-norm are generally unavailable. While upper bounds $\overline{H}_\infty$ exist for the $H_\infty$-norm \cite{gurbuzbalaban2023robustly}, no analogous bounds are known for the risk-sensitive cost or the risk-sensitive index. To bridge this gap, we adopt a more general noise model in which gradient errors are sub-Gaussian and may be biased. This model subsumes the Gaussian noise studied in the quadratic case and accommodates stochastic errors with sub-Gaussian tails—including bounded stochastic errors, deterministic errors, and their mixtures}. 
		We derive upper bounds on $R_K(\theta)$ at every iteration $K$, and translate these into \emph{finite-time high-probability guarantees} and \emph{non-asymptotic large-deviation upper bounds} for the averaged iterates. \mgbis{Our approach relies on introducing a novel Lyapunov function together with a backward-induction argument, inspired by dynamic programming methods, 
			to iteratively estimate the risk-sensitive cost.} Taking the limit as \(K \to \infty\), we derive bounds on the risk-sensitive index \(R(\theta)\) and associated large-deviation upper bounds. These results can be interpreted as non-asymptotic counterparts of the asymptotic characterizations established in the quadratic case. To the best of our knowledge, such non-asymptotic high-probability and large-deviation bounds for \textit{biased} gradient estimates in the general class of momentum methods considered here have not previously appeared in the literature. Analogous to the quadratic case, we establish intrinsic trade-offs between risk and convergence rate in the design of first-order accelerated methods. Moreover, we find that the bounds on the \(H_\infty\) norm control the range of \(\theta\) values for which the risk remains finite. 
		\mgbis{Our bounds connect $R(\theta)$, high-probability guarantees, and large-deviation rates to the $H_\infty$-norm, thereby sharpening the link between probabilistic performance and worst-case robustness beyond the quadratic setting as well. They show that greater robustness to deterministic errors (as quantified by the $H_\infty$-norm) is associated with sharper tail decay in average suboptimality and reduced bias in the averaged iterates.}\looseness=-1
			\looseness=-1
			
			\ys{Fourth, we provide numerical experiments that illustrate our results. In the strongly convex quadratic setting, our  
				experiments show that a faster convergence rate corresponds to a higher risk sensitivity 
				\sa{for each momentum method considered in this paper}. We provide \sa{Pareto-optimal frontiers} for GD, NAG, HB to compare the 
				\sa{convergence rate vs. risk} trade-offs across these three methods. We also compare \sa{large-deviation} rate functions across GMMs. 
				Lastly, in the strongly convex setting, 
				\sa{we illustrate the behavior of risk-sensitive cost across iterations}  for HB, \mgbis{RS-HB (a heavy-ball variant more robust to deterministic errors~\cite{gurbuzbalaban2023robustly})}, TMM, NAG, and GD.}   

				\section{Related Work}
				\ys{
					Under a light-tailed (sub-Gaussian) noise assumption, high-probability convergence guarantees for stochastic gradient descent (SGD) and its momentum variants have been established for convex objectives \cite{nemirovski2009robust, hazan2014beyondregret, harvey2019nonsmoothSGD, armacki2024highprobabilityconvergenceboundsnonlinear, kakade2008onlinestronglyconvex, rakhlin2012makinggradientdescentoptimal, davis2019lowprobabilityhighconfidence, jain2019makingiteratesgdinformation} and for non-convex objectives \cite{ghadimi2013stochasticfirstzerothordermethods, li2020highprobabilityanalysisadaptive, armacki2024highprobabilityconvergenceboundsnonlinear}. Beyond this setting, several works relax the light-tailed assumption and instead provide guarantees under bounded variance noise 
					\cite{gorbunov2020heavytailednoiseclipping, gorbunov2024highprobabilitycomplexitybounds} or, for sub-Weibull noise \cite{li2022highprobnonconvexsgd}. 
					Unlike our setting, however, these works assume \textit{unbiased} gradient noise and do not extend to the general class of momentum methods (GMM) studied in this paper. 
					
					{\color{black}\sa{On the other hand, for particular biased gradient noise settings, there is a body of existing work that provides guarantees in expectation}, i.e., bounds on the expected suboptimality over the randomness in the noise model, as opposed to high-probability guarantees. These results are obtained under noise models different from ours, typically in more specialized settings and for particular applications. For instance, without-replace\-ment sampling (WRS) in finite-sum problems leads to biased noise, with guarantees established for SGD and Nesterov’s accelerated gradient under various WRS schemes \cite{tran2022nesterovacceleratedshufflinggradient, shamir2016withoutreplacement, Mert-RR}. Variance-reduction methods such as SAG and SARAH also yield biased gradient estimates, for which in-expectation guarantees are available \cite{driggsbiasedSG2022, nguyen2017sarahnovelmethodmachine, even2023stochasticgradientdescentmarkovian}. In distributed learning, compression and quantization introduce bias, with guarantees provided in \cite{Beikmohammadi_2025, karimireddy2019errorfeedbackfixessignsgd, beznosikov2024biasedcompressiondistributedlearning, stich2021errorfeedbackframeworkbetterrates, even2023stochasticgradientdescentmarkovian}. Gradient clipping and normalization, often used for heavy-tailed noise or differential privacy, also induce bias \cite{chen2021understandinggradientclippingprivate, xiao2023dpclippingbias, koloskova2023revisitinggradientclippingstochastic, mai2021stabilityconvergencestochasticgradient}. Other sources include low-precision arithmetic \cite{xia2023influencestochasticroundofferrors}, non-i.i.d. data in statistical learning \cite{wu2020actorcriticmethods, bhandari2018finitetimeanalysistemporal}, and truncated backpropagation strategies for bi-level optimization \cite{shaban2019bilevel}. \sa{Furthermore, there are also few papers that} 
						attempt to unify such biased-noise settings, \sa{e.g.,}  \cite{demidovich2023guidezoobiasedsgd} \sa{provides} in-expectation guarantees, \sa{and} \cite{tadipatri2025convergencestochasticheavyball} \sa{establishes} almost-sure convergence guarantees for stochastic HB with decaying stepsizes under a biased noise model for general (possibly nonconvex) objectives, where the bias term may grow proportionally with the gradient norm. 
						\mgbis{However, none of these works provides large-deviation bounds; moreover, their analyses rely on decaying step sizes converging to zero and do not apply to the constant step size considered in our work.}\looseness=-1
					}
					
					{\color{black}Compared to in-expectation guarantees, high-probability guarantees under biased noise are less developed. While in-expectation guarantees can be converted into high-probability guarantees using strategies such as longer runs of the algorithm combined with Chebyshev’s inequality, or parallel runs with selection of the best outcome \cite[p. 243]{nemirovsky1983problem}, \cite{hsu2016loss}, these strategies often produce loose complexity bounds with respect to the target probability and desired precision and can be impractical in streaming data settings \cite{pmlr-v235-gorbunov24a}, \cite[Remark 13]{laguel2024high}}. Existing high-probability results include random reshuffling for SGD \cite{Mert-RR, yu2025highprobabilityguaranteesrandom} and non-linear SGD variants with heavy-tailed noise where clipping, quantization, or normalization induce bias \cite{cutkosky2021highprobabilityboundsnonconvexstochastic,armacki2024highprobabilityconvergenceboundsnonlinear}. 
				}

			{The literature on establishing \sa{large-deviation principles (LDP)} for stochastic GD and its momentum variants is sparse. \sa{Previous 
					work on LDP has been 
					mainly on stochastic approximation algorithms,} including first- and zeroth-order stochastic gradient methods \cite{dupuis1985largedeviations, dupuis1987largedeviations, dupuis1988largedeviationsrecursivealgos,hult2025weakconvergenceapproachlarge}. However, these 
				\sa{papers} have a different focus than ours: \sa{rather than analyzing suboptimality across iterations under a constant step size, they instead} study the vanishing–step-size regime by letting the stepsize go to zero, quantifying deviations of the stochastic dynamics from their mean trajectory (which is determined by the continuous-time gradient flow) and deriving exponential estimates for escape times from neighborhoods of a point when the gradients are subject to unbiased noise or to stationary Markovian noise, \sa{i.e., they do not consider the large iteration limit for constant stepsize.}
				More recently, Bajovi\'c et al.~\cite{bajovic2022largedeviationsratesstochastic} derive 
				\sa{LDP results} with matching upper and lower bounds for stochastic GD with a decreasing step size 
				\sa{assuming} twice continuously differentiable smooth, strongly convex objectives under unbiased gradient noise. For quadratic objectives, they obtain the rate function in closed form in the large iteration limit, showing that higher-order noise moments---not just the variance---govern rare, large errors of the last iterate. \sa{On the other hand, in 
					\cite{armacki2025largedeviationupperbounds} the authors consider \textit{unbiased} heavy-tailed noise, and for SGD 
					with decaying stepsize sequence,} they establish an LDP upper bound for the minimum norm-squared of gradients \mgbis{for nonconvex, lower-bounded objectives with Lipschitz-continuous gradients}. 
				\mgbis{In the unbiased gradient setting for strongly convex minimization, \cite{ghadimi2013optimal} establishes large-deviation results for the convergence rate of the AC-SA algorithm, strengthening the earlier results of \cite{ghadimi2012optimal}. However, neither of these works considers constant step sizes or the broader class of momentum methods studied here, nor do they address \textit{biased} gradient noise or the trade-offs between convergence rate and risk sensitivity.} 
			}
			
			{\sa{Next, we are going to discuss a related body of work from the risk perspective.} Risk sensitivity, and more broadly, entropic risk, has been covered in the literature largely in the control theory context \cite{jacobson1973exponential, whittle1994risk, whittle1981risk, glover1988state}. These works consider centered, independent and identically distributed (i.i.d.) Gaussian noise. In particular, 
				\sa{
					in \cite{glover1988state}, an exact analysis of $R(\theta)$ is provided in the linear–quadratic control setting. \mgbis{These results are not given in the context of optimization algorithms but rather for general linear dynamical systems. In our work, however, we interpret GMM as a linear dynamical system when the objective is a quadratic and show that their result translates directly into a formula for the risk-sensitive cost of GMM algorithms in this setting.}
					\mgbis{However, computing the risk-sensitivity index via the approach of \cite{glover1988state} requires integrating the system’s transfer function in the frequency domain, which---using the results of \cite{iglesias2000entropy}, \cite[Remark~4.11]{peters2012minimum}, and \cite[Lemma~3.4]{stoorvogel1993discrete}---can be reduced to solving $2d \times 2d$ Riccati equations, entailing at least $\mathcal{O}(d^3)$ computational complexity.}
					By exploiting the special structure of the GMM, we show that it suffices to solve $2\times 2$ Riccati equations, which involve three unknowns—the entries of a symmetric 
					$2\times 2$ matrix—and three corresponding equations; \sa{moreover,} the approach in \cite{glover1988state} applies only to quadratics and does not 
					\sa{extend} to strongly convex objectives.}  
			}  
			More recently, \sa{in \cite{siopt-ragm} the authors} introduced robustness measures related to asymptotic optimality under unbiased gradient errors, analyzing quadratic functions and, more generally, \sa{smooth} strongly convex functions. In particular, \sa{the trade-offs between convergence rate and robustness to gradient errors for GD and NAG have been analyzed in \cite{siopt-ragm};} however, in contrast to ours,
			\sa{the approach in \cite{siopt-ragm}} does not extend to \textit{biased} gradient estimates or deterministic worst-case errors \sa{--the rate vs risk trade-off scheme introduced in \cite{siopt-ragm} for first-order optimization methods is later extended in \cite{sapd} to analyze first-order primal-dual methods for solving strongly convex-strongly concave saddle point problems subject to unbiased gradient errors with finite variance.} 
			\sa{In a different direction from \cite{siopt-ragm}, a more recent} work \cite{can2022entropic} proposes an alternative robustness measure motivated by the entropic risk of the iterates and provides high-probability bounds for GMM in the unbiased \sa{sub-Gaussian} gradient noise setting. However, their analysis does not accommodate \textit{biased} gradient estimates, focuses solely on the suboptimality of the last iterate $x_K$ rather than on ergodic averages or cumulative suboptimality $S_K$, and does not establish large-deviation bounds. Compared to prior work, we treat (i) constant step sizes (not decaying), (ii) biased sub-Gaussian gradient errors (not just unbiased), and (iii) risk-sensitive metrics (RSI/LDP) and their link to $H_\infty$-norm in the context of momentum-based optimization algorithms.

			\noindent \textbf{\sa{Notation.}} 
			\sa{Let $\mathbb{Z}_+$ and $\mathbb{R}_{+}$ denote positive integers and reals including $0$, respectively; and $\mathbb{R}_{++}=\reals_+\setminus\{0\}$.}
			A function $f:\mathbb{R}^d\to\mathbb{R}$ is called $\mu$\emph{-strongly convex} if the function $x \mapsto f(x)-\frac{1}{2}\mu \|x\|^2$ is convex on $\mathbb{R}^d$ for some constant $\mu>0$. A continuously differentiable function $f:\mathbb{R}^d\to\mathbb{R}$ is \emph{$L$-smooth} if its gradient satisfies $\|\nabla f(x) - \nabla f(y) \|\leq L \|x-y\|$ for all $x,y \in \mathbb{R}^d$. Let $\Cml$ denote the set of all functions \sa{$f:\R^d\to\R$} that are both $\mu$-strongly convex and $L$-smooth. 
			\sa{It can be shown that any $f\in\Cml$} satisfies
			\beq
			\frac{\mu}{2}\|x-y\|^2 \leq f(x) - f(y) - \nabla f(y)^\top  (y-x) \leq \frac{L}{2}\|x-y\|^2,\quad\sa{\forall~x,y\in\R^d.}
			\label{ineq-strcvx-smooth}
			\eeq
			Due to strong convexity, $f$ admits a unique global minimum on $\mathbb{R}^d$ which we will denote by $x_*$. \sa{From \eqref{ineq-strcvx-smooth}, one has $\mu\leq L$, and} we assume $\mu < L$ throughout the paper \sa{--otherwise, for $\mu=L$, the class $\Cml$ is trivial, i.e., $f(x)=\frac{\mu}{2}\norm{x}^2$.} Let $\mathsf{I}_d$ denote the $d\times d$ identity and let $0_d$ \ys{be the $d \times d$ zero matrix}. We drop the subscript $d$ in some cases 
			\sa{when} it is clear from the context. \sa{Given matrix $A$, let $A_{ij}$ denote the entry on the $i$-th row and $j$-th column of $A$. The spectral radius $\rho(A)$ of a square matrix $A$ is defined as the largest modulus of the eigenvalues of $A$, and $\|A\|$ denotes the spectral norm of $A$.} \mgc{We use $\det(A)$ and $\operatorname{trace}(A)$ to denote the determinant and the trace of a matrix $A$}. \mg{
				Throughout the paper, $A \otimes B$ denotes the Kronecker product of the matrices $A$ and $B$.} 
			\section{Preliminaries}
			Let $f\in\Cml$ be given. We consider the unconstrained 
			problem \sa{$f_*:=\min_{x\in \mathbb{R}^d} f(x)$}. The Generalized Momentum Method (GMM) \cite{can2022entropic,gurbuzbalaban2023robustly} consists of the following iterations
			\begin{subequations} \label{RBMM}
				\begin{eqnarray}
					x_{k+1}&=&x_{k}-\alpha \nabla f( y_k ) +\beta(x_{k}- x_{k-1}), \label{RBMM: Iter1}\\
					y_{k}&=&x_k+\nu (x_k- x_{k-1}), \label{RBMM: Iter2}
				\end{eqnarray}
			\end{subequations} 
			\sa{starting from an arbitrary initial point $x_0 \in \mathbb{R}^d$ with the convention that $x_{-1}=x_0$. GMM} admits three non-negative parameters $\alpha, \beta$ and $\nu$, 
			\sa{where} $\alpha>0$ is the step size, and $\beta$ and $\nu$ are called ``momentum parameters". 
			This method generalizes a number of momentum-averaging-based first-order algorithms. If $\nu=0$, GMM is equivalent to the heavy-ball method (HB) of Polyak \cite{polyakintroduction}. If we choose $\beta=\nu$, GMM recovers Nesterov's accelerated gradient (NAG) method \cite{nesterov-original}. On the other hand, when $\beta=\nu=0$, GMM reduces to the gradient descent (GD) method. {\sa{Finally}, the triple momentum method (TMM) corresponds to the parameter choice 
				$
				\alpha=\tfrac{1+\bar \rho}{L}$,
				$\beta=\tfrac{\bar\rho^2}{2-\bar\rho}$, 
				$\nu=\tfrac{\bar\rho^2}{(1+\bar\rho)(2-\bar\rho)}$,
				where $\bar\rho = 1 - 1/\sqrt{\kappa}$~\cite{scoy-gmm-ieee}}. 
			
			\ys{A growing body of work has employed control-theoretic tools to analyze the robustness of momentum-based optimization algorithms to noise \cite{siopt-ragm, gurbuzbalaban2023robustly, hu2017dissipativity, lessard2016analysis, can2019accelerated, sapd}. To the best of our knowledge, 
				our 
				\sa{work} is the first to investigate the risk sensitivity of the dynamical systems associated with GMMs, with the aim of characterizing the trade-offs between their sensitivity to gradient noise and their convergence rates.}\footnote{\mgbis{Throughout this paper, we use the terms \emph{gradient noise} and \emph{gradient errors} interchangeably.}} We 
			\sa{study} the performance of GMM algorithms subject to additive gradient noise, \sa{i.e.,} at each iteration $k\geq 0$, instead of \sa{employing} the actual gradient $\nabla f(y_k)$, 
			\sa{suppose we only} have access to its noisy \sa{estimate:} 
			\beq \tilde \nabla f(y_k, w_{\ys{k+1}}) \sa{:=} \nabla f(y_k) + w_{\ys{k+1}},
			\label{eq-gradient-noise}
			\eeq where $w_{k+1} \in \mathbb{R}^d$ is the additive gradient noise \sa{at $y_k$}. Assumptions on the exact 
			\sa{nature of this noise will be provided later.} We start with presenting the dynamical system reformulation of \sa{\eqref{RBMM}} as a discrete-time dynamical system:
			\begin{subequations}
				\label{Sys: RBMM}
				\begin{align} 
					&\xi_{k+1}=A \xi_{k}+Bu_{\ys{k+1}},\\
					&y_k = C \xi_k, \quad u_{\ys{k+1}} = \tilde \nabla f(y_k, w_{\ys{k+1}})  = \nabla f(y_k) + w_{\ys{k+1}},
				\end{align}
			\end{subequations}
			where $A, B$ and $C$ are system matrices of the \sa{form:} \beq A = \tilde{A}\otimes \mathsf{I}_d,  \quad B = \tilde{B}\otimes \mathsf{I}_d, \quad C = \tilde{C}\otimes \mathsf{I}_d, 
			\label{def-ABC}
			\eeq
			with
			\begin{align}\label{def: system mat for TMM}
				\tilde{A}:=\begin{bmatrix} 
					(1+\beta) & -\beta  \\ 
					1 & 0
				\end{bmatrix} , \;\; 
				\tilde{B}:=\begin{bmatrix} 
					-\alpha \\ 
					0
				\end{bmatrix},\;\; 
				\tilde{C}:=\begin{bmatrix}
					(1+\nu) & -\nu 
				\end{bmatrix},
			\end{align} 
			and
			\begin{equation}\xi_k := \begin{bmatrix}  x_k  \\
					x_{k-1} 
				\end{bmatrix}
				\label{def-ksi-k},\
			\end{equation}
			is the \emph{state vector} which 
			\sa{is defined by the iterates} at step $k$ and the previous iterate \sa{at $k-1$}, \sa{i.e., $x_k$ and $x_{k-1}$, respectively.}  We also define 
			\begin{equation}\xi_* := \begin{bmatrix}  x_*  \\
					x_* 
				\end{bmatrix}
				\label{def-ksi-star},\
			\end{equation}
			which consists of two copies of the global optimum $x_*$ \sa{concatenated vertically}. The vector \sa{$u_{k+1}$} is the noisy gradient evaluated at the point $y_k$.
			Notice that we can rewrite the noisy GMM iterations \eqref{Sys: RBMM} as\begin{subequations}
				\label{Sys:stoc-RBMM}
				\begin{eqnarray} 
					\xi_{k+1}&=&A \xi_{k}+B\nabla f(C\xi_k) +  B w_{\ys{k+1}},\\
					z_k &=& F(\xi_k),
				\end{eqnarray}
			\end{subequations}
			where $z_k$ is the output of the system that is of interest (defined through an arbitrary map $F:\mathbb{R}^{2d} \to \mathbb{R}^{ \ys{p}}$ for some $\ys{p}\geq 1$ and the matrices $A,B$ and $C$ are as in \eqref{def: system mat for TMM}. Various choices of $F$ are admissible, \sa{e.g.,} {
				setting
				\beq
				F(\xi_k) := \overline{w}\!\big(f(x_k) - f(x_*)\big),
				\label{def-F-output-seq-generator}
				\eeq
				for an increasing scalar function $\overline{w}: \mathbb{R}_+ \to \mathbb{R}_+$ yields a functional that quantifies a weighted measure of suboptimality.}
			
			\mg{Previous research on the robustness of momentum methods against gradient noise predominantly considered \sa{either} \sa{\textit{stochastic}} unbiased noise that is 
				\sa{assumed to be} Gaussian or light-tailed 
				\cite{aybat2019universally, siopt-ragm,  can2022entropic, mohammadirobustness2021, van2021speed, sapd, zhang2022sapd+},
				or deterministic worst-case noise \cite{gurbuzbalaban2023robustly, devolder2013exactness, daspremont}. \ys{The existing research on \textit{biased} stochastic gradient methods with momentum to our knowledge is quite limited}.
				In this study, we expand this scope \mgb{by considering \ys{the case when} \mgbis{the gradient noise} is light-tailed but is not necessarily \ys{un}biased}. We 
				\sa{first begin} our discussion by introducing the necessary filtrations.}
			
			\mg{GMM generates a sequence of iterates \sa{$\{x_k\}_{k\geq 0}$}, each situated within a defined \sa{filtered} probability space $(\Omega, \mathcal{F}, \mathbb{P})$ \sa{such that $\mathcal{F}_k\subset \cF$ is defined as the $\sigma$-algebra generated by the iterates $x_0, x_1, \dots, x_k$ for all $k\geq 0$.}}
			\ys{The following assumption says that the noise $w_{k+1}$ at step 
				\sa{$k\in\integers_+$} has light tails; \sa{but, it} is not necessarily unbiased.}
			\begin{assump}\label[assump]{assump-exp-moment-det-additive} 
					\sa{Let $\{w_{k+1}\}_{k\geq0}$ be an adapted process, i.e., $w_{k+1}$ is $\cF_{k+1}$-measurable for $k\geq 0$.} 
					The norm $\|w_{k+1}\|$ is sub-Gaussian conditional on $\mathcal{F}_k$ with variance proxy $\sigma^2$, i.e., \sa{there exists some $\sigma>0$ such that for all $k\geq 0$ it holds that}
					$$\mathbb{P}(\ys{\|w_{k+1}\|} \geq t | \mathcal{F}_k) \leq 2 e^{-\frac{t^2}{2\sigma^2}}, \qquad 
					\forall~t\geq 0.$$
				\end{assump}
				\begin{remark}\label[remark]{remark-doobs}
					\mgb{Note that in \cref{assump-exp-moment-det-additive}, the noise sequence can be \emph{biased}; \sa{i.e.,} $\mathbb{E}[\ys{w_{k+1}} | \mathcal{F}_k]$\ $ = 0$ does \sa{\textit{not}} necessarily hold. For example, when \ys{$w_{k+1}= \tilde \zeta_{k+1} + \tilde{\delta}_k$} where \ys{$\tilde{\zeta}_{k+1}$} is i.i.d. sub-Gaussian, independent from $\mathcal{F}_k$ with variance proxy ${\tilde\sigma}^2$ and mean $\mathbb{E}[\tilde{\zeta}_{k+1} | \mathcal{F}_{k}]= 0$ and $\tilde{\delta}_k$ is a bounded deterministic sequence with the bound $\|\tilde{\delta}_k\|^2\leq M^2$ for some \sa{$M\geq 0$}; then, it is easy to see that \cref{assump-exp-moment-det-additive} holds for \ys{$\sigma^2 = \bar c_0(\tilde{\sigma}^2 + M^2)$, for some large enough constant $\bar c_0 > 0$,}
						where we did not attempt to optimize the choice of $\sigma$. \mgbis{As a special case, if $w := \{w_{k+1}\}_{k\geq 0} \in \ell_2(\mathbb{R}^d)$—that is, if $\{w_{k+1}\}_{k\geq 0}$ is a deterministic square-summable sequence satisfying $\sum_{k\geq 0} \|w_k\|^2 < \infty$—then such a finite $M$ exists and \cref{assump-exp-moment-det-additive} is satisfied.}
					}
				\end{remark}
			Assumption~\ref{assump-exp-moment-det-additive} encompasses a variety of noise models relevant to practical scenarios. 
			The \emph{centered (unbiased) stochastic noise} is commonly encountered in differential privacy, where i.i.d. centered noise is added to gradients to protect user data~\cite{kuru2022diffprivaccopt}. A similar situation arises in stochastic optimization and statistical learning contexts when the objective takes the form $f(x) \coloneqq \mathbb{E}_\xi[\mathcal{L}(x,\xi)]$ for a loss function $\mathcal{L}$, and the expectation is approximated via empirical averaging over mini-batches \cite{bottou2018optimization,bottou2010large}. Under moderate batch sizes, the Central Limit Theorem justifies modeling the resulting gradient noise as Gaussian, a behavior also observed empirically~\cite{bernstein2018signsgd,bernstein2019signsgd}. Consequently, Assumption~\ref{assump-exp-moment-det-additive} is often satisfied in practice. Light-tail conditions such as this are standard in the analysis of stochastic optimization algorithms, particularly for deriving tail bounds and high-probability guarantees. On the other hand, 
			\ys{it is possible 
				\sa{that the gradient noise is \emph{deterministic}, i.e., it contains no stochastic element.}} This situation is typical in incremental gradient methods or in settings where gradients are approximated by solving subproblems inexactly~\cite{daspremont,devolder2013exactness,zhang2022sapd+,gurbuz:2014,gurbuzbalaban2017convergence,gurbuzbalaban2019iag}. Our noise model also captures \emph{deterministic corruption} of gradients at each iteration, which may arise due to low-precision floating-point arithmetic~\cite{gd-with-precision-error}.

			\subsection{Risk-sensitive cost and the risk-sensitive index}

				Let $f\in \Cml$ and consider the iterates $\{\xi_k\}_{k\geq 0}$ of the GMM method \sa{in \eqref{Sys:stoc-RBMM} subject to gradient noise $\{w_{k+1}\}_{k\geq 0}$.} Under Assumption \ref{assump-exp-moment-det-additive}, for a fixed iteration budget $K > 0$, \sa{we define our \textit{risk-sensitive cost} as in \eqref{eq-risk-sensitive-cost-intro}}, i.e.,   
				\begin{equation}\label{def-risk-cost}
					R_{K}(\theta) \coloneqq  \frac{\mgb{\ys{2}\sigma^2}}{ \mgb{(K+1)} \theta}
					\log \mathbb{E} \left[e^{\frac{\theta}{\mgb{2\sigma^2}} S_K} \right], \quad  S_K:= \sum_{k=0}^{K} \left[ f(x_k) - \sa{f_*}\right],
				\end{equation}
				for a real scalar parameter $\theta> 0$, where the expectation is taken with respect to the entire sequence of noise encountered until $K$, assuming that the expectation is finite.  
				{This type of 
					\sa{risk measure} appears frequently in the robust control literature \cite{jacobson1973exponential, whittle1981risk, glover1988state, shaiju2008FormulasFD} \sa{for different choices of $S_K$ depending on the dynamics of interest --here,}} we take the cost function $S_K$ to be cumulative suboptimality over the iterations. \mg{By considering the Taylor series expansion of $R_{K}(\theta)$ with respect to $\theta$, it follows that as $\theta \to \sa{0^+}$, the following limit exists:
					\beq\label{def-risk-param-zero} 
					R_{K}(0):= \lim_{\theta\to \sa{0^+}}R_{K}(\theta)= \E \left(\frac{S_K}{\mgb{K+1}}\right),
					\eeq
					provided that \sa{there exists some $\bar\theta>0$ such that $R_{K}(\theta)$ is finite for $\theta\in(0, \bar\theta)$.} 
					The parameter $\theta$ is called the \emph{risk-sensitivity parameter} \cite{whittle1994risk,whittle2002risk}. The term $S_K$ is a running sum of suboptimality over $K$ iterations; therefore, we see from \eqref{def-risk-param-zero} that when $\theta=0$, the risk-sensitive cost is a measure of average performance $S_K/(K+1)$ \sa{in expectation}. For $\theta>0$, as $\theta$ gets larger, the risk-sensitive cost $R_{K}(\theta)$ penalizes large values of $S_K$ more and more due to the exponential term in \eqref{def-risk-cost} and $R_{K}$ will be more sensitive to large values of 
					suboptimality. Therefore, in analogy with a risk-averse financial investor that tries to avoid large losses, the case $\theta>0$ is called the ``risk-averse case" (see e.g., \cite{glover1988state,whittle1994risk}).
					This can also be readily seen from the Taylor expansion of \eqref{def-risk-cost} around $\theta=0$, \sa{i.e.,} 
					\begin{equation}\label{test}
						R_{K}(\theta) =  R_{K}(0) + \frac{\theta \mgb{(K+1)} }{\ys{4}\sigma^2} \mgb{\mbox{Var} \left(\frac{S_K}{\mgb{K+1}} \right)} + \mathcal{O}(\theta^2). 
					\end{equation}
					We observe that as $\theta>0$ gets larger, \sa{the second term on the right-hand side of~\eqref{test} penalizes 
						\mgb{the variance} of the average suboptimality $S_K/\mgb{(K+1)}$ more and more --note that the variance does not play a role within the $\theta=0$ case.}
					The \textit{risk-sensitive index} is then defined as
					\[
					R(\theta) \coloneqq \mg{\limsup}_{K \to \infty} R_{K}(\theta),
					\]
					\mg{provided \sa{that} the limit exists}. Risk-sensitive cost is well-studied in the robust control literature and is used to quantify the robustness of dynamical systems to stochastic noise \cite{whittle2002risk, DupuisRobustProp, fleming1995risk, pham-risk-sensitive-control}. \ys{In some of our results, we will also consider the $\theta < 0$ case, \sa{i.e., the risk-seeking scenario}.}
				}
					\subsection{$H_\infty$-norm.} {
						By \sa{appropriately} choosing the output map $F(\cdot)$ in~\eqref{Sys:stoc-RBMM}, we \sa{can} obtain different output \sa{sequence $\{z_k\}$ that is relevant to the quantity we would like to analyze for GMM.} In particular, if $F$ is chosen such that  
						\begin{equation}
							\|F(\xi_k)\|^2 = f(x_k) - \sa{f_*},
							\label{eq-def-zk}
						\end{equation}
						for all $k \geq 0$, then the cumulative cost $
						S_K = \sum_{k=0}^K \big(f(x_k) - \sa{f_*}\big),
						$
						can be expressed in terms of the $\ell_2$ norm of the sequence $\{z_k\}$, \sa{i.e., since $z_k=F(\xi_k)$ for $k\geq 0$, we have}
						$
						S_K = \sum_{k=0}^K \|z_k\|^2$ \sa{--for instance,} the condition \eqref{eq-def-zk} is satisfied when the weighting function in \eqref{def-F-output-seq-generator} is chosen as $\overline{w}(z) = \sqrt{z}$. In this case,  when the noise $w_{k+1}$ is deterministic for every \sa{$k\geq 0$} and is square-summable; from the limit \eqref{def-risk-param-zero}, we see that $R_{K}(0)=\frac{S_K}{\mgb{K+1}}$ is also deterministic and corresponds to the average suboptimality. 
						\mgbis{Given the decaying nature of square-summable noise}, a natural relevant question would be whether the \mg{deterministic} limit $S_\infty:=\lim_{K\to\infty}S_K=\sum_{k=0}^\infty\|z_k\|^2$, \mgbis{which captures the cumulative suboptimality over the iterations},
						\sa{exists} \sa{--with the convention that the limit is $+\infty$ if $S_K$ is not bounded.}} 
					
					{The $H_\infty$-norm of a dynamical system, \mgbis{defined below}, provides tight bounds on the quantity \sa{$S_\infty=\lim_{K\to\infty}\sum_{k=0}^K \big(f(x_k) - f_*\big)$} (see \cite{fleming1995risk,glover1988state}), \mgbis{and consequently on the average suboptimality $R_K(0)$ 
							\sa{for $K\in\integers_+$ sufficiently large;} thereby yielding tight bounds for the suboptimality of the averaged iterates \cite{gurbuzbalaban2023robustly}}. Roughly speaking, $H_\infty$-norm is a measure of how much a dynamical system amplifies deterministic noise 
						\sa{in terms of the $\ell_2$-norm} (from input noise sequence to output sequence $z_k$), and is formally defined as follows.\looseness=-1
						\begin{definition}[\cite{tran2017qualitative, lin1996h, van2016l2,fleming1995risk,gurbuzbalaban2023robustly}]\label[definition]{def-Hinfty}
							Consider the purely deterministic setting where the gradient error sequence $w:=\{w_{k+1}\}_{k\geq 0}$ is \textit{deterministic} and \textit{square-summable}.  
							The \sa{GMM} system~\eqref{Sys:stoc-RBMM}, with input sequence (gradient noise) $\{w_{k+1}\}_{k\geq 0} \in \ell_2(\mathbb{R}^d)$ and output sequence $\{z_k\}_{k\geq 0}$ satisfying~\eqref{eq-def-zk}, has $\ell_2$-gain at most $\gamma$ if there exists a function 
							$
							H_\gamma : \mathbb{R}^{2d} \to \mathbb{R}_+,
							$
							with $H_\gamma(\xi_*) = 0$ \sa{and} such that for \textit{every initialization} $\xi_0 \in \mathbb{R}^{2d}$ and every input $w \in \ell_2(\mathbb{R}^d)$, 
							\sa{it holds that}
							\begin{equation}
								\sum_{k\geq 0} \big(f(x_k) - 
								\sa{f_*} \big) 
								= \sum_{k\geq 0} \|z_k\|^2 
								\;\leq\; \gamma^2 \sum_{k\geq 0} \|w_{k+1}\|^2 + H_\gamma(\xi_0).
								\label{def-l2-gain-general}
							\end{equation}
							\mg{In this context, the function $H_\gamma$ is independent of both the disturbance sequence $\{w_{k+1}\}_{k\geq 0}$ and the initialization $\xi_0$, while it may still exhibit dependence on $\gamma$.
							}  
							The $H_\infty$-norm of the \sa{GMM} system is then defined as the minimal such bound, i.e.,
							\begin{equation}
								H_\infty \coloneqq 
								\inf \Bigl\{ \gamma \in \mathbb{R}_{++} \;\Big|\;
								\exists\, H_\gamma \;\text{such that}\;
								H_\gamma(\xi_*) = 0 
								\;\text{and}\;
								\eqref{def-l2-gain-general} \;\text{holds for all}\; 
								w \in \ell_2(\mathbb{R}^d) 
								\Bigr\}.
							\end{equation}
							This quantity is also referred to as the \emph{minimal $\ell_2$-gain} of the system~\eqref{Sys:stoc-RBMM} with output $\{z_k\}_{k\geq0}$.
						\end{definition}
					}

						\mgbis{\sa{The requirement that $H_\gamma(\xi_*)=0$ ensures \eqref{def-l2-gain-general} providing tight bounds} in the absence of noise. Specifically, if \sa{$w_{k+1}=0$ for all $k\geq 0$} and $\xi_0=\xi_*$, then $x_0=x_*$ and \sa{since $x_*$ is a fixed point of the GMM iterations, this initialization} implies $\sum_{k\geq 0} \big(f(x_k)-
							\sa{f_*}\big)=0$. In this case, the inequality \eqref{def-l2-gain-general} reduces to an equality with both sides equal to zero. 
							\sa{For} linear systems, there exist alternative yet equivalent definitions of the $H_\infty$-norm, either based on initialization at a fixed point $\xi_0=\xi_*$ or 
							\sa{stated} in the frequency domain \cite{zhou1996robust}. In this work, we adopt a more general definition that applies to both linear and nonlinear systems, since the GMM system can be nonlinear for $f \in \Cml$ \sa{that is not a quadratic function.}\looseness=-1}

						The $H_\infty$-norm quantifies the worst-case \sa{deterministic noise} amplification of the system in~\eqref{Sys:stoc-RBMM}, \sa{i.e., it quantifies how the output behaves in response to} the noise input $\{w_{k+1}\}_{k\geq 0}$. A smaller $H_\infty$ value indicates a more robust system, as the output is less sensitive to worst-case deterministic square-summable input noise; consequently, the cumulative suboptimality over the iterations, i.e., $\sum_{k\geq 0} \big(f(x_k)-
						\sa{f_*}\big)$, will also be smaller for the same noise level.
						For GMM, $H_\infty$-norm depends on the 
						\sa{algorithm} parameters $(\alpha,\beta,\nu)$ and the \sa{problem parameters $L$ and $\mu$, i.e., smoothness and convexity constants, respectively}. For strongly convex quadratics, it is known that $H_\infty$-norm can be computed exactly. The following result is a direct consequence of \cite[Thm.~4.1]{gurbuzbalaban2023robustly} together with \cite[eqns.~(60)--(61)]{gurbuzbalaban2023robustly}. 
					}
					
					
					\begin{theorem}[\cite{gurbuzbalaban2023robustly}]\label[theorem]{thm-h-inf}
						\mg{Consider a quadratic function $f \in \mathcal{C}_\mu^L(\mathbb{R}^d)$ \mg{with a Hessian matrix $Q\in\mathbb{R}^{d\times d}$}. 
							\sa{Suppose that with the given parameters $(\alpha, \beta, \nu)$, 
								the GMM dynamics exhibit} global linear convergence \sa{to the unique fixed point of the system $\xi_*$} in the absence of gradient noise, 
							specifically when $w_{k+1} = 0$  in \eqref{Sys:stoc-RBMM} \sa{for all $k\geq 0$}. Under these conditions, the $H_\infty$-norm (
							\sa{as given} in \cref{def-Hinfty}) of the GMM is} 
						\begin{equation}\label{hinfty-quad}
							H_\infty
							=  \mg{\frac{\alpha}{\sqrt{2}} \max_{1\leq i \leq d}  \frac{ \sqrt{\lambda_i}}{\tilde{s}_{\lambda_i}}}	= \frac{\alpha}{\sqrt{2}} \cdot
							\max_{\lambda \in \{ \mu, L\} }  \frac{ \sqrt{\lambda}}{{\tilde{s}_\lambda}}\,,
						\end{equation}
						where \mg{$\mu= \lambda_1 \leq \lambda_2 \leq \dots \lambda_d = L$ are the eigenvalues of $Q$ in a non-decreasing order}; \sa{moreover,} 
						\beq {\tilde{s}_\lambda}: = \begin{cases} | 1-\tilde{c}_\lambda | ~ \sqrt{1 - \frac{{\tilde{b}_\lambda}^2}{4{\tilde{c}_\lambda}}} & \mbox{if   } {\tilde{c}_\lambda}>0 \mbox{ and } \frac{|{\tilde{b}_\lambda}| (1+{\tilde{c}_\lambda})}{4{\tilde{c}_\lambda}} < 1,\nonumber \\
							\big| |1+{\tilde{c}_\lambda}| - |{\tilde{b}_\lambda}| \big| & \mbox{otherwise},
						\end{cases} 
						\label{def-r-lambda}	
						\eeq	  
						with 	
						\beq \tilde{b}_\lambda :=\alpha\lambda (1+\nu)-(1+\beta), \quad {\tilde{c}_\lambda} := \beta - \alpha \lambda \nu.
						\label{def-b-c-lambda}
						\eeq
						\sa{Finally, it always holds that} $\mg{H_\infty}\geq \frac{1}{\sqrt{2\mu}}$.
					\end{theorem}
					For general strongly convex functions that are not quadratic, exact analytical expressions are typically unavailable. Nevertheless, tight upper bounds have been derived in~\cite{gurbuzbalaban2023robustly} \mgbis{for GD, NAG, and more generally for GMM,} by verifying the feasibility of certain $4 \times 4$ matrix inequalities. For completeness, we present these bounds in \cref{thm-hinfty-bound} \ys{in the appendix}. In the context of stochastic dynamical systems driven by Gaussian noise, it is known that risk sensitivity is closely related to the $H_\infty$-norm: specifically, the largest value of $\theta > 0$ for which the risk-sensitive index $R(\theta)$ remains finite depends on the system's $H_\infty$-norm~\cite{fleming1995risk,glover1988state}. To the best of our knowledge, however, such connections have not been explored in the setting of optimization algorithms driven by noise models such as those described in \cref{assump-exp-moment-det-additive}. To investigate this connection further and gain insight via precise analytical formulas, the next section focuses on the special case of \sa{strongly convex} quadratic objectives subject to \ys{unbiased (centered) i.i.d. Gaussian noise}. This serves as a foundation before extending the analysis to general \sa{smooth} strongly convex functions under the broader noise model of \cref{assump-exp-moment-det-additive}.
					
					
					\section{Exact analysis of $R(\theta)$ for strongly convex quadratics}\label{sec-quad}
					We now consider the special case 
					\sa{with $f$ being} a strongly convex quadratic 
					of the form
					\beq\label{eq-quad}
					f(x):= \frac{1}{2}x^\top Q x + \mg{g}^\top  x + \mg{h},
					\eeq    
					where $Q\in \mathbb{R}^{d\times d}$ is a \sa{symmetric} positive-definite matrix, $\mg{g}\in \mathbb{R}^d$ 
					and $\mg{h}\in\mathbb{R}$. In this case, the system defined in \eqref{Sys:stoc-RBMM} reduces to a linear dynamical system \mg{which can be studied further \sa{by providing some closed form expressions about its dynamics}.} 
					First, we will rewrite these iterations as a linear dynamical system. For this purpose, we consider the eigenvalue decomposition \begin{equation} Q = U \Lambda U^\top, 
						\label{def-U-matrix}
					\end{equation} 
					where \sa{$\Lambda=\diag(\lambda)$} is a diagonal matrix containing eigenvalues of $Q$ in increasing order, i.e., $\sa{\Lambda_{ii}} = \lambda_i$ \sa{for $i=1,\ldots,d$ such that}
					$\sa{0<}\mu = \lambda_1 \leq \lambda_2 \leq \dots \leq \lambda_d = L$ are the eigenvalues of $Q$. 
					\sa{Note that}
					\begin{equation}  f(x_k) - f(x_*) = \frac{1}{2} (x_k - x_*)^\top  Q(x_k - x_*) =  \frac{1}{2} (x_k - x_*)^\top  (U\Lambda U^\top ) (x_k - x_*);
						\label{eq-f-with-eigdeg}
					\end{equation}
					\sa{hence, we have $\grad f(x)=Q(x-x_*)$, implying that}
					\beq 
					\nabla f(C\xi_k) = Q(C\xi_k - x_*) = QC \xi^c_k, \quad \mbox{where} \quad \xi_k^c:= \xi_k - \xi_* \quad \mbox{with} \quad \xi_* = \begin{bmatrix} x_* \\ x_*\end{bmatrix}.
					\label{def-centered-iter}
					\eeq
					Here, the superscript $``c"$ is to highlight that these iterates are shifted to be ``centered" around the \sa{optimal point $x_*$}, i.e., if GMM \sa{iterate sequence} converges to \sa{$x_*$}, 
					\sa{then} $\xi_k^c\to 0$. 
					
					Based on \eqref{def-centered-iter}, we can rewrite \eqref{Sys:stoc-RBMM} as	
					\begin{eqnarray} \label{Sys:quad-deterministic-noise}
						{\xi}^c_{k+1} &=&A_Q {\xi}^c_{k} +  Bw_{\ys{k+1}},
					\end{eqnarray}
					\sa{where $A_Q:= A+BQC$; therefore, using the definitions of $A,B$ and $C$ given in \eqref{def-ABC}, we have}
					\beq 
					A_Q:=\begin{bmatrix} 
						(1+\beta)\mathsf{I}_d -\alpha(1+\nu)Q & -\beta \mathsf{I}_d +\alpha\nu Q \\
						\mathsf{I}_d & 0_d 
					\end{bmatrix}.
					\label{def-AQ}
					\eeq
					\sa{For this setting, we set the map $F$ in \eqref{Sys:stoc-RBMM} 
						such that $F(\xi_k) \coloneqq T \xi_k^c$, this leads to the output}
					\beq
					z_k = T \xi_k^c  \quad \mbox{with} \quad T \sa{:=}\begin{bmatrix} \frac{1}{\sqrt{2}} \Lambda^{1/2} U^\top  \quad  0_d \end{bmatrix};
					\label{def-T}
					\eeq
					\sa{and, from \eqref{eq-f-with-eigdeg}, it can be seen that $z_k$ satisfies} the identity \eqref{eq-def-zk}. {Therefore, based on \cref{def-l2-gain-general}, we can talk about the $H_\infty$-norm of the GMM system and the $H_\infty$ formula from \cref{thm-h-inf} is applicable. {The $H_\infty$-norm characterizes robustness in the hypothetical setting where the noise is fully deterministic. In \cref{assump-exp-moment-det-additive}, we consider \sa{a more general noise model}; however, if the noise specified therein were further restricted to be deterministic and square-summable, the $H_\infty$-norm would quantify robustness with respect to the worst-case impact of such noise.}}
					
					The GMM dynamics, as defined by \eqref{Sys:quad-deterministic-noise}--\eqref{def-T}, constitute a linear dynamical system identified by the system matrices $(A_Q, B, T)$. In this context, in the special case when the noise sequence \ys{$\{w_{k+1}\}_{k\geq 0}$ \sa{consists of i.i.d. 
							multivariate Gaussian vectors such that $w_{k+1} \sim \mathcal N(0,\ys{\sa{\frac{\sigma^2}{d}{\mathsf{I}_d}}})$ for some $\sigma>0$}}, then the noise variance is $\sigma^2$ and \cref{assump-exp-moment-det-additive} is satisfied. In this case, it is known that the risk-sensitive index $R(\theta)$ is finite if and only if $\sqrt{\theta}H_\infty <\mgb{\sqrt{d}}$ \cite{glover1988state}; \sa{indeed, a particular form of a discrete-time algebraic Riccati equation (DARE), i.e., 
						\begin{equation}
							X = A_Q^\top  X A_Q + A_Q^\top  X B (\gamma^2 \mathsf{I}_d - B^\top  X B)^{-1} B^\top  X A_Q + T^\top T, \label{dare}
						\end{equation}
						admits a positive semi-definite solution $X \succeq 0 $  if and only if $ \gamma> H_\infty $ \cite[Corollary 2.1, Prop. 3.7]{hinrichsen1991stability} --thus, $\sqrt{\theta}H_\infty <\sqrt{d}$ holds if and only if \eqref{dare} has a positive semi-definite solution for the parameter choice $\gamma=\frac{1}{\sqrt{\theta/d}}$} (see \cite[Lemma 3.4]{stoorvogel1993discrete} and \cite{glover1988state}).
					\mg{
						To compute a solution $X \in \mathbb{R}^{2d \times 2d}$ for \sa{the DARE above}, a standard method involves tackling a $4d \times 4d$ generalized eigenvalue problem. More specifically, it necessitates the calculation of the generalized eigenvalues and eigenvectors of the following $4d \times 4d$ matrix pencil:}
					\beq M(\tilde\lambda) = \tilde\lambda \begin{bmatrix}
						\mathsf{I}_\mgbis{2d}  & \sa{- BB^\top/\gamma^2}  \\
						0_\mgbis{2d}  &  A_Q^\top  
					\end{bmatrix} -
					\begin{bmatrix}
						A_Q & 0_\mgbis{2d}\\
						-T^\top T & \mathsf{I}_\mgbis{2d}
					\end{bmatrix},
					\label{def-matrix-pencil}
					\eeq
					\mg{i.e., \sa{one needs to compute 
							a pair $(\tilde{\lambda},\tilde v)$ such that} $M(\tilde\lambda) \tilde{v} = 0$, and then a solution $X$ to the DARE \sa{in~\eqref{dare}} can be expressed in terms of $\tilde{\lambda}$ and $\tilde{v}$ (see, \sa{e.g.,} \cite[eqn. (6)-(8)]{arnold1984generalized}). This can be computationally expensive when $d$ is large, requiring at least $\mathcal{O}(d^3)$ operations. However, in the case of GMM methods, the matrices $A_Q$, $B$, and $T$ have some special block structure. In the next result, we exploit this structure and provide an explicit formula for the risk-sensitive index $R(\theta)$ in terms of solutions to $2\times 2$ DARE 
						\sa{systems} when the input noise is i.i.d. Gaussian with covariance $\ys{\frac{\sigma^2}{d}} \mathsf{I}_d$ for some $\sigma>0$. With this scaling of the noise covariance matrix, the noise variance is $\sigma^2$ and \cref{assump-exp-moment-det-additive} is satisfied with variance (proxy) $\sigma^2$.}  
					\mgbis{The proof is deferred to the appendix. To establish this result, we first apply a scaling argument by modifying the $B$ matrix in \eqref{dare} to $B\sigma$. We then exploit the block-diagonalizability of the matrix $A_Q$ into $2 \times 2$ blocks, together with the uniform block structure of $B$. The main difficulty lies in proving the existence of solutions to the reduced DARE system and in quantifying how these solutions can be mapped to those of the full $2d \times 2d$ system via suitable permutation matrices and linear transformations.}
					\begin{theorem}\label[theorem]{thm-gmm-risk-formula} Let $f$ be a quadratic of the form \eqref{eq-quad}. \ys{Assume that the gradient noise follows an isotropic Gaussian distribution, i.e., Assumption \ref{assump-exp-moment-det-additive} holds for an i.i.d. sequence $\{w_{k+1}\}_{k\geq 0}$ with $w_{k+1} \sim \mathcal N(0,\ys{\sa{\frac{\sigma^2}{d}\mgb{\mathsf{I}_d}}})$ for some $\sigma>0$}. Let the parameters $(\alpha, \beta, \nu)$ of GMM be such that $\rho(A_Q)<1$ \sa{for $A_Q$ defined in \eqref{def-AQ}}. \sa{For 
							$\lambda>0$ given,} consider the following $2\times 2$ DARE,
						\begin{equation} \tilde{X} 
							\sa{=} (\tilde{A}^{(\lambda)})^\top  \tilde{X} \tilde{A}^{(\lambda)} +   (\tilde{A}^{(\lambda)})^\top  \tilde{X}
							\tilde{B}(\gamma^2  - \tilde{B}^\top  \tilde{X}\tilde{B})^{-1}\tilde{B}^\top 
							\tilde{X}
							\tilde{A}^{(\lambda)}
							+\begin{bmatrix} \frac{\lambda}{2}& 0 \\
								0 & 0
							\end{bmatrix},
							\label{eq-small-riccati-bis}
						\end{equation}
						where $\gamma = \frac{1}{\sqrt{\theta/d}}$, $\tilde B\in\mathbb{R}^{2\times 1}$ is \sa{given} in $\eqref{def: system mat for TMM}$ and $$ \tilde{A}^{(\lambda)}:= \begin{bmatrix}
							1+\beta - \alpha(1+\nu)\lambda & -\beta + \alpha\nu \lambda \\
							1 & 0
						\end{bmatrix} \in \mathbb{R}^{2\times 2}.$$
						The risk-sensitive index of the GMM method with risk parameter $\theta>0$ is given by 
						\beq R(\theta) = 
						\begin{cases} 
							-\frac{\mgb{\sigma^2}}{\theta} \sum_{i=1}^d \log (1 - \frac{\theta}{d} \alpha^2 \tilde{X}^{(\lambda_i)}_{11}) & \mbox{if} \quad \sqrt{\theta} \mg{H_\infty} < \mgb{\sqrt{d}}, \\
							\infty  & \mbox{if} \quad \sqrt{\theta }\mg{H_\infty} \geq \mgb{\sqrt{d}},
						\end{cases} 
						\label{risk-index-quadratics}
						\eeq
						where \mg{$\{\lambda_i\}_{i=1}^d$ are the eigenvalues of $Q$}, $\mg{H_\infty}$ is given by \eqref{hinfty-quad}, and $\tilde{X}^{(\lambda_i)} \in \mathbb{R}^{2\times 2}$ is the stabilizing solution to \eqref{eq-small-riccati-bis} for $\lambda=\lambda_i$,
						i.e., $\tilde{X}^{(\lambda_i)}$ is positive semi-definite solving \eqref{eq-small-riccati-bis} for $\lambda=\lambda_i$ and satisfies $ \rho(\tilde{A}^{(\lambda_i)} + \gamma^{-2} \tilde B {\tilde B}^\top  \tilde{X}^{(\lambda_i)} ) < 1$.
					\end{theorem}
					\begin{proof} \mg{The proof is given in \cref{sec-proof-theorem-risk-quadratic}}.
					\end{proof}
					\mgc{\begin{remark}Theorem \ref{thm-gmm-risk-formula} relies on Riccati equations for providing a formula for the risk-sensitive index and applies only when $\theta>0$. Later on, in \cref{proposition-rate-function} and \cref{coro-risk-index-as-onedim-integral}, we will provide an alternative approach based on the evaluation of an integral that applies to the $\theta\leq 0$ case as well. \end{remark}}
					We note that the solutions to the $2\times 2$ Riccati equations \sa{in~\eqref{eq-small-riccati-bis} 
						of} Theorem \ref{thm-gmm-risk-formula} can be computed by a similar approach to \eqref{def-matrix-pencil}. \sa{For a given parameter $\lambda>0$,} it would suffice to compute the generalized eigenvalues and eigenvectors of the following $4\times 4$ pencil:
					$$ 
					\sa{M^{(\lambda)}(\mgbis{\tilde\mu}):=}\mgbis{\tilde\mu} \begin{bmatrix}
						\mathsf{I}_2  & -\ys{\tilde{B}\tilde{B}^\top/\gamma^2}\\
						0_2  & ({\tilde{A}^{(\lambda)}})^\top 
					\end{bmatrix} -
					\begin{bmatrix}
						{\tilde{A}^{(\lambda)}} & 0_2\\
						-\tilde{Q}^{(\lambda)} & \mathsf{I}_2
					\end{bmatrix}, \quad
					\mbox{where}
					\quad \tilde{Q}^{(\lambda)}  = \begin{bmatrix} \frac{\lambda}{2}& 0 \\
						0 & 0
					\end{bmatrix}.$$
					{That is, given the eigenvalues $\{\lambda_i\}_{i=1}^d$ of $Q$, one must compute pairs $(\tilde\mu_i,\tilde v_i)$ satisfying $M^{(\lambda_i)}(\tilde\mu_i)\tilde v_i$ $ = 0$ for all $i = 1,\ldots,d$.\looseness=-1}
					{Another approach is to solve \eqref{eq-small-riccati-bis} directly, which involves three unknowns—the entries of the symmetric $2 \times 2$ matrix—and three polynomial equations in these variables of degree at most four.} For instance, the Symbolic Toolbox of {\sc Matlab} can be used to express the solutions with an explicit formula. 
					\sa{That being said, 
						for gradient descent (GD)}, we can also compute the solutions explicitly by hand and obtain a formula for the risk-sensitive index which is provided in the next result.
					
					\begin{corollary}\label[corollary]{risk-sensitivity-index-GD-quad-f} 
						\sa{Under the premise} of \cref{thm-gmm-risk-formula},  
							\sa{given a step size $\alpha \in (0,\frac{2}{L})$, consider the gradient descent method, i.e., $\beta=\nu=0$, the risk-sensitive index corresponding to the risk  parameter} $\theta>0$ is given by 
							\begin{equation}
								R(\theta) =  \begin{cases}
									-\frac{\sigma^2}{\theta} \sum_{i=1}^d \log\left(
									1 -  \mgb{\frac{\theta}{d}} \alpha^2 \tilde{a}_{i} 
									\right)	& \mbox{if} \quad \sqrt{\theta}  H_\infty  < \mgb{\sqrt{d}}; \\
									\infty 	&\mbox{if} \quad \sqrt{\theta} H_\infty  \geq \mgb{\sqrt{d}},
								\end{cases}
								\label{eq-rtheta-to-prove}
							\end{equation}
							with 
							\mg{\beq H_{\infty} = \begin{cases} 
									\frac{1}{\sqrt{2\mu}} & \text{if}\quad 0 < \alpha \leq \frac{2}{L + \sqrt{L\mu}}; \\
									\frac{\alpha \sqrt{L}}{\sqrt{2(2 - \alpha L)}} & \text{if}\quad \frac{2}{L + \sqrt{L\mu}} < \alpha < \frac{2}{L},
								\end{cases}\label{eq-hinfty-gd-formula}
								\eeq}
							and
							\beqs
							\tilde{a}_{i} := 
							\frac{1}{2}
							\left(
							\frac{\lambda_i}{2}+c_{\scriptscriptstyle \mathrm{GD}}\bigg( 1-(1-\alpha\lambda_i)^2\bigg) 
							- 
							\sqrt{
								\bigg(\frac{\lambda_i}{2}+c_{\scriptscriptstyle \mathrm{GD}}\bigg( 1-(1-\alpha\lambda_i)^2\bigg)
								\bigg)^2 
								-2c_{\scriptscriptstyle \mathrm{GD}}\lambda_i
							}
							\right),
							\eeqs
							if $1-\alpha\lambda_i \neq 0$ and
							$\tilde{a}_{i}  := \frac{\lambda_i}{2}$ if $1-\alpha\lambda_i=0$, where $ c_{\scriptscriptstyle \mathrm{GD}} := \frac{d}{\alpha^2 \theta}$ and $\{\lambda_i\}_{i=1}^d$ are the eigenvalues of $Q$.\looseness=-1
						\end{corollary} 
						\begin{proof} The proof 
							\sa{follows from} \cref{thm-gmm-risk-formula} 
							\sa{through constructing} an explicit solution $\tilde{X}^{(\lambda_i)}$ to the DARE \sa{in} \eqref{eq-small-riccati-bis} for $\lambda=\lambda_i$ and $i=1,2,\dots,d$. \sa{Let $i\in\{1,\ldots,d\}$ be fixed.} For gradient descent, we have $\beta=\nu=0$; \sa{hence,} 
							$$ \tilde{A}^{(\lambda_i)} = \begin{bmatrix} 1-\alpha \lambda_i & 0 \\
								\sa{1}& 0
							\end{bmatrix}, \quad 
							\tilde{B}=
							\begin{bmatrix} 
								-\alpha \\ 
								0
							\end{bmatrix}.
							$$
							\sa{The sparsity structure of $\tilde{A}^{(\lambda_i)}$ implies that the solution $\tilde{X}^{(\lambda_i)}$ has the following form:}
							\beq \tilde{X}^{(\lambda_i)} = \begin{bmatrix} \tilde{a}_{i}  & 
								\sa{0}\\
								\sa{0} & 
								\sa{0}
							\end{bmatrix},\label{tilde-X}
							\eeq
							for 
							\sa{some} non-negative scalar $\tilde{a}_{i} $ (that may depend on $\lambda_i$). It follows after a straightforward computation that such an $\tilde{X}^{(\lambda_i)} $ is a solution to \eqref{eq-small-riccati-bis} for $\gamma = \frac{1}{\sqrt{\theta/d}}$ and $\lambda=\lambda_i$ if $\tilde{a}_{i} $ satisfies 
							$$\tilde{a}_{i} = (1-\alpha \lambda_i)^2 \tilde{a}_{i}  +  (1-\alpha \lambda_i)^2 {\tilde{a}_{i}}^2 \left(\frac{d}{\theta \alpha^2 }  - {\tilde{a}_{i}}\right)^{-1}
							+ \frac{1}{2}\lambda_i. $$
							If $1-\alpha\lambda_i = 0$, then we have ${\tilde{a}_{i}} = \frac{1}{2}\lambda_i$; \sa{otherwise, if $1-\alpha\lambda_i \neq 0$,} then
							this is a scalar quadratic equation with \sa{solutions of the form:}
							\beqs 
							\tilde{a}_{i,\pm} = \frac{1}{2}
							\left(
							\frac{\lambda_i}{2}+c_{\scriptscriptstyle \mathrm{GD}}\bigg( 1-(1-\alpha\lambda_i)^2\bigg) 
							\pm
							\sqrt{
								\bigg(\frac{\lambda_i}{2}+c_{\scriptscriptstyle \mathrm{GD}}\bigg( 1-(1-\alpha\lambda_i)^2\bigg)
								\bigg)^2 
								-2c_{\scriptscriptstyle \mathrm{GD}}\lambda_i
							}
							\right)\,
							\eeqs
							where $ \sa{c_{\scriptscriptstyle \mathrm{GD}}} = \frac{d}{\alpha^2 \theta}$.
							Since we are interested in the stabilizing solution of the Riccati equation of \eqref{eq-small-riccati-bis}, i.e., the solution $\tilde{X}^{(\lambda_i)}$ that satisfies $ \rho(\tilde{A}^{(\lambda_i)} + \gamma^{-2} \tilde{B} \tilde{B}^\top  \tilde{X}^{(\lambda_i)}) < 1$, we take $\tilde{a}_{i}  = \tilde{a}_{i,-}$. Then, with such choice of $\tilde{a}_{i} $, the matrix $\tilde{X}^{(\lambda_i)}$ is a stabilizing solution of the Riccati equation in \eqref{eq-small-riccati-bis}, and we conclude from \cref{thm-gmm-risk-formula} \mg{that the formula in \eqref{eq-rtheta-to-prove} holds with $H_\infty$, \sa{given in \eqref{hinfty-quad}; moreover, for GD,} since we have $\beta=\nu=0$, the formula in~\eqref{hinfty-quad} simplifies to \eqref{eq-hinfty-gd-formula}, and we conclude.}    
						\end{proof}
						
						\subsection{Numerical experiments comparing risk, convergence rate, and parameter choice}
						
						
						In this section, we investigate the relationship among risk-sensitive cost, parameter selection, and convergence rate in common parameterizations of GMMs from a numerical perspective. See \cref{tab:algo-params} \mgb{for a list of common parameters which covers HB, NAG and GD methods. The last column of this table provides the linear convergence rate \sa{$\rho\in(0,1)$ associated with the stated parameters as a function of the condition number $\kappa:=L/\mu$ when there is no noise}.\footnote{If $w_{k+1} = 0$ for every $k$; then $\rho^2 \in [0,1)$ is such that $f(x_k)-f(x_*)\leq \bar c_k\rho^{2k}[f(x_0) - f(x_*)]$ where $\bar c_k$ has at most polynomial growth, see e.g., \cite{can2022entropic}. \mgbis{Here, we abuse the notation and use $\rho$ to denote both the spectral radius of a matrix and the convergence rate, similar to the literature \cite{lessard2016analysis,gurbuzbalaban2023robustly}}.}
						} 
						
						\begin{table}[htbp]\centering
							\caption{Common parameterizations of GMMs. }
							\label{tab:algo-params}
							\setlength{\tabcolsep}{1pt}      
							\renewcommand{\arraystretch}{1.5} 
							{\small
								\begin{tabular}{|>{\centering\arraybackslash}m{1.5cm}
										|>{\centering\arraybackslash}m{5.75cm}
										|>{\centering\arraybackslash}m{4.2cm}
										|>{\centering\arraybackslash}m{3cm}|}
									\hline
									\textbf{Alg.} & \textbf{Parameters} & \textbf{Comments} & \textbf{Conv. Rate (\(\rho\)) without noise} \\
									\hline
									GD-pop & \(\alpha = \frac{1}{L},\; \beta = \nu = 0\) & Popular default choice \cite{lessard2016analysis}. & \(1 - \frac{1}{\kappa}\) \cite{lessard2016analysis}\\
									\hline
									GD-fastest & \(\alpha = \frac{2}{L+\mu},\; \beta = \nu = 0\) & Fastest rate without noise \cite{lessard2016analysis}. & \(1 - \frac{2}{\kappa+1}\) \cite{lessard2016analysis} \\
									\hline
									RS-GD & \(\alpha = \frac{2}{L+\sqrt{L\mu}},\; \beta = \nu = 0\) & Fastest rate while achieving optimal \(H_\infty\) \cite{gurbuzbalaban2023robustly}. & \(1 - \frac{2}{\kappa+\sqrt{\kappa}}\) \cite{gurbuzbalaban2023robustly} \\
									\hline
									NAG-pop & \(\alpha = \frac{1}{L},\; \beta = \nu = \frac{1-1/\sqrt{\kappa}}{1+1/\sqrt{\kappa}}\) & Popular Nesterov setting \cite{lessard2016analysis}. & \(1 - \frac{1}{\sqrt{\kappa}}\) \cite{lessard2016analysis} \\
									\hline
									NAG-fastest & \(\alpha = \frac{4}{3L+\mu},\; \beta = \nu = \frac{\sqrt{3\kappa+1} - 2}{\sqrt{3\kappa+1} + 2}\) & Fastest known rate for quadratics \cite{lessard2016analysis}. & \(1 - \frac{2}{\sqrt{3\kappa+1}}\) \cite{lessard2016analysis}\\
									\hline
									NAG-beta-opt & \(\alpha \in \left(0, \frac{1}{L}\right],\; \beta = \nu = \frac{1 - \sqrt{\alpha \mu}}{1 + \sqrt{\alpha \mu}}\) & Tunable \(\alpha\) with matching \(\beta\) optimizing \(\rho\) \cite[Lemma 2.1]{aybat2019universally}. & \(1 - \sqrt{\alpha \mu}\) \cite{aybat2019universally}\\
									\hline
									TMM & \(\alpha = \frac{1+\rho}{L},\; \beta = \frac{\rho^2}{2 - \rho},\; \nu = \frac{\rho^2}{(1+\rho)(2 - \rho)}\) & Proposed in \cite{scoy-gmm-ieee}. & \(1 - \frac{1}{\sqrt{\kappa}}\) \cite{scoy-gmm-ieee} \\
									\hline
									HB & ~~\(\alpha = \frac{4}{(\sqrt{L} + \sqrt{\mu})^2},\; \beta = \left(\frac{\sqrt{\kappa} - 1}{\sqrt{\kappa} + 1}\right)^2, \nu = 0\)\; & Fastest rate without noise \cite{lessard2016analysis}. & \(1 - \frac{2}{\sqrt{\kappa}+1}\) \cite{lessard2016analysis} \\
									\hline
									RS-HB & \;\(\alpha = \frac{a^2(\kappa)}{L},\; \beta = \left(1 - \frac{a(\kappa)}{\sqrt{\kappa}}\right)^2, \nu=0 \)\; & Proposed in \cite{gurbuzbalaban2023robustly}. & \(1 - \frac{\sqrt{2}}{\sqrt{\kappa}} + \mathcal{O}\left(\frac{1}{\kappa \sqrt{\kappa}}\right)\) as \(\kappa \to \infty\) \cite{gurbuzbalaban2023robustly}\\
									\hline
								\end{tabular}
							}
						\end{table}
						
						We focus on a simple strongly convex quadratic problem of the form \eqref{eq-quad} with $d = 2, L = 3, \mu = 1, \sa{g = [0,~0]^\top}, \mg{h} = 0, Q = \begin{bmatrix} \mu & 0 \\ 0 & L \end{bmatrix}$. \sa{We 
							set $\sigma^2 = 2$, and we conduct the experiments for various values of $\theta>0$ chosen appropriately so that \mgb{$\sqrt{\theta}H_\infty < \sqrt{d}=\sqrt{2}$} --otherwise \sa{$R(\theta)$} would be infinite by \cref{thm-gmm-risk-formula}.} 
						
						\begin{figure}[h!]
							\centering
							\includegraphics[width=0.75\linewidth]{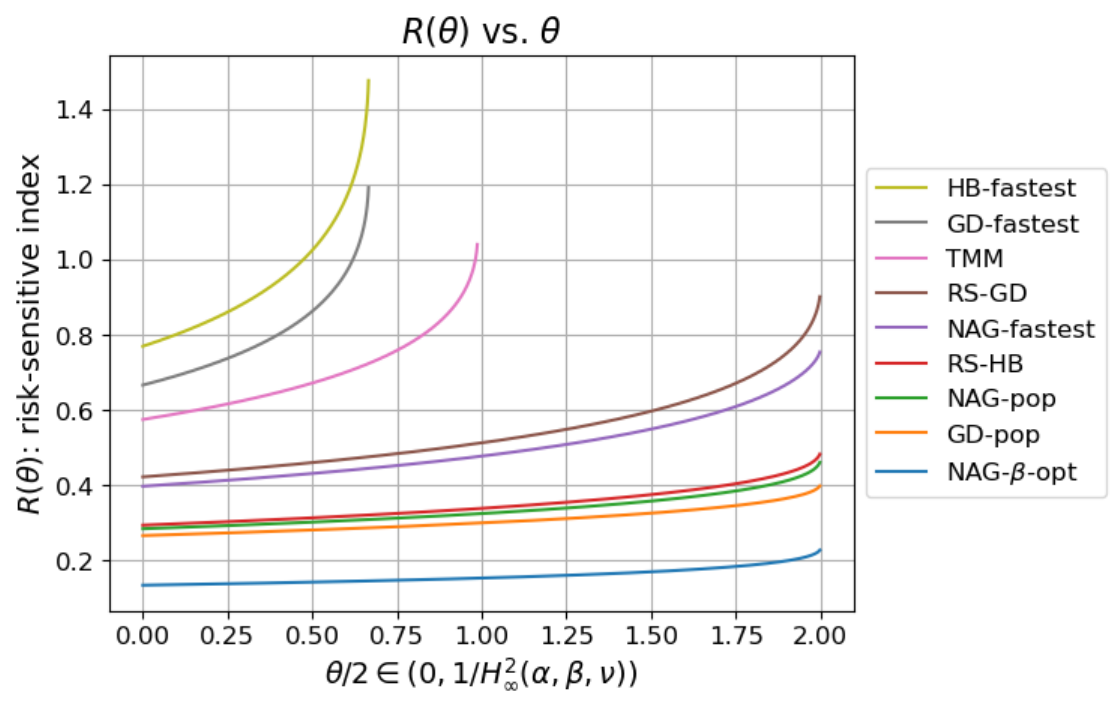}
							\caption{Risk sensitivity for common parameterizations of GMM methods; $d = 2, L = 3, \mu = 1, \sigma^2 = 2$.}
							\label{fig:risk-vs-theta}
						\end{figure}
						\mgc{In \cref{fig:risk-vs-theta}, we plot $R(\theta)$ for several GMM parameterizations, computed analytically with the DARE-based formula given in \cref{thm-gmm-risk-formula}. This formula is valid for $\theta \in \left(0, \tfrac{d}{H_\infty^2}\right)$, and $R(\theta)$ diverges when $\theta \geq \bar{\theta} := \tfrac{d}{H_\infty^2}$; \sa{hence, we truncated} each curve in \cref{fig:risk-vs-theta} around $\bar{\theta}$.}
						
						\sa{Certain 
							choice of parameters} offer more flexibility in $\theta$ selection than others, \mgb{as the $H_\infty$-norm depends on \sa{these} parameters}. 
						\mgb{We observe from \cref{fig:risk-vs-theta} that methods achieving the  fastest convergence rates \sa{(smallest $\rho$)}—such as HB-fastest, GD-fastest, TMM, and NAG-fastest—often exhibit the highest risk and the least flexibility (as they admit a larger $H_\infty$-norm).}
						
						{For NAG, we compare three step sizes:
							$\alpha = \frac{4}{3L + \mu}$ (NAG-fastest), $\alpha = \frac{1}{L}$ (NAG-pop), and $\alpha = \frac{1}{2L}$ (NAG-beta-opt) \sa{--since $L>\mu$, $\frac{4}{3L + \mu}>\frac{1}{L}>\frac{1}{2L}$,}
							\sa{each choice of $\alpha$} is paired with the momentum parameter
							$\sa{\beta=\nu} = \frac{1 - \sqrt{\alpha \mu}}{1 + \sqrt{\alpha \mu}}$ \sa{--this choice is in line with NAG-pop and NAG-fastest parameter choice stated in \cref{tab:algo-params}}.
							All variants achieve the accelerated rate $\rho = 1 - \Theta\left(\frac{1}{\sqrt{\kappa}}\right)$, with step size affecting the constant factor. Notably, smaller step sizes yield slower convergence but significantly lower risk, illustrating the trade-off between rate and risk sensitivity.
							A similar pattern holds for gradient descent (GD), where we compare three step sizes:
							$\alpha = \frac{2}{L+\mu}$ (GD-fastest), $\alpha = \frac{2}{L+\sqrt{L\mu}}$ (RS-GD), and $\alpha = \frac{1}{L}$ (GD-pop).  As the step size decreases, the convergence rate slows, \sa{i.e., corresponding $\rho$ value increases, but the corresponding risk sensitivity measured by the risk-sensitive index $R(\theta)$} improves and the 
							\sa{$R(\theta)$} is finite for a wider range of \sa{$\theta>0$}. 
							This highlights a fundamental trade-off: improved flexibility in choice of $\theta$ and lower risk sensitivity come at the expense of slower convergence.}
						\begin{figure}[h!]
							\centering
							\subfigure[]{\label{GD-hinfty-stepsize}\includegraphics[width=0.49\linewidth]{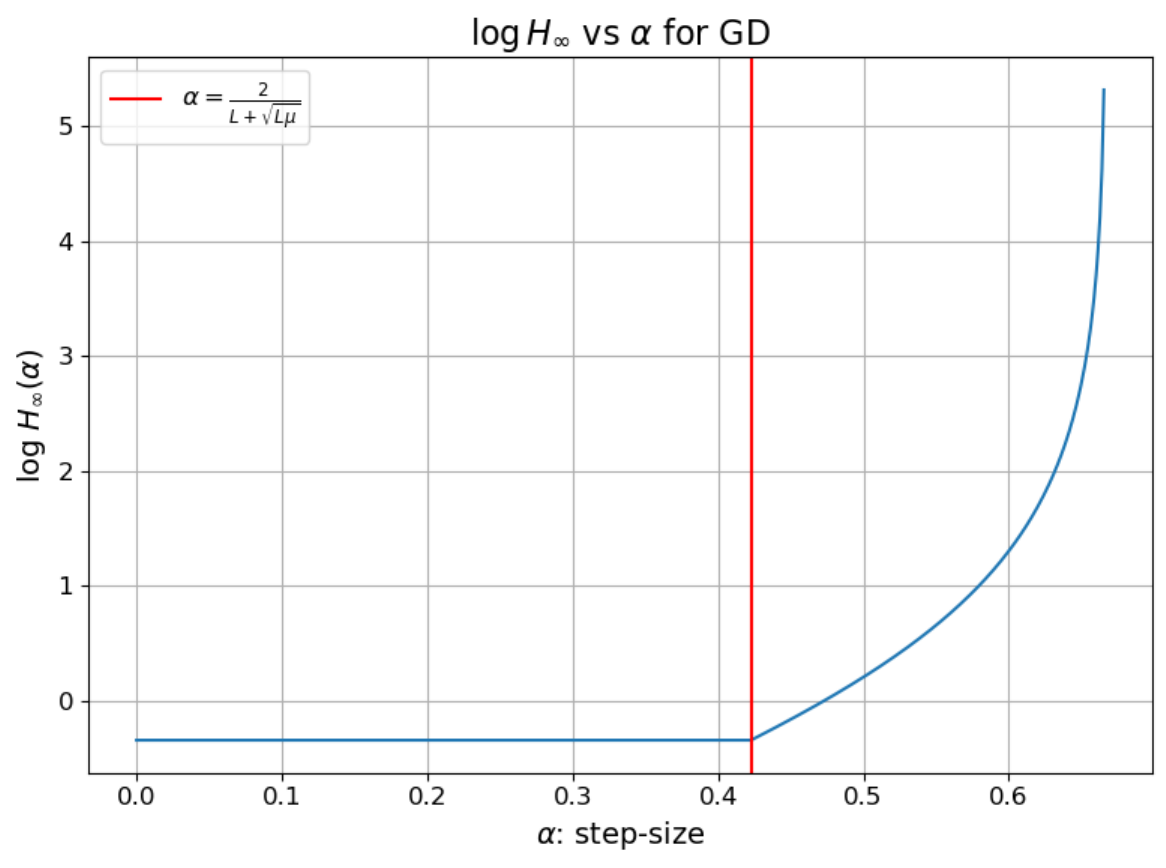}}
							\subfigure[]{\label{GD-risk-stepsize}\includegraphics[width=0.49\linewidth]{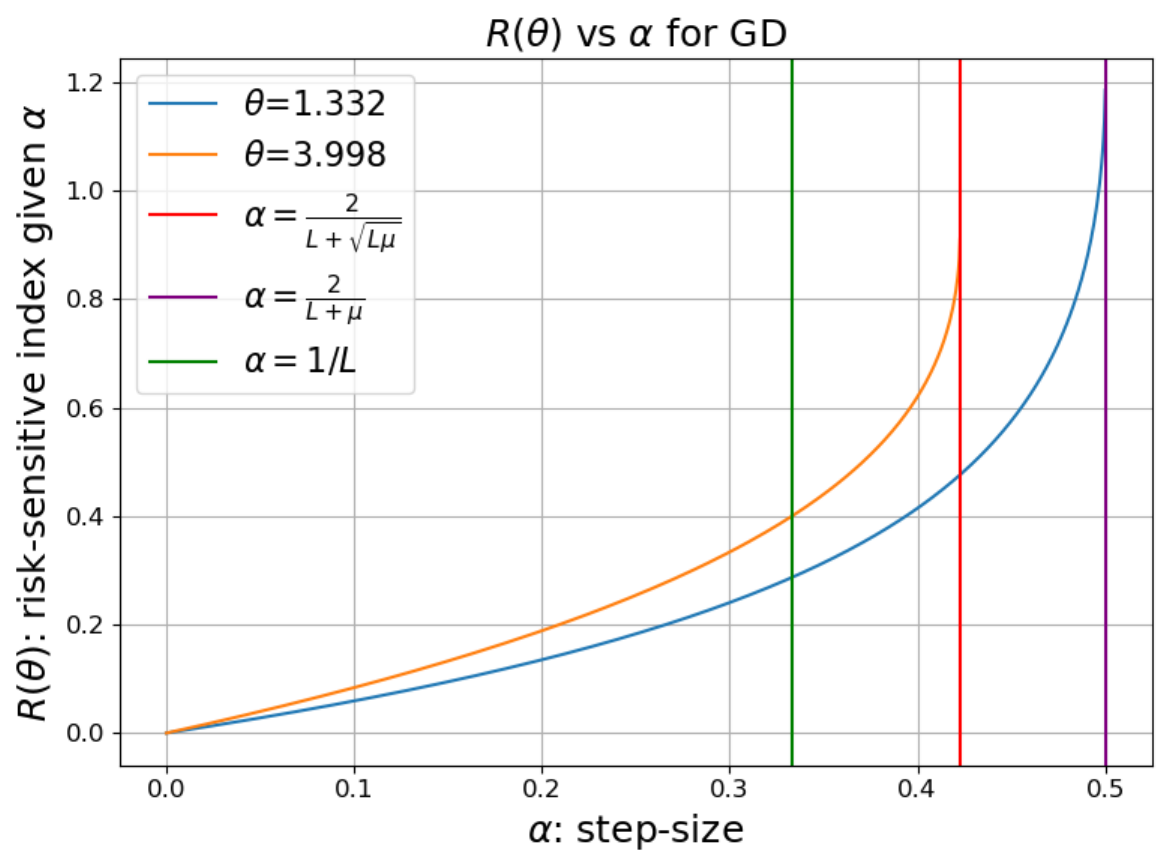}}
							\caption{\sa{(a) $H_\infty$ as a function of $\alpha$ based on \eqref{eq-hinfty-gd-formula} in the same problem setting; (b) The risk-sensitive index as a function of GD step-size $\alpha$ for $\theta \approx \{4/3,~4\}$, where the objective is a quadratic with $d = 2, L = 3, \mu = 1$, and $\sigma^2 = 2$.}}
							\label{fig: GD-risk-hinfty}
						\end{figure}
						
						\mgb{\ys{In} \cref{GD-hinfty-stepsize}, we plot the $H_\infty$-norm for GD as a function of the step size based on the formula \eqref{eq-hinfty-gd-formula}. The best robustness in the $H_\infty$ sense—namely, the minimal achievable $H_\infty$-norm of $\tfrac{1}{\sqrt{2\mu}}$, as implied by \eqref{eq-hinfty-gd-formula} and \cref{thm-h-inf}—is attained only when $\sa{0<}\alpha \leq \alpha_{\text{crit}} := \tfrac{2}{L + \sqrt{L\mu}}$. 
							In particular, RS-GD works with this maximal step size 
							\sa{$\alpha_{\text{crit}}$} that provides the fastest rate subject to the best (smallest) $H_\infty$-norm.\footnote{GD admits the convergence rate $\rho_{\text{GD}}(\alpha) = \max\{|1-\alpha \mu|, |1-\alpha L|\}$ for $\alpha \in (0,\frac2L)$, see \cite{bertsekas2002nonlinearprog}.} \sa{For $\alpha> \alpha_{\text{crit}}$, $H_\infty$-norm increases with $\alpha$; therefore, the range of $\theta$ for which the risk-sensitive index is finite gets narrower.} \mgbis{This is illustrated in \cref{GD-risk-stepsize}, which plots the risk as a function of the step size for GD with two choices of $\theta$, namely around $4$ (orange) and $4/3$ (blue).} 
							Comparing three step size choices, $\alpha = \frac1L$ (GD-pop),  $\alpha = \frac{2}{L+\mu}$ (GD-fastest), and $\alpha=\alpha_{\text{crit}}$ (RS-GD), we observe that 
							the risk becomes infinite for GD-fastest as $\theta$ approaches $4/3$, \mgbis{whereas for the same $\theta$ value, the risk is still finite for the other step size choices (that are relatively smaller)}. Similarly, the risk associated 
							\sa{with} RS-GD 
							\sa{tends to} infinite as $\theta$ approaches $\ys{4}$ but it is finite \mgbis{for smaller choices of the step size.} Basically, faster convergence rates come at the expense of increased risk sensitivity.} 
						
						\begin{figure}[h!]
							\centering     
							\subfigure[]{\label{fig: HB-risk-stepsize}\includegraphics[width=0.49\linewidth]{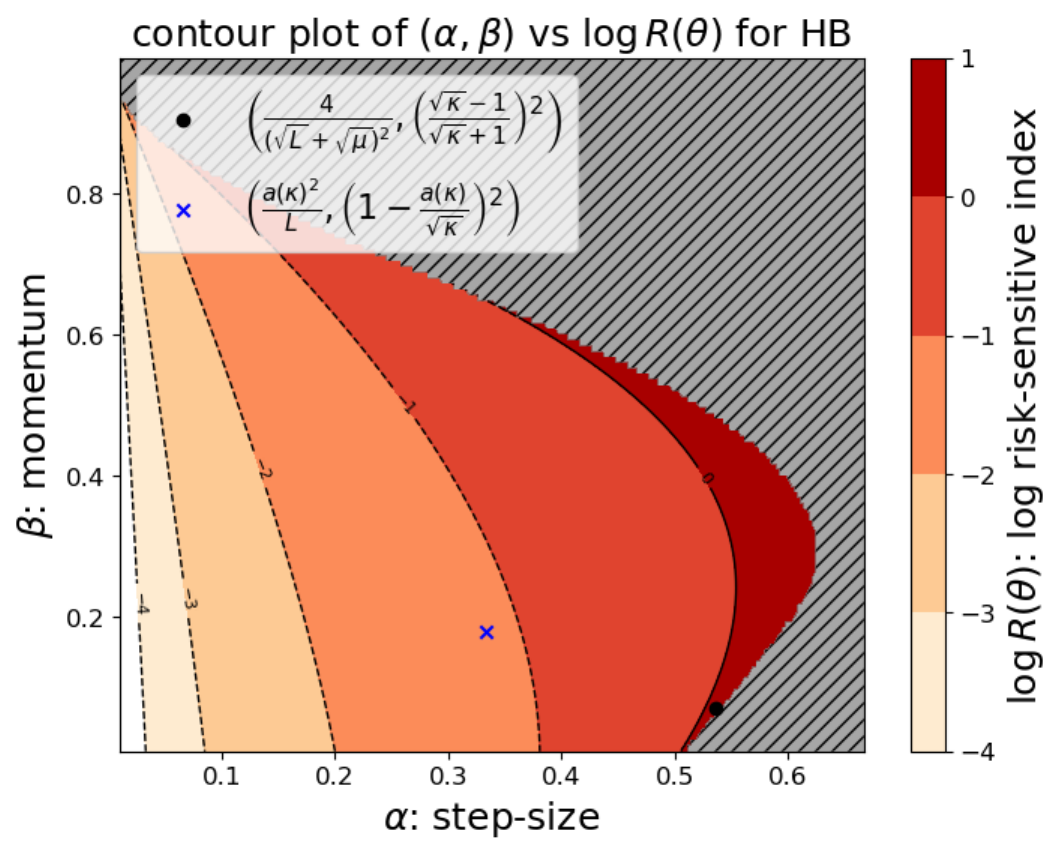}}
							\subfigure[]{\label{fig: HB-rate-stepsize}\includegraphics[width=0.49\linewidth]{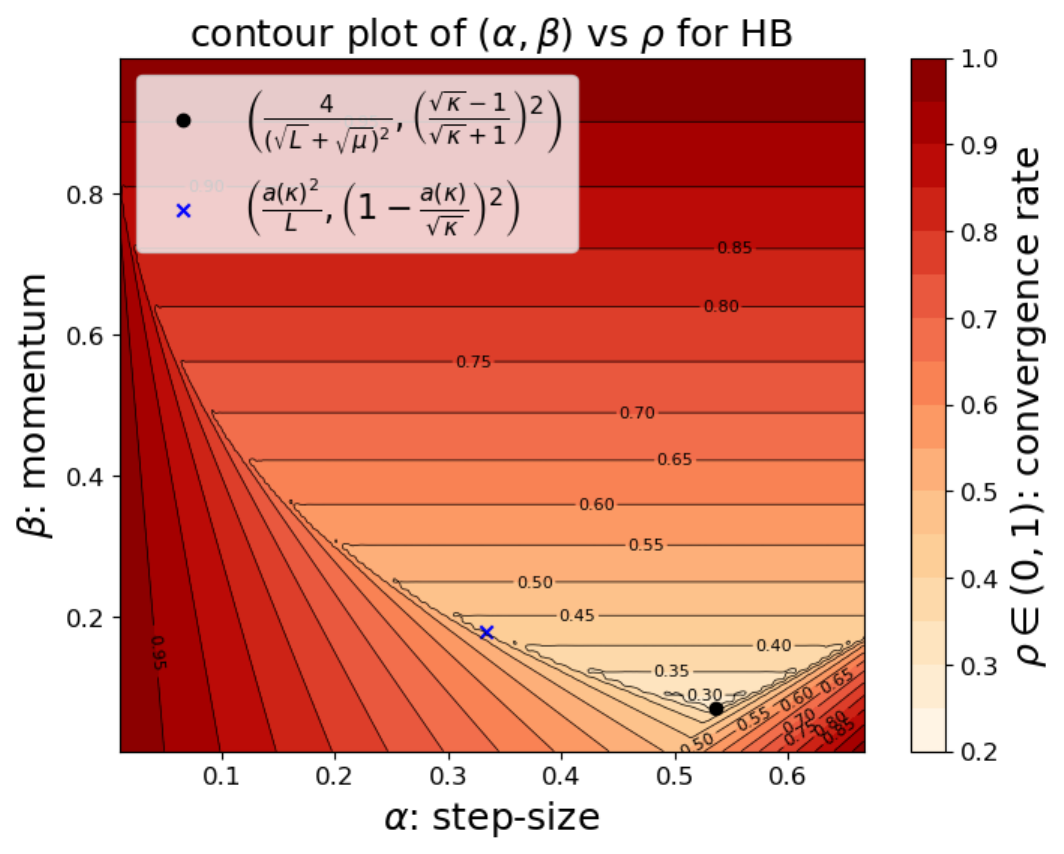}}
							\caption{(a) Logarithm of the risk-sensitive index $R(\theta)$ as a function of the parameters $\alpha,\beta$ of HB with $\theta = 1.2$, $\sigma^2=2$,$\nu=0$. (b) Convergence rate $\rho$ as a function of $\alpha,\beta$. The objective is a quadratic with $d = 2, L = 3,\mu = 1$.}
							\label{fig: HB-risk-rate}
						\end{figure}
						
						In \cref{fig: HB-risk-rate}, we consider the HB algorithm on a quadratic function with $\mu=1, L=3, d=2, \sigma^2=2, \theta=1.2$ and plot the logarithm of risk-sensitive index (\cref{fig: HB-risk-stepsize}) and the convergence rate $\rho$ (\cref{fig: HB-rate-stepsize}) as a function of the step size \sa{$\alpha$} and momentum parameter \sa{$\beta$} --see \cite[Lemma 3.1]{can2022entropic} for a precise convergence rate characterization 
						\sa{of HB for} strongly convex quadratics. The black dot indicates the standard HB parameters that yield the fastest convergence rate, while the blue cross represents the parameters of the RS-HB method, which achieve the best known rate while minimizing the $H_\infty$-norm \cite{gurbuzbalaban2023robustly}. The shaded gray region denotes parameter values for which the risk becomes infinite. As shown in \cref{fig: HB-risk-rate}, the fastest convergence rate in the noiseless case (black dot) comes at the cost of higher risk, whereas the RS-HB configuration (blue cross), offers improved robustness with a lower risk sensitivity and $H_\infty$-norm. \mgbis{Overall, we observe that faster convergence rates (lighter colors in \cref{fig: HB-rate-stepsize}) are typically accompanied by higher risk (darker colors in \cref{fig: HB-risk-stepsize}).}

						In \cref{fig: NAG-risk-rate}, we observe a phenomenon similar to the HB case for the NAG algorithm. The parameter setting labeled NAG-fastest (black dot), which achieves the fastest convergence rate in the absence of noise, incurs a higher risk compared to NAG-pop (blue cross), a commonly used parameterization. While NAG-fastest offers faster convergence, it comes at the cost of increased risk. 
						\sa{As in GD and HB, the higher risk values are generally associated with faster convergence rates also for NAG.}\looseness=-1
						
						\begin{figure}[h!]
							\centering     
							\subfigure[]{\label{fig: NAG-risk-stepsize}\includegraphics[width=0.49\linewidth]{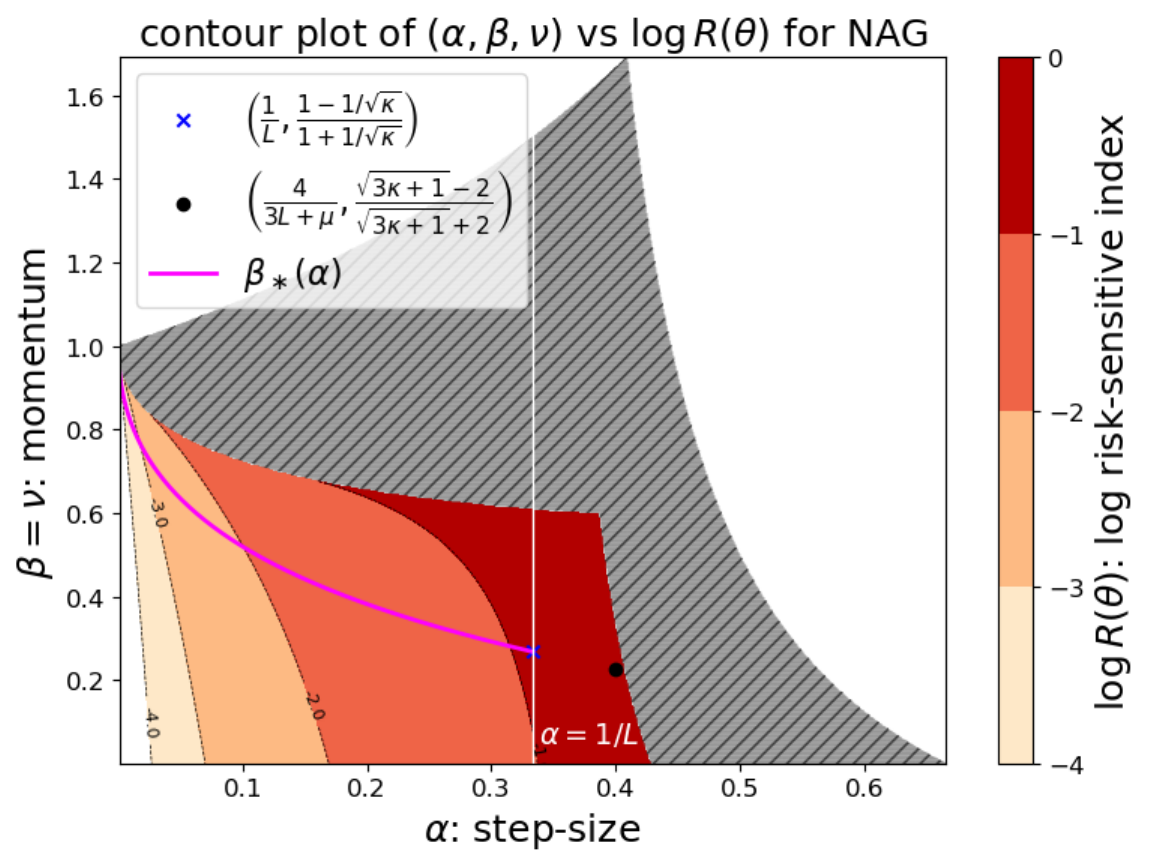}}
							\subfigure[]{\label{fig: NAG-rate-stepsize}\includegraphics[width=0.49\linewidth]{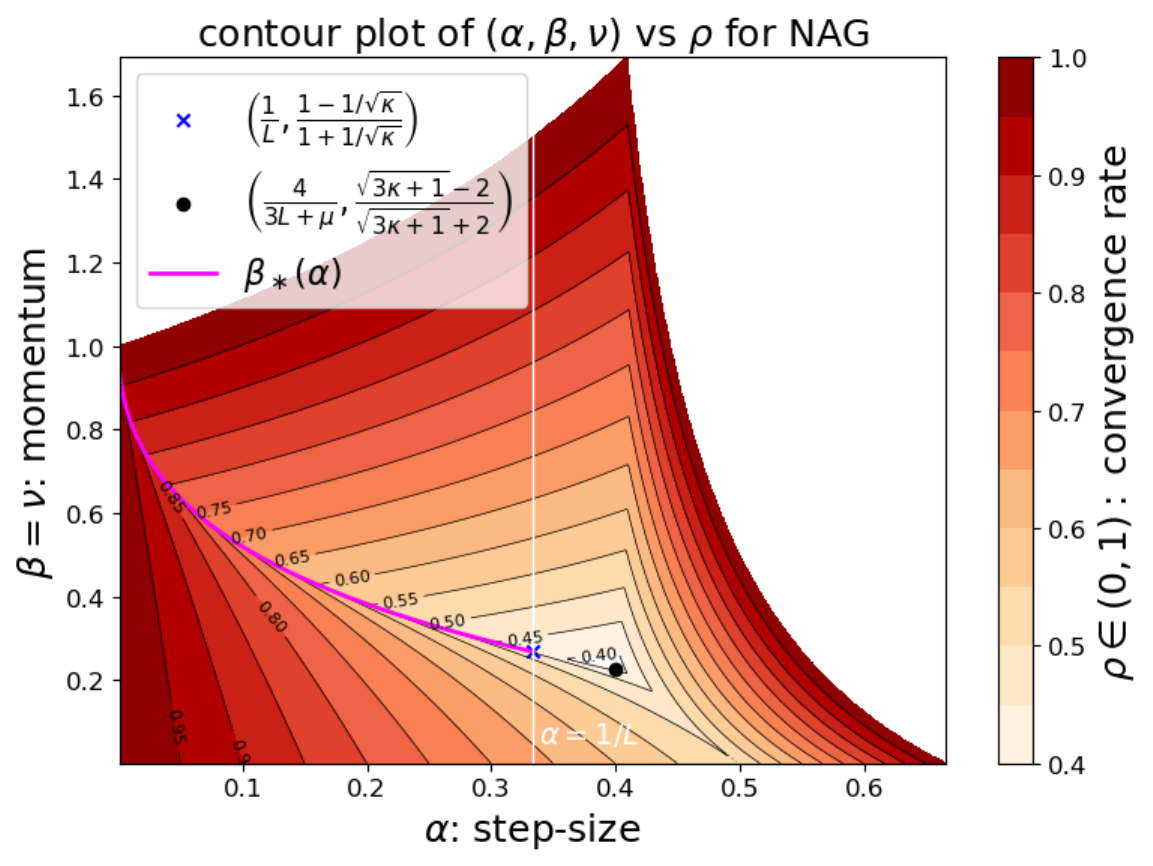}}
							\caption{(a) Logarithm of the risk-sensitive index $R(\theta)$ as a function of the parameters $\alpha,\beta$ of NAG with $\theta = 3.7$, $\sigma^2=2$,$\nu=\beta$. (b) Convergence rate $\rho$ as a function of $\alpha,\beta$, \sa{where} \ys{$\beta_*(\alpha) = \frac{1 - \sqrt{\alpha\mu}}{1+\sqrt{\alpha\mu}}$ for $\alpha \in [0,\frac1L]$. The objective is a quadratic with $d = 2, L = 3,\mu = 1$.} 
							}
							\label{fig: NAG-risk-rate}
						\end{figure}

						\begin{figure}[h!]
							\centering
							\includegraphics[width=0.75\linewidth]{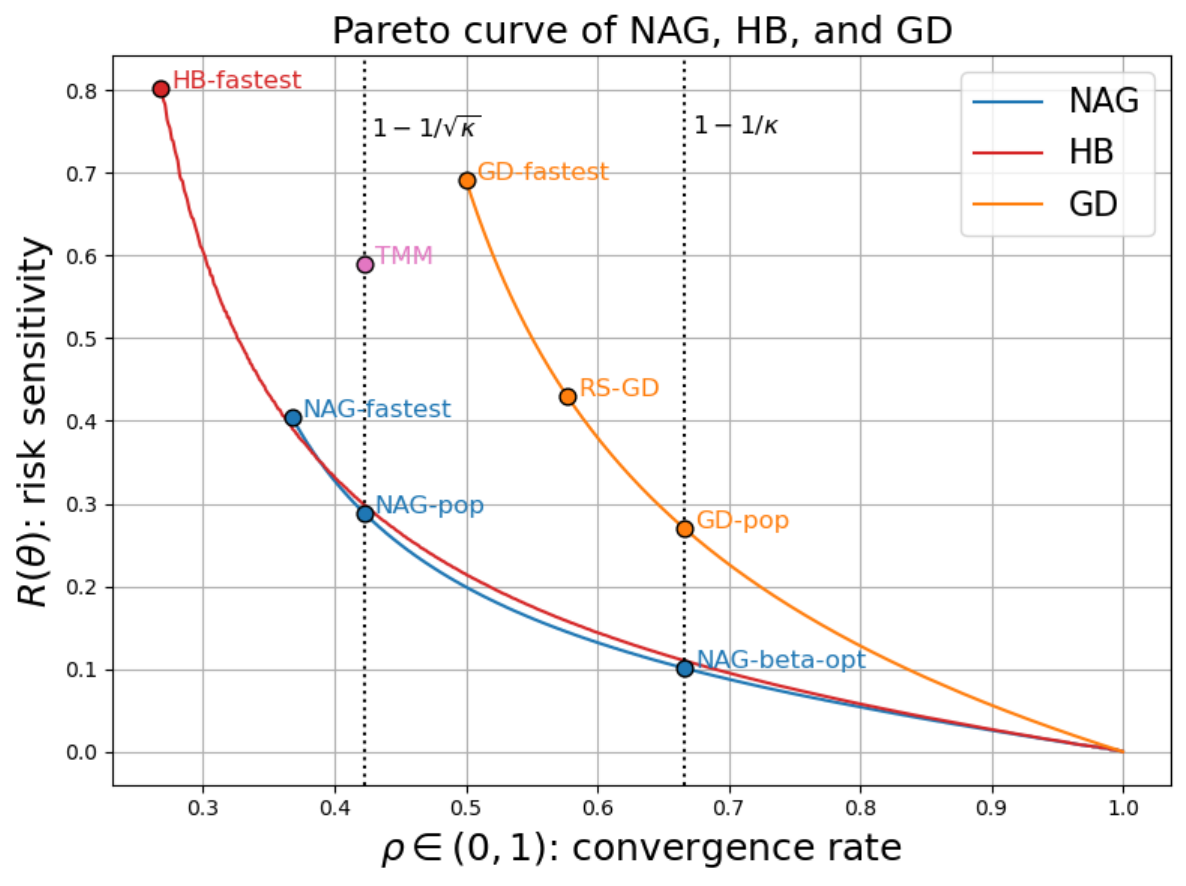}
							\caption{Pareto boundary for GD, NAG, HB illustrating the trade-off between risk and rate for a quadratic objective with $d = 2, L = 3, \mu = 1, \sigma^2 = 2, \theta = 0.2$.}
							\label{fig: pareto-boundary}
						\end{figure}
						As demonstrated so far, a faster rate, \sa{i.e., smaller $\rho\in(0,1)$,} comes often at the expense of an increased risk sensitivity \sa{index}. To better understand this relationship, \sa{fixing the risk parameter $\theta = 0.2$,} we examine the Pareto-optimal boundary in \Cref{fig: pareto-boundary} \sa{for} the same simple quadratic problem 
						\sa{we used for} \cref{fig:risk-vs-theta}, \sa{i.e., $d = 2, L = 3, \mu = 1$, and $\sigma^2 = 2$.} 
							In this setting, a point \((\rho,R)\) in the plane of \emph{convergence rate} \(\rho\in(0,1)\) and \emph{risk sensitivity \sa{index}} \(R\ge0\) is said to lie on the Pareto‐optimal boundary if there is no other achievable pair \((\rho', R')\) with
							$  \rho' < \rho$ and $R' \leq R$,
							or 
							$\rho' \leq \rho$ and $R'<R.$
							Equivalently, one cannot improve both the rate and the risk simultaneously beyond a Pareto‐optimal point. In \cref{fig: pareto-boundary}, each smooth curve corresponds to one algorithm (GD, HB, NAG), parametrized by its step‐size \(\alpha\) (and momentum parameters \(\beta,\nu\) where applicable).  As we slide \(\alpha\) (and \(\beta,\nu\)) within the stability region, we trace out the set of all achievable \(\bigl(\rho(\alpha,\beta,\nu),\,R(\alpha,\beta,\nu)\bigr)\). The Pareto frontier is the upper‐left envelope of these curves: \mg{it marks the parameters that has the lowest risk $R$ for a given achievable rate $\rho$ 
								\sa{for} a method.} 
							\mg{This frontier is numerically estimated via a grid search
								over the parameter space using the convergence rate formula from \cite{can2022entropic} and the explicit risk characterization provided in \cref{thm-gmm-risk-formula}.\footnote{\ys{A $3000 \times 3000$ grid is used for HB and NAG over the interval $(0,\frac2L) \times (0,2)$ for $(\alpha,\beta)$. 
										\sa{For HB we set $\nu =0$, 
											while for NAG we set $\nu = \beta$.} A grid of size 5000 is used for GD over the interval $(0, \alpha^*)$, where $\alpha^* \coloneqq \frac{2}{L+\mu} = \max\{\frac1L, \frac2{L+\mu}, \frac{2}{L + \sqrt{L\mu}}\}$, \mgbis{for which GD achieves the fastest rate.}
							}}}
							
							The dashed vertical lines \sa{on the right of \Cref{fig: pareto-boundary} marks} the standard rate $1-\frac{1}{\kappa}$ achieved by gradient descent with common step size $\alpha=1/L$ \sa{while the other line on the left marks} the accelerated rate  $1-\frac{1}{\sqrt{\kappa}}$ with the square root dependency to $\kappa$; \sa{\mgbis{indeed}, $1-\frac{1}{\kappa}>1-\frac{1}{\sqrt{\kappa}}$ \mgbis{here as} $\kappa>1$}. Dots label classical parameter choices within the parameter space of each algorithm 
							\sa{--see \Cref{tab:algo-params} for the definitions of these parameter choices.} We observe that GD achieves the most \textit{conservative} frontier: \mgbis{its Pareto-optimal curve lies above those of the momentum methods HB and NAG shown here.}
							\mgbis{Although GD-fastest attains the best rate achievable by GD, it remains slower and riskier than NAG-pop and NAG-fastest.} 
						For moderate rates (\(\rho\approx0.5\)–\(1.0\)), \mgbis{the Pareto-optimal curve of HB shows that HB can achieve lower risk than GD.} 
						At very fast rates (\(\rho\approx0.25\)–\(0.275\)) which GD cannot achieve, HB’s accelerated dynamics come with increased \sa{risk sensitivity}. NAG's popular tuning, NAG-pop, lies slightly below HB for low‐to‐moderate rates, achieving a small reduction in risk for the same \(\rho\). \mg{NAG-fastest is faster than NAG-pop as expected, but admits a slightly higher risk than what HB can achieve for the same rate.} 
						\sa{On the other hand,} TMM sits clearly above NAG and HB, indicating that while it is able to achieve a faster rate than GD, 
						\sa{its risk sensitivity index is more} than its counterparts which are able to achieve the same rate (NAG-pop has exactly the same rate) \mgbis{in the quadratic setting}. Finally, if the step size, $\alpha$, is carefully selected, one can achieve the same rate as GD-pop with NAG-beta-opt while simultaneously minimizing risk sensitivity among all competing popular parameterizations
						\sa{--indeed, for \Cref{fig: pareto-boundary} we selected} $\alpha = \frac{\mu}{L^2}$ for NAG-beta-opt to match GD-pop's convergence rate.
						
						\subsection{Large deviations and the risk-sensitive index} 
						It is well-known that averaging iterates can help with optimization performance in inexact gradient settings, see e.g., \cite{gurbuzbalaban2023robustly,zhang2022sapd+,devolder2013exactness,nemirovski2009robust,dieuleveut2020bridging} and the references therein. If we consider the ergodic averages of the iterates $$\bar{x}_K:=\frac{x_0+x_1+\dots + x_K}{K+1},$$ a natural performance metric we want to control is the probability $\mathbb{P}(f(\bar{x}_K) - 
						\sa{f_*} \geq t)$ that the suboptimality is larger than a given threshold \sa{$t>0$}. Clearly, by convexity, we have the bound
						\beq
						\mathbb{P}(f(\bar{x}_K) - 
						\sa{f_*}\geq t) \leq
						\mathbb{P}\left( \frac{  \sum_{j=0}^K \sa{[f(x_j) - 
								f_*]}}{K+1} \geq t\right)
						=
						\mathbb{P}\left(\frac{S_K}{K+1}\geq t\right).
						\label{convexity-averaging-ldp}
						\eeq
						The decay rate of this probability as a function of $t$ is 
						related to the large deviation behavior of 
						the sequence of random variables  $\Delta_K=S_K/(K+1)$ for $K\geq 0$ \cite{pham-risk-sensitive-control,dembo2009large}. Large deviations aim to quantify the exponential decay rate of the probabilities corresponding to rare events. More specifically, given a sequence of random variables $\{\Delta_K\}_{K\geq 0}$, we say that the sequence $\Delta_K$ satisfies a large deviation principle (LDP) if there exists a non-negative, lower semi-continuous function $I:\mathbb{R}\to [0,\infty]$ such that for every (Borel) measurable set $E \subset \mathbb{R}$, it holds that
						\beq
						-\inf_{s \in E^o} I(s)
						\leq 
						\liminf_{K\to\infty} \frac{1}{K}\log \mathbb{P}(\Delta_K \in E)
						\leq 
						\limsup_{K\to\infty} \frac{1}{K}\log \mathbb{P}(\Delta_K \in E)
						\leq -\inf_{s\in \bar{E}} I(s)\,, 
						\label{def-LDP}
						\eeq
						where $E^o$ and $\bar{E}$ denotes the interior and closure of the set $E$. If only the (upper-bound) right-most inequality in \eqref{def-LDP} holds (either for all Borel sets $E$ or for certain classes of sets), then we say that $\{\Delta_K\}$ satisfies an \emph{LDP upper bound}.
						The function \sa{$I(\cdot)$} is called the
						\textit{rate function}. \mgb{A rate function is called a \emph{good rate function} if it admits compact level sets; this allows the infimum over $I(s)$ in \eqref{def-LDP} to be attained 
							\sa{at some} 
							point in the closure of $E$, \sa{i.e., $\bar E$}. One difficulty in characterizing a good rate function arises from the fact that the random variables \sa{\(\{f(x_k) - 
								f_*\}_{k\geq 0}\)} are not i.i.d., even when the noise sequence $\ys{\{w_{k+1}\}_{k\ge0}}$~is i.i.d. Consequently, standard large deviations results such as Cramér’s Theorem do not apply. In the following result, we characterize the rate function by verifying the conditions of the Gärtner--Ellis Theorem~\cite{dembo2009large}, which accommodates non-i.i.d.~sequences 
							\sa{similar to} \( \{f(x_k) - 
							\sa{f_*}\}_{k\geq0} \). \mgbis{This characterization further leverages the risk-sensitive index formula, established in the proof of \cref{thm-gmm-risk-formula}, expressed as an integral over the unit circle.}
							\looseness=-1
						}
						
						\begin{proposition}\label[proposition]{proposition-rate-function} Let $f$ be a quadratic of the form \eqref{eq-quad}. Assume that the gradient noise follows an isotropic Gaussian distribution, i.e., Assumption \ref{assump-exp-moment-det-additive} holds for an i.i.d. sequence $\ys{\{w_{k+1}\}_{k\geq 0}}$ with $\ys{w_{k+1}} \sim \mathcal N(0,\ys{\sa{\frac{\sigma^2}{d}\mgb{\mathsf{I}_d}}})$ for some $\sigma>0$. Let the parameters $(\alpha, \beta, \nu)$ of GMM be such that $\rho(A_Q)<1$ \sa{for $A_Q$ defined in \eqref{def-AQ}}. Then, the sequence \sa{$\{\Delta_K\}_{K\geq 0}$ such that} $\Delta_K = S_K/(K+1)$ satisfies an LDP with \sa{the following rate function:}
							\beq I(s) =  \sup_{-\infty < \theta < \frac{d}{H_\infty^2}}\left[\frac{\theta}{2\sigma^2} \big(s - R(\theta)\big)
							\right],
							\label{eq-rate-function-quad}
							\eeq
							\sa{which is a good rate function,} where $H_\infty$ 
							\sa{satisfies} \eqref{hinfty-quad} and \mgbis{the risk-sensitive index admits the integral representation}
							\mgc{\beq R(\theta)=
								\begin{cases}
									-\dfrac{\sigma^2}{2\pi\,\theta}\displaystyle\int_{-\pi}^{\pi}
									\log\det\!\Bigl(\mathsf{I}_d-\dfrac{\theta}{d}\,G\!\left(e^{i\omega}\right)G\!\left(e^{i\omega}\right)^{*}\Bigr)\,d\omega,
									& \text{if } -\infty < \theta < \frac{d} {H_\infty^2}\sa{;}\\[2.0ex]
									+\infty, & \text{if } \theta \ge \frac{d}{H_\infty^{2}},
								\end{cases}
								\label{eq-entropy-integral}
								\eeq
								with the convention that the value at $\theta=0$ is interpreted by continuity as the limit
								\beq
								R(0)&=&\lim_{\theta\to 0} -\dfrac{\sigma^2}{2\pi\,\theta}\displaystyle\int_{-\pi}^{\pi}
								\log\det\!\Bigl(\mathsf{I}_d-\dfrac{\theta}{d}\,G\!\left(e^{i\omega}\right)G\!\left(e^{i\omega}\right)^{*}\Bigr)\,d\omega\\
								&=&\frac{\sigma^{2}}{2\pi d}\int_{-\pi}^{\pi}
								\operatorname{trace}\!\Bigl(G\!\left(e^{i\omega}\right)G\!\left(e^{i\omega}\right)^{*}\Bigr)\,d\omega.
								\eeq}Here, $ G(z) \coloneqq T(z\mathsf{I}_{2d} - A_Q)^{-1}B \in \mathbb{R}^{d\times d}$ for $z\in \mathbb{C} \sa{\setminus}\mbox{\text{Spec}}(A_Q)$ where $\mbox{\text{Spec}}(A_Q)$
							denotes the set of eigenvalues of $A_Q$, \mgbis{$G\!\left(e^{i\omega}\right)^{*}$} denotes the Hermitian (conjugate) transpose of $G(e^{i\omega})$, 
							\sa{$B$ and $T$ are defined in \eqref{def-ABC} and \eqref{def-T}, respectively.} Furthermore, for any $t>0$, 
							\beq
							\ys{\limsup_{K\to\infty}~ \ys{\frac1K}
								\log \mathbb P(f(\bar x_K) - 
								\sa{f_*}> t)
								\leq}
							\lim_{K \to \infty} \frac1K \log \mathbb P\left(\frac{S_K}{K+1} \geq t\right)
							\mg{=}-\inf_{s\in [t,\infty)} \ys{I(s)}.  
							\label{eq-rate-function-interval}
							\eeq
						\end{proposition}
						\begin{proof} The proof is provided in \cref{sec-prop-rate-function}.
						\end{proof}
						\begin{remark}\label[remark]{remark-convex-conjugate} \mgc{Since $R(\theta)<+\infty$ if and only if $\theta H_\infty^{2}<d$ (cf.~\eqref{eq-entropy-integral}),
								the supremum in \eqref{eq-rate-function-quad} may equivalently be taken over all $\theta\in\mathbb{R}$, i.e., $
								I(s)=\sup_{\theta\in\mathbb{R}}\frac{\theta}{2\sigma^{2}}\bigl(s-R(\theta)\bigr).
								$
								With the change of variable $\lambda=\theta/(2\sigma^{2})$ and $\tilde{\Lambda}(\lambda)=\lambda\,R(2\sigma^{2}\lambda)$, we obtain
								$
								I(s)=\sup_{\lambda\in\mathbb{R}}\{\lambda s-\tilde{\Lambda}(\lambda)\},
								$
								i.e., $I$ is the Fenchel--Legendre (convex) conjugate of $\tilde{\Lambda}$; \sa{hence,}
								$\tilde{\Lambda}$ is simply a scaled version of the risk-sensitive index. See \cref{sec-prop-rate-function} (proof of
								\cref{proposition-rate-function}) for details.}
						\end{remark}
						\mgbis{With some additional effort, involving the computation of the matrix \(G(e^{i\omega}) G^*(e^{i\omega})\) and leveraging its diagonal structure, we can simplify the integral representation \eqref{eq-entropy-integral} in the following corollary.
						}
						
						\begin{corollary}\label[corollary]{coro-risk-index-as-onedim-integral} In the setting of \cref{proposition-rate-function}, the formula \eqref{eq-entropy-integral} is equivalent to
							\mgc{
								\beq R(\theta) = \begin{cases}
									-\frac{\sigma^2}{2\pi \theta} \displaystyle\sum_{i=1}^d\displaystyle \int_{-\pi}^\pi  \log \left(1 - \mgb{\frac{\theta}{d}} h_{\omega}(\lambda_i)
									\right) d\omega\sa{,} & \mbox{if} \quad -\infty< \theta < \frac{d} {H_\infty^2}\sa{;}\\ 
									+\infty, & \theta \geq  \frac{d} {H_\infty^2}, 
								\end{cases} 
								\label{eq-entropy-integral-bis}
								\eeq
								with the convention that the value at $\theta=0$ is interpreted by continuity as the limit $$R(0)=\lim_{\theta\to 0} -\frac{\sigma^2}{2\pi \theta} \displaystyle\sum_{i=1}^d\displaystyle \int_{-\pi}^\pi  \log \left(1 - \mgb{\frac{\theta}{d}} h_{\omega}(\lambda_i)
								\right) d\omega =  \frac{\sigma^2}{2\pi d} \displaystyle\sum_{i=1}^d\displaystyle \int_{-\pi}^\pi   h_{\omega}(\lambda_i) d\omega.$$} Here, $\{\lambda_i\}_{i=1}^d$ are the eigenvalues of $Q$, \[
							h_\omega(\lambda) 
							:= \frac{\alpha^2 \lambda}{2 \Bigl( 
								1 + \tilde{b}_{\lambda}^2 + \tilde{c}_{\lambda}^2 
								+ 2\bigl[ \tilde{b}_{\lambda}(1+\tilde{c}_{\lambda})\cos(\omega) 
								+ \tilde{c}_{\lambda} \cos(2\omega) \bigr] 
								\Bigr)},
							\]
							with $\tilde{b}_\lambda =\alpha\lambda (1+\nu)-(1+\beta)$ and ${\tilde{c}_\lambda} = \beta - \alpha \lambda \nu$ as in \eqref{def-b-c-lambda}.
						\end{corollary}
						\begin{proof}The proof can be found in \cref{appendix-proof-coro-risk-index-as-onedim-integral}.
						\end{proof}
						\begin{figure}[h!]
							\centering
							\subfigure[]{\label{fig:rate-fn-quadratics}\includegraphics[width=0.49\linewidth]{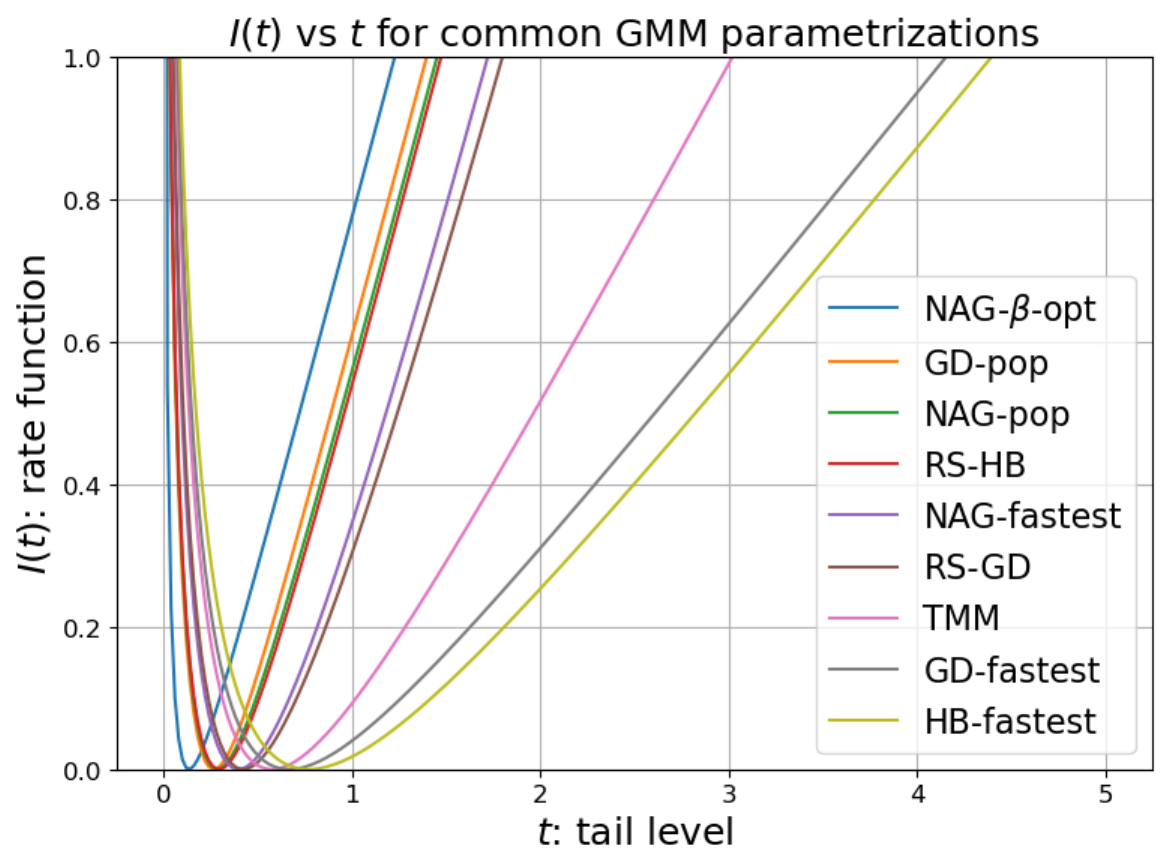}}
							\subfigure[]{\label{fig:rate-fn-nag-beta-opt-vary-stepsize}\includegraphics[width=0.49\linewidth]{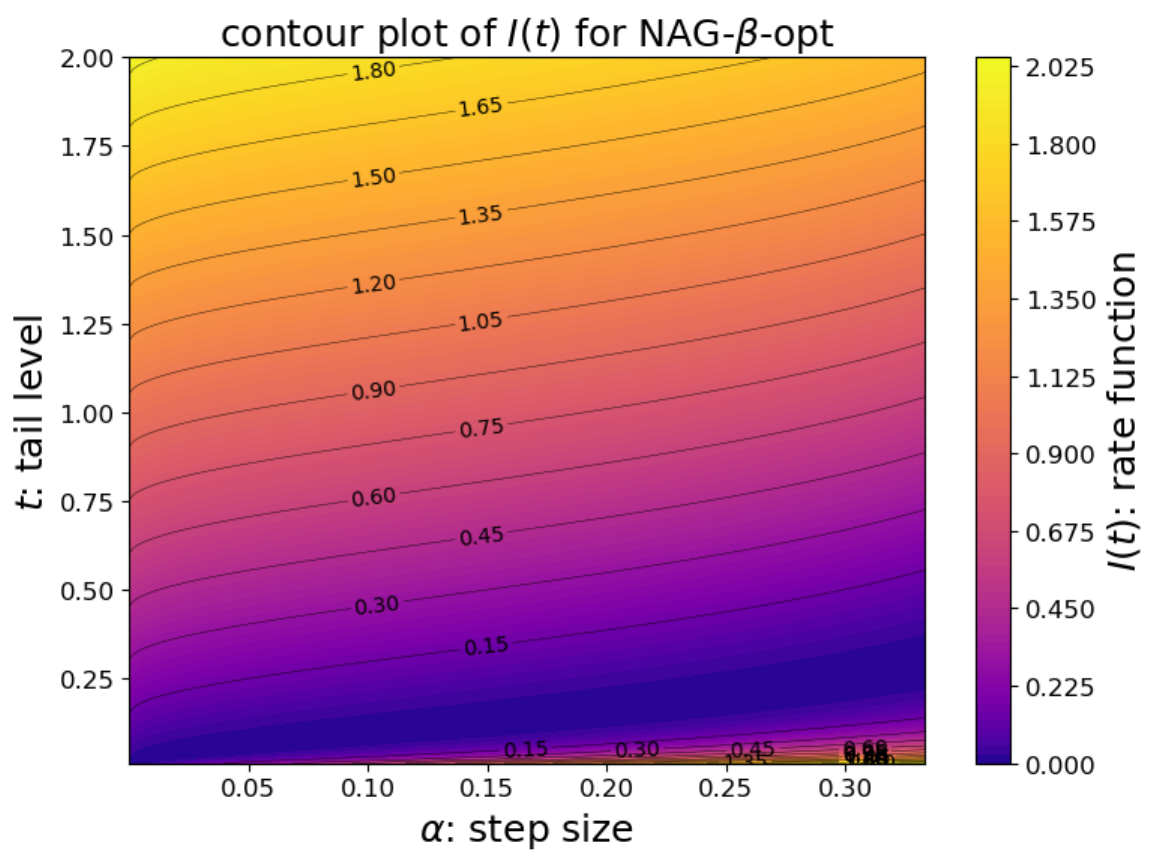}}
							\caption{\ys{(a) The rate function, $I(t)$, for common GMM parameterizations as a function of $t$, a given suboptimality threshold. (b) The rate function, $I(t)$, for NAG-beta-opt varying step-size $\alpha$ and varying suboptimality threshold, $t$. The objective is a quadratic with $d = 2, L = 3, \mu = 1, \sigma^2 = 2$.}}
							\label{fig:rate-fn-qd}
						\end{figure}
						\sa{\ys{In \cref{fig:rate-fn-quadratics}}}, we illustrate \cref{proposition-rate-function} for a quadratic objective \sa{as in \eqref{eq-quad} in dimension \( d = 2 \)} with parameters \sa{\( g = [0, 0]^\top \)}, \( h = 0 \), and
						$
						Q = \begin{bmatrix} \mu & 0 \\ 0 & L \end{bmatrix}$
						where \( \mu = 1 \), \( L = 3 \). The figure displays the rate function \( I(t) \) corresponding to various methods and parameter choices within the GMM framework, as listed in \cref{tab:algo-params}, using the dual representation \eqref{eq-rate-function-quad}. \mgbis{This entails computing $R(\theta)$, for which we evaluate the integral in~\eqref{eq-entropy-integral} numerically. For $\theta>0$, one may alternatively compute $R(\theta)$ based on the alternative representation ~\eqref{risk-index-quadratics}.} 
						Since \( I(t) \) is a good rate function, it has compact level sets, a property that is evident in the plots. Given that the gradient noise is i.i.d.\ Gaussian, the iterates follow a Gaussian distribution, and the average suboptimality \( S_K / (K + 1) \) 
						exhibits exponential tails for all \( K \), including in the limit as \( K \to \infty \). This behavior results in the linear growth of \( I(t) \) in the large-\( t \) regime. Notably, in this regime, 
						\sa{HB-fastest, the fastest in terms of convergence rate (see \cref{tab:algo-params}), has 
							the smallest values of \( I(t) \), which indicates a} slower decay of probabilities for large deviations. 
						\mgbis{As seen from formula \eqref{eq-rate-function-quad}, the rate function is, up to scaling, the convex conjugate (Fenchel–Legendre transform) of the risk-sensitive index (see \cref{remark-convex-conjugate}). Consequently, a similar trade-off between convergence rate and risk arises: faster convergence entails a slower decay of the probabilities of rare events for sufficiently large $t$.} \sa{Due to the parabolic shape of \( I(t) \), 
							its minimum is attained} at the most probable value of \( t \), \sa{i.e.,
							smaller or larger values} being relatively less likely.
							\sa{\ys{In \cref{fig:rate-fn-nag-beta-opt-vary-stepsize}}}, we further investigate the NAG-beta-opt method, which exhibits the fastest tail decay for sufficiently large \( t \) 
							\sa{as seen \ys{in \cref{fig:rate-fn-quadratics}}}. Specifically, we present a contour plot of the rate function \ys{\( I(t) \) as a function of $\alpha$ and $t$} under the NAG-beta-opt parameterization, varying the step size \( \alpha \in (0, 1/L] \) and the tail threshold \( t \), where \( \beta = \frac{1 - \sqrt{\alpha\mu}}{1 + \sqrt{\alpha\mu}} \). The prominent “purple valley” visible in the plot traces, for each value of \( \alpha \), the corresponding value of \( t \) that minimizes \( I(t) \)—representing the most probable values achievable at that step size. \mgbis{For a fixed step size, as $t$ increases, $I(t)$ is non-decreasing, as expected, and it governs the decay rate of the probability that the average suboptimality exceeds the threshold $t$.} 

							\section{Main Results for Strongly Convex Objectives}\label{sec-strongly-convex}
							Our analysis in Section \ref{sec-quad} 
							\sa{focuses on} the asymptotic limit of $R_K(\theta)$ as $K\to \infty$ for strongly convex quadratic objectives. In this section, our purpose is to obtain bounds for finite $K$ for \sa{more general smooth} strongly convex functions $f\in \Cml$ \mgbis{within our biased noise model}.

							\sa{In general for the class of GMM algorithms,} beyond gradient descent, one difficulty is 
							that the squared distance to the optimal solution $\| {x}_k - x_*\|^2$ is not necessarily monotonically decreasing \cite[Lemma 3.1]{can2022entropic} even 
							\sa{in the absence of} noise. \sa{This monotonicity issue can be circumvented by analyzing the method using 
								the following more general Lyapunov function:}
							\beq  V_{P,c_1}({\xi}_k) := c_1 \left( f({x}_k) - f(x_*) \right) + \begin{bmatrix} {x}_k - x_* \\ {x}_{k-1} - x_*  \end{bmatrix}^\top  P\begin{bmatrix} {x}_k - x_* \\ {x}_{k-1} - x_*  \end{bmatrix},
							\label{eq-lyap-gmm-general}
							\eeq
							where $c_1\geq 0$, $P= \tilde{P}\otimes \mathsf{I}_d$ \sa{for some $2\times 2$ symmetric positive semi-definite matrix $\tilde{P}$}. This Lyapunov function has been beneficial in the study of HB, NAG and GMM methods \cite{hu2017dissipativity,can2022entropic,gurbuzbalaban2023robustly,siopt-ragm,aybat2019universally} \mgb{leading to a tight convergence rate analysis without noise (when $w_{k+1} = 0 ~\forall k\geq 0$) for 
								\sa{a wide range of parameter values}~\cite{can2019accelerated}}. 
							Indeed, in the absence of noise, \sa{it can be certified that GMM 
								iterate sequence linearly converges to $x_*$ with a rate $\rho\in (0,1)$ or better,} if $\rho$ and $\tilde{P}$ satisfy a $3\times 3$ matrix inequality (MI) \cite{hu2017dissipativity}. Explicit solutions $(\rho, \tilde{P})$ that satisfy this MI are known for NAG with the common choice of parameters (NAG-pop, NAG-beta-opt in Table \ref{tab:algo-params}), for GD and also for some choices of HB, GMM parameters \cite{can2022entropic}; however for more general parameters, explicit solutions are not known and they can be computed numerically \mgbis{in an efficient way by solving a $3\times 3$ MI.\looseness=-1}  
							
							When the objective function $f$ is strongly convex and smooth but not quadratic, the dynamical system corresponding to the GMM iterations becomes nonlinear, as the gradients are nonlinear functions of the state. In this setting, although the $H_\infty$-norm remains well-defined (as per \mg{\cref{def-Hinfty}}), deriving an explicit formula for it does not seem possible, and the proof technique used in \cref{thm-gmm-risk-formula} does not directly apply. Moreover, as observed in \cref{sec-quad}, finiteness of the $H_\infty$-norm is necessary for the risk $R(\theta)$ to remain finite in a neighborhood of zero, at least for quadratic objectives. Otherwise, $R(\theta)$ can become infinite for any $\theta > 0$. Therefore, to establish finite upper bounds on the risk $R(\theta)$ for general objectives $f \in \Cml$, we must impose conditions on the algorithm parameters that ensure the $H_\infty$-norm is finite.
							
							Recent work by \cite{gurbuzbalaban2023robustly} establishes a $4 \times 4$ MI which, if satisfied, guarantees both a finite upper bound on the $H_\infty$-norm and a descent property for the Lyapunov function \sa{$V_{P, c_1}(\cdot)$}. This result is tight in the sense that, for certain parameter choices, the bound on the $H_\infty$-norm matches the corresponding quadratic bounds up to \sa{some} constants, and in the noiseless case, this MI recovers the rate results implied by the $3\times 3$ MI \sa{provided in \cite{hu2017dissipativity}.}
							We include this result in the appendix (\cref{thm-hinfty-bound}) for completeness, together with its corollaries (\cref{cor-GD-params-MI,cor-NAG-params-MI}) that provide explicit bounds \mgbis{$\overline{H}_\infty$} on the $H_\infty$-norm 
							\sa{corresponding to GD, NAG-beta-opt and NAG-pop} which are obtained by constructing explicit solutions to the MI. In the next result, we show that when the $4 \times 4$ MI holds, it also enables us to derive bounds on the risk sensitivity $R_K(\theta)$ for any horizon $K$ and for any $\theta > 0$ satisfying $\sqrt{\theta} ~\overline{H}_\infty < 1$. \sa{This type of upper bound on $\theta$ to guarantee finiteness of $H_\infty$} is expected; a similar constraint arises in the quadratic case where computations are exact \mgbis{as in \eqref{eq-rtheta-to-prove}}. The proof of this result mirrors the dynamic programming 
							\sa{technique employed within the} risk-sensitive control \cite{whittle2002risk}. \mgbis{Since the risk metric $R_K(\theta)$ aggregates risk across iterations and the noise sequence $\{w_{k+1}\}_{k\geq0}$ is conditionally independent from $\mathcal{F}_k$, we can isolate the contribution of each step via a backward--induction argument while fixing the iteration budget to $K$. This induction argument \sa{makes the critical parameter condition  explicit, i.e.,}  \(\sqrt{\theta}\,\overline H_{\infty} < 1\) that guarantees finiteness of the risk for all iteration budgets. The proof relies on a novel Lyapunov function, obtained by modifying $V_{P,c_1}(\xi_k)$ into a weighted form that incorporates convex averages with $V_{P,c_1}(\xi_{k-1})$. Full details are provided in the appendix. This modification allows us to derive tighter bounds.\looseness=-1
							}
							\begin{theorem}\label[theorem]{thm-str-cvx-risk-cost-bound}
								Consider the noisy GMM iterations \sa{stated in} \mgb{\eqref{Sys:stoc-RBMM}} for minimizing $f\in \Cml$, starting from $\ys{x_0= x_{-1}} \in \mathbb{R}^d$ with fixed parameters $(\alpha, \beta, \nu)$. Let \cref{assump-exp-moment-det-additive} hold 
								and consider the risk-sensitive cost in \eqref{def-risk-cost} \mgb{for \sa{some given arbitrary iteration budget $K\geq 1$ that is fixed}}.
								
								\mgb{Assume that there exists non-negative scalars $\rho_0, \rho_1, \rho_2, \rho_3 \in [0,1)$, \sa{$a, b, c_1, c_0\geq 0$,} and a positive semi-definite matrix $\tilde{P}\in\mathbb{R}^{2\times 2}$ which satisfies the conditions (i) and (ii) of \cref{thm-hinfty-bound} so that the $H_\infty$-norm of the GMM system admits the finite upper bound 
									\beq \quad\quad 
									\mgb{
										\overline{H}_\infty:= \bigg(
										\tfrac{1 }{1-\big(p+q\big)} {  \tfrac{{
													r } }{{ c_1 + \frac{2}{L} \tilde{r}(\tilde{P}) }} 
										} 
									}
									\bigg)^{1/2} <\infty,
									\eeq
									where 
									$$p = \rho_\ys{0}^2 + c_1\rho_1^2 + \rho_2^2
									+ \frac{4b(1 + \nu)^2L^2}{\mu}c_1, \quad
									q = \rho_3^2 + \frac{4b\nu^2L^2}{\mu}c_1, \quad 
									\ys{r := a + \alpha^2\left(\frac{c_1L}{2} + \tilde P_{11}\right)},$$
									and 
									$$\tilde{r}(\tilde P) = \begin{cases}
										\tilde{P}_{11} - {\tilde{P}_{12}^2}/{\tilde{P}_{22}} & \mbox{if} \quad \tilde{P}_{22}\neq 0, \\
										\tilde{P}_{11} & \mbox{if} \quad \tilde{P}_{22}=  0.
									\end{cases}
									$$  
									Then,}
								given $\theta>0$, we have the upper bound $R_{K}(\theta) 
								\leq \overline{R}_{K}(\theta)
								$ with
								\beq
								\overline{R}_{K}(\theta) :=
								\begin{cases}
									\displaystyle
									\frac{1}{\left(c_1 + \frac{2}{L} \tilde{r}(\tilde{P})\right)(K+1)}
									\min \left\{
									\frac{1}{J_{p,q}} \bar{c}\left(K, \frac{\theta}{J_{p,q}} \right),
									\bar{c}(K+1, \theta)
									\right\}, & \text{if } \sqrt{\theta} \,\overline{H}_\infty < 1; \\[10pt]
									+\infty, & \text{otherwise},
								\end{cases}
								\label{def-risk-bound-bar-RK}
								\eeq
								where 
								$
								\bar{c}(K, \theta) :=
								\left( \frac{1 - \lambda_+^{K+1}}{1 - \lambda_+} - 
								1 + J_{p,q}\right) V_{P,c_1}(\xi_0)
								+ \ys{\hat c(K,\theta)}
								$,

								$$ \lambda_+ = \frac{p+\sqrt{p^2+4q}}2 \in [0,1), \quad J_{p,q} = \frac{1}{1+\lambda_+-p} \in (0,1], 
								$$
								
									$$
									\ys{\hat c(K,\theta)}
									\coloneqq 
									\begin{cases}
										\frac{\mgb{2\sigma^2}\left(c_1 + \frac2L \tilde{r}(\tilde P)\right)}{\theta}
										\sa{\displaystyle\sum_{j=1}^{K}}\log\left(\frac{
											c_1 + \frac2L \tilde{r}(\tilde P)
											+ r~\theta 
											\sa{a_{j,K}} }{
											c_1 + \frac2L \tilde{r}(\tilde P)
											- r~\theta 
											\sa{a_{j,K}} }\right), & \text{if }\sqrt{\theta}\,\overline{H}_\infty < 1,\\
										+\infty & \mbox{else},
									\end{cases}
									$$
									\beq  
									\quad 
									a_{j,K} := \frac{1-\lambda_+^{K-j+1}}{1-(p+q)}  \quad \mbox{for} 
									\quad \mgbis{j =1, 2, \dots, K},
									\label{def-a-j-k}
									\eeq 
									\sa{and $V_{P,c_1}$ is as in \eqref{eq-lyap-gmm-general} with $P = \tilde P \otimes \mathsf{I}_d$. }
								\end{theorem}
								\begin{proof}\mgbis{The proof is 
										\sa{provided in} Appendix~\ref{proof-risk-sensitive-cost-bound-GMM}}.
								\end{proof}
								\vspace{0.2in}
								As a corollary of this theorem, in the next result, we obtain bounds on the expected suboptimality for the averaged iterates.
								\begin{corollary} 
									\sa{Under the premise} of \cref{thm-str-cvx-risk-cost-bound}, \sa{for any given $K\geq 0$,} the expected suboptimality of the averaged iterate $\bar{x}_K:=\frac{x_0+x_1+\dots + x_K}{K+1}$ satisfies
									\begin{eqnarray}
										\mathbb{E}[f(\bar{x}_K)-
										\sa{f_*}]
										&\leq& R_K(0)\\
										&\leq& \overline R_{K}(0) := \lim_{\theta \downarrow 0} \overline R_{K}(\theta) \\
										&=&
										\sa{\frac{\left(c_1 + \frac2L \tilde{r}(\tilde P)\right)^{-1}}{K+1}}
										\min\left\{\frac{1}{J_{p,q}} \bar c(K,0), \bar c(K+1,0)\right\}, \qquad \label{ineq-ergodic-subopt}
									\end{eqnarray}
									where 
									\begin{align}\bar{c}(K,0) 
										\sa{:=} \left( \frac{1 - \lambda_+^{K+1}}{1 - \lambda_+} - 1 +J_{p,q} \right) V_{P,c_1}(\xi_0) + 4\sigma^2 r\sa{\sum_{j=1}^{K} a_{j,K}}.\label{def-barc-K-at-zero}
									\end{align}
								\end{corollary}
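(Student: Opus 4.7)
The plan is to derive the bound in three steps: first, reduce the quantity of interest to $R_K(0)$ via convexity; second, pass the risk bound of \cref{thm-str-cvx-risk-cost-bound} to the risk-neutral limit $\theta\downarrow 0$; third, compute this limit in closed form by Taylor-expanding the $\log$ term in $\hat c(K,\theta)$.

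For the first inequality, I would apply Jensen's inequality to the convex function $f$: since $\bar{x}_K$ is a convex combination of $x_0,\ldots,x_K$, we have $f(\bar x_K)\le\tfrac{1}{K+1}\sum_{k=0}^K f(x_k)$, hence
\[
\mathbb{E}[f(\bar x_K)-f_*]\le \mathbb{E}\!\left[\tfrac{S_K}{K+1}\right]=R_K(0),
\]
where the last equality follows from \eqref{def-risk-param-zero}, and $R_K(0)$ is finite since $S_K$ is integrable under \cref{assump-exp-moment-det-additive}.

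For the second inequality, I would use the fact that $R_K(\theta)$ is monotone non-decreasing in $\theta$ (a standard property of scaled cumulant-generating functions: by Jensen's inequality applied to the convex map $u\mapsto u^{\theta'/\theta}$ for $\theta'\ge\theta>0$, or equivalently by monotonicity of $L^p$ norms). Combined with the bound $R_K(\theta)\le\overline R_K(\theta)$ from \cref{thm-str-cvx-risk-cost-bound}, valid for every $\theta\in(0,1/\overline H_\infty^{\,2})$, taking $\theta\downarrow 0^+$ yields $R_K(0)\le\lim_{\theta\downarrow 0}\overline R_K(\theta)=:\overline R_K(0)$.

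The main technical step is then to evaluate this limit. The only $\theta$-dependent factor in $\overline R_K(\theta)$ is $\hat c(K,\theta)$; all other factors in \eqref{def-risk-bound-bar-RK} are continuous at $\theta=0$. Writing $M:=c_1+\tfrac{2}{L}\tilde r(\tilde P)$, I would Taylor-expand each summand as
\[
\log\!\left(\frac{M+r\theta a_{j,K}}{M-r\theta a_{j,K}}\right)=\frac{2r\theta a_{j,K}}{M}+O(\theta^{3}),
\]
(valid because $r\theta a_{j,K}/M\to 0$ as $\theta\to 0$, uniformly in $j\in\{1,\ldots,K\}$ since $a_{j,K}$ is bounded for fixed $K$). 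Multiplying by the prefactor $2\sigma^2 M/\theta$ and summing over $j$ gives
\[
\lim_{\theta\downarrow 0}\hat c(K,\theta)=4\sigma^2 r\sum_{j=1}^{K}a_{j,K},
\]
which matches the expression for $\bar c(K,0)$ in \eqref{def-barc-K-at-zero} once combined with the $V_{P,c_1}(\xi_0)$ term that is independent of $\theta$. Substituting into \eqref{def-risk-bound-bar-RK} yields \eqref{ineq-ergodic-subopt}. The only subtle point is to ensure that for any sufficiently small $\theta>0$ the condition $\sqrt{\theta}\,\overline H_\infty<1$ holds, so that the finite branch of $\overline R_K(\theta)$ is used in the limit; this is immediate since $\overline H_\infty<\infty$ by assumption.
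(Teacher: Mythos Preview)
Your proof is correct and follows essentially the same approach as the paper: convexity of $f$ for the first inequality, passing the bound $R_K(\theta)\le\overline R_K(\theta)$ to the limit $\theta\downarrow 0$, and computing $\lim_{\theta\downarrow 0}\hat c(K,\theta)=4\sigma^2 r\sum_{j=1}^K a_{j,K}$ via the first-order expansion of the $\log$ term. The only cosmetic difference is that you invoke monotonicity of $\theta\mapsto R_K(\theta)$ to justify the limit passage, whereas the paper uses the identity $R_K(0)=\lim_{\theta\downarrow 0}R_K(\theta)$ from \eqref{def-risk-param-zero} together with right-continuity of $\overline R_K$; both routes are equally valid and lead to the same conclusion.
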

								\begin{proof} From the Taylor expansion of \sa{$R_{K}(\theta)$ around $\theta=0$,  
										using \eqref{def-risk-param-zero} we have:}
									\[
									\E[f(\bar{x}_K)-
									\sa{f_*}] \leq
									\frac1{K+1}\sum_{k=0}^K \E[f(x_k) - 
									\sa{f_*}]
									=
									R_{K}(0)=
									\lim_{\theta \downarrow 0}
									R_{K}(\theta),
									\]
									\mgbis{where we used the convexity of $f$ in the above inequality.} We note that for $\theta>0$ sufficiently small, $R_K(\theta)$ and $\overline R_{K}(\theta)$ are continuous in $\theta$ and they satisfy $R_{K}(\theta) \leq \overline R_{K}(\theta)$ by \cref{thm-str-cvx-risk-cost-bound}. Furthermore, it is straightforward to check that the limit 
									$\overline{R}_K(0):=\lim_{\theta\downarrow 0} \overline{R}_K(\theta)$
									exists and admits the formula \sa{in} \eqref{ineq-ergodic-subopt}, 
									\sa{which follows from} \eqref{def-risk-bound-bar-RK} and the fact that 
									$$\lim_{\theta \downarrow 0} \hat c(K,\theta) = 4\sigma^2 r\sa{\sum_{j=1}^{K} a_{j,K}}.$$ 
									Furthermore, $\overline{R}_K(\theta)$ is \sa{right-continuous} at $\theta=0$; 
									therefore, it is necessarily the case that $\E[f(\bar{x}_K)-
									\sa{f_*}] = R_K(0) \leq \overline{R}_K(0) =\lim_{\theta \downarrow 0} \overline R_{K}(\theta)$ and we conclude. 
								\end{proof}
								\begin{corollary} For 
									\sa{GD}, \sa{given any $K\geq 0$ and $\alpha\in (0, \frac{2}{L})$,} the risk-sensitive cost 
									admits the following bound:
									$$ R_{K}(\theta) \leq \min\left\{\overline R_{K}^{(1)}(\theta), \overline R_{K}^{(2)}(\theta)\right\} \quad \mbox{for} \quad \theta \in \left(0,\frac{1}{\overline{H}_\infty^2}\right)$$ 
									where
									$$\small \overline{H}_\infty=  \begin{cases} 
										\frac{1}{\sqrt{2\mu}} & \mbox{if} \quad 0 < \alpha \leq \frac{1}{L}, \\
										\frac{1}{\sqrt{2\mu}}\frac{\alpha L}{2-\alpha L} & \mbox{if} \quad \frac{1}{L} <\alpha \leq \frac{2}{L+\sqrt{L\mu}},\\
										\frac{1}{\sqrt{2\mu}}  \sqrt{\kappa} & \mbox{if} \quad  \frac{2}{L+\sqrt{L\mu}} < \alpha \leq \frac{2}{L+\mu}, \\
										\frac{\alpha\sqrt{L}}{\sqrt{2}(2-\alpha L)}  & \mbox{if} \quad  \frac{2}{L+\mu}<\alpha < \frac{2}{L}, \\
									\end{cases}
									$$
									\[
									\overline R_{K}^{(1)}(\theta)
									\coloneqq 
									\frac{L}{2(\ys{K+1})}
									\left(\frac{1-[\rho_{GD}(\alpha)]^{K+1}}{1-\rho_{GD}(\alpha)} \|x_0-x_*\|^2 + \ys{\frac{4\sigma^2}{L\theta}} \sum_{j=1}^{K} \sa{\log\left(\frac{2 + \frac{\theta L\alpha^2}{1-\rho_{GD}(\alpha)}~a_{j,K}^{(1)}}{2 - \frac{\theta L \alpha^2}{1-\rho_{GD}(\alpha)}~a_{j,K}^{(1)}}\right)}\right),
									\]
									with 
									\sa{$a_{j,K}^{(1)}=\frac{1-[\rho_{GD}(\alpha)]^{K-j+1}}{1-\rho_{GD}(\alpha)}$ for $j=1,\ldots,K$},  \sa{$\rho_{GD}(\alpha)=\max\{\|1-\alpha\mu|,|1-\alpha L|\}$} and
									\begin{align*}
										\overline R_{K}^{(2)}(\theta)
										&\coloneqq 
										\frac{1}{\ys{K+1}}\left[
										\frac{1 - \left[1-2\mu\alpha(1-\frac{L\alpha}{2})+ \alpha\mu|1-\alpha L|s_{\scriptscriptstyle \mathrm{GD}}\right]^{K+1}}{2\mu\alpha(1-\frac{L\alpha}{2})-\alpha\mu|1-\alpha L|s_{\scriptscriptstyle \mathrm{GD}}}\left[f(x_0) - 
										\sa{f_*}\right] \right.\\
										&\quad\quad\quad\quad\quad\quad\quad\quad\quad\quad\quad\quad
										\left. 
										+\ys{\frac{2\sigma^2}\theta} \sum_{j=1}^{K} 
										\log\left(
										\frac{1 + \theta a_{j,K}^{(2)}\alpha\left(\frac{L}{2}\alpha + \frac{|1-\alpha L|}{2s_{\scriptscriptstyle \mathrm{GD}}}\right)}{1 - \theta a_{j,K}^{(2)}\alpha\left(\frac{L}{2}\alpha + \frac{|1-\alpha L|}{2s_{\scriptscriptstyle \mathrm{GD}}}\right)}
										\right)
										\right],
									\end{align*}
									\sa{with}  
									$a_{j,K}^{(2)} = \frac{1 - \left[1-2\mu\alpha(1-\frac{L\alpha}{2})+ \alpha\mu|1-\alpha L|s_{\scriptscriptstyle \mathrm{GD}}\right]^{K-j+1}}{2\mu\alpha(1-\frac{L\alpha}{2})-\alpha\mu|1-\alpha L|s_{\scriptscriptstyle \mathrm{GD}}}$ \sa{for $j=1,\ldots,K$},  $s_{\scriptscriptstyle \mathrm{GD}} = 1$ for \sa{$\alpha\in [0, \frac{1}{L}]$} 
									and $s_{\scriptscriptstyle \mathrm{GD}} = \frac{2-\alpha L}{\alpha L}$ for \sa{$\alpha\in (\frac{1}{L}, \frac{2}{L})$.} 
								\end{corollary}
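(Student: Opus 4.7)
The plan is to apply Theorem~\ref{thm-str-cvx-risk-cost-bound} twice to gradient descent (which has $\beta=\nu=0$ and $y_k=x_k$), each time with a different admissible tuple $(\rho_0,\rho_1,\rho_2,\rho_3,a,b,c_1,\tilde P)$ that certifies hypotheses~(i)--(ii) of Theorem~\ref{thm-hinfty-bound}. These two tuples are the explicit algebraic solutions to the $4\times 4$ matrix inequality constructed in Corollary~\ref{cor-GD-params-MI}, and correspond to the two classical Lyapunov functions used in the analysis of GD: the squared-distance functional $V_{P,c_1}(\xi_k)=\tfrac{L}{2}\|x_k-x_*\|^2$ (obtained by $c_1=0$ and $\tilde P=\mathrm{diag}(L/2,0)$), and the function-value functional $V_{P,c_1}(\xi_k)=f(x_k)-f_*$ (obtained by $c_1=1$ and $\tilde P=0$). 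In both cases $c_1+\tfrac{2}{L}\tilde r(\tilde P)=1$, which simplifies all downstream substitutions into Theorem~\ref{thm-str-cvx-risk-cost-bound}.

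For the distance-based Lyapunov, the one-step MI encodes the contraction $\|x_{k+1}-x_*\|^2\le \rho_{GD}(\alpha)\|x_k-x_*\|^2 + \bigl(L\alpha^2/(1-\rho_{GD}(\alpha))\bigr)\|w_{k+1}\|^2$, obtained from $L$-smoothness/$\mu$-strong convexity after absorbing the cross term $-2\alpha\langle x_k-x_*-\alpha\nabla f(x_k),w_{k+1}\rangle$ by a Young inequality tuned so that the state-factor equals $\rho_{GD}(\alpha)$ rather than $\rho_{GD}(\alpha)^2$. This yields $q=0$, $\lambda_+=p=\rho_{GD}(\alpha)$, $J_{p,q}=1$, $r=L\alpha^2/(2(1-\rho_{GD}(\alpha)))$, and $a_{j,K}=a_{j,K}^{(1)}$, so that Theorem~\ref{thm-hinfty-bound} gives $\overline H_\infty^{(1)}=\alpha\sqrt{L}/(\sqrt{2}(1-\rho_{GD}(\alpha)))$. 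Plugging these values into the first branch of the minimum in Theorem~\ref{thm-str-cvx-risk-cost-bound} reproduces $\overline R_K^{(1)}(\theta)$ exactly. For the function-value Lyapunov, the one-step MI is obtained by combining the descent lemma with a weighted Young-type splitting whose parameter $s_{GD}$ takes two forms depending on the sign of $1-\alpha L$, producing $q=0$, $p=1-2\mu\alpha(1-L\alpha/2)+\alpha\mu|1-\alpha L|s_{GD}$, $r=\alpha(L\alpha/2+|1-\alpha L|/(2 s_{GD}))$, and $a_{j,K}=a_{j,K}^{(2)}$; a direct computation of $r/(1-p)$ in the sub-cases $\alpha\le 1/L$ and $\alpha>1/L$ collapses to $\overline H_\infty^{(2)}=1/\sqrt{2\mu}$ and $\alpha L/(\sqrt{2\mu}(2-\alpha L))$, respectively. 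Substitution into Theorem~\ref{thm-str-cvx-risk-cost-bound} yields $\overline R_K^{(2)}(\theta)$.

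Since each configuration independently certifies an upper bound on $R_K(\theta)$ whenever $\sqrt{\theta}\,\overline H_\infty^{(i)}<1$, the pointwise minimum $\min\{\overline R_K^{(1)}(\theta),\overline R_K^{(2)}(\theta)\}$ is the sharper certificate and is finite precisely when $\sqrt{\theta}\,\min\{\overline H_\infty^{(1)},\overline H_\infty^{(2)}\}<1$. A short case analysis at the crossover points $\alpha\in\{1/L,\,2/(L+\sqrt{L\mu}),\,2/(L+\mu)\}$ shows that $\min\{\overline H_\infty^{(1)},\overline H_\infty^{(2)}\}$ coincides with the four-piece expression for $\overline H_\infty$ stated in the corollary: Configuration~2 is smaller on $(0,2/(L+\sqrt{L\mu})]$ (matching the first two branches), while Configuration~1 is smaller on $(2/(L+\sqrt{L\mu}),2/L)$ (matching the last two branches, via the two sub-regimes of $\rho_{GD}(\alpha)$). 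The main technical obstacle is verifying tightness of the $4\times 4$ MI under the function-value configuration, since the parameter $s_{GD}$ and the factor $|1-\alpha L|$ must be chosen so that the smoothness and strong-convexity inequalities combine with equality; this requires careful bookkeeping across the threshold $\alpha=1/L$ where $s_{GD}$ switches form. The distance-based configuration, by contrast, amounts to recording the noise-dependent remainder in the standard one-step GD contraction stated above.
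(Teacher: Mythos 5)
Your proposal is correct and follows essentially the same route as the paper: instantiate \cref{thm-str-cvx-risk-cost-bound} twice with the two certificates from \cref{cor-GD-params-MI} (distance-based with $c_1=0$, and function-value-based with $c_1=1$, $\tilde P=0$), note $q=0$, $\lambda_+=p$, $J_{p,q}=1$, and take the pointwise minimum. The only deviation is your renormalization of the first configuration to $\tilde P=\mathrm{diag}(L/2,0)$ with $r=L\alpha^2/(2(1-\rho_{GD}(\alpha)))$ in place of the paper's $\tilde P=\mathsf{I}_2$, $r=\alpha^2/(1-\rho_{GD}(\alpha))$; since only the ratio $r/\bigl(c_1+\tfrac{2}{L}\tilde r(\tilde P)\bigr)$ enters the final bound, this is immaterial and yields the identical $\overline R_K^{(1)}(\theta)$ and $\overline{H}_\infty$.
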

								\begin{proof} \Cref{cor-GD-params-MI} shows that the conditions of \cref{thm-str-cvx-risk-cost-bound} are satisfied for GD with $\alpha\in (0,2/L)$ if we take
									$$p=\rho_{GD}(\alpha)=\max\{|1-\alpha\mu|,|1-\alpha L|\},\quad q=0, \quad r = \frac{\alpha^2}{1-\rho_{GD}(\alpha)}, \quad \tilde P=\mathsf{I}_2, \quad c_1 = 0,$$
									which corresponds to \sa{the Lyapunov function} $V_{P,c_1}(\xi_k)=\|x_k-x_*\|^2$. 
									Then, $\lambda_+ = p,~J_{p,q}=1$, $\tilde{r}(\tilde P)= 1$ and $\min\{\bar c(K,\theta), \bar c(K+1,\theta)\} = \bar c(K,\theta)$. Invoking \cref{thm-str-cvx-risk-cost-bound} implies the desired upper bound $\overline R_{K}^{(1)}(\theta)$.
									Similarly, by \cref{cor-GD-params-MI}, we can also take
									$$p=1-2\mu\alpha\left(1-\frac{L\alpha}{2}\right)+ \alpha\mu|1-\alpha L|s_{\scriptscriptstyle \mathrm{GD}}, ~~ q=0, ~~ r = \alpha\left(\frac{L}{2}\alpha + \frac{|1-\alpha L|}{2s_{\scriptscriptstyle \mathrm{GD}}}\right), ~~ \tilde{P}=0, ~~ c_1 = 1,$$
									within \cref{thm-str-cvx-risk-cost-bound}, which corresponds to the Lyapunov function $V_{P,c_1}(\xi_k)=f(x_k)-
									\sa{f_*}$ \sa{and together with} $\lambda_+=p$, $J_{p,q}=1$ and $\tilde{r}(\tilde{P})=0$ leads to the desired upper bound $\overline R_{K}^{(2)}(\theta)$.
								\end{proof}
								
								\begin{corollary}
									For NAG with $\alpha\in(0,1/L]$ and $\beta = \nu = \frac{1-\sqrt{\alpha\mu}}{1+\sqrt{\alpha \mu}}$, the bound \eqref{def-risk-bound-bar-RK} in \cref{thm-str-cvx-risk-cost-bound} holds with constants 
									{\small
										\begin{eqnarray*}
											p
											=
											\left[1 - \sqrt{\alpha\mu}\right] + 
											\left[\frac{2}{s_1}\left(\frac{\alpha}{4(1+\sqrt{\alpha\mu})^2}\right) \right] +
											\left[ 
											\frac{2}{\mu}
											\left(\frac{\alpha^2\mu^2+2\alpha\mu+\alpha\mu(1-\sqrt{\alpha\mu})}{4(1+\sqrt{\alpha\mu})^2}\right)
											\right] +
											\frac{4L^3\alpha^2}{2\mu s_2}(1+\nu)^2\,, 
										\end{eqnarray*}
										\begin{eqnarray*}
											q 
											= \frac{2}{s_1}\left(\frac{\alpha(1-\sqrt{\alpha\mu})}{4(1+\sqrt{\alpha\mu})^2}\right) + \frac{4L^3\alpha^2\nu^2}{2\mu s_2}\,, 
											\quad
											r 
											=
											a + \alpha^2 \left(\frac{c_1 L}2 +\tilde P_{11}\right)
											=
											s_1 + \frac{L\alpha^2(2+s_2)}{2s_2} + \frac{\alpha}2,
										\end{eqnarray*}
										$$c_1 = 1, \quad \tilde{r}(\tilde P)=0,$$
									}
									where 
									{\small
										\[
										s_1 = \frac{2(5-2\sqrt{\alpha\mu}+\alpha\mu)\ys{\sqrt{\alpha}}}{\sqrt{\mu}(1+\sqrt{\alpha\mu})^2}, \quad s_2 = \frac{8L^3\alpha\ys{\sqrt{\alpha}}}{\mu\sqrt{\mu}(1+\sqrt{\alpha\mu})^2}(4+(1-\sqrt{\alpha\mu})^2).
										\]
									}
								\end{corollary}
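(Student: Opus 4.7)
The plan is to specialize Theorem \ref{thm-str-cvx-risk-cost-bound} to NAG with the given parameterization by inheriting an explicit solution to the $4\times 4$ matrix inequality of Theorem \ref{thm-hinfty-bound} that is already available in the appendix as Corollary \ref{cor-NAG-params-MI}. That corollary provides a tuple $(\rho_0,\rho_1,\rho_2,\rho_3,a,b,c_1,c_0,\tilde P)$ certifying conditions (i)-(ii) of Theorem \ref{thm-hinfty-bound} for NAG-beta-opt (with $\alpha\in(0,1/L]$ and $\beta=\nu=(1-\sqrt{\alpha\mu})/(1+\sqrt{\alpha\mu})$). Reading those values off, and in particular observing that one may pick $c_1=1$ together with a $\tilde P$ whose bottom-right block vanishes so that $\tilde r(\tilde P)=0$, places us directly inside the hypothesis of Theorem \ref{thm-str-cvx-risk-cost-bound}.

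Next, I would substitute the identified certificates into the definitions
\[
p=\rho_0^2+c_1\rho_1^2+\rho_2^2+\tfrac{4b(1+\nu)^2L^2}{\mu}c_1,\quad q=\rho_3^2+\tfrac{4b\nu^2L^2}{\mu}c_1,\quad r=a+\alpha^2\bigl(\tfrac{c_1L}{2}+\tilde P_{11}\bigr),
\]
and collect terms. The $\nu$-dependent pieces of $p$ and $q$ come from the block of the MI solution that controls the coupling of the extrapolation step through $y_k$, and their dependence on the factor $(1+\nu)^2$ versus $\nu^2$ matches the split between $p$ and $q$; defining $s_2$ to absorb the specific scaling $b=L\alpha^2/(2s_2)$ reproduces the stated last summands of $p$ and $q$. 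The remaining terms of $p$, namely $1-\sqrt{\alpha\mu}$ plus the two contributions weighted by $2/s_1$, correspond to the dissipation coefficients of the Lyapunov inequality for NAG, where the choice of $s_1$ is the free parameter selected so that the factor $\tfrac{\alpha}{4(1+\sqrt{\alpha\mu})^2}$ and its companion term involving $(1-\sqrt{\alpha\mu})$ appear in the canonical form. The expression for $r$ follows analogously, combining the $a$-term from the MI with $\alpha^2\bigl(\tfrac{L}{2}+\tilde P_{11}\bigr)=\tfrac{L\alpha^2(2+s_2)}{2s_2}+\tfrac{\alpha}{2}$, which fixes both $\tilde P_{11}$ and the constant $a=s_1$.

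Once these identifications are made, $p+q<1$ is inherited from the noiseless convergence rate estimate that the MI of Theorem \ref{thm-hinfty-bound} implies, so $\overline{H}_\infty<\infty$ and all conditions required by Theorem \ref{thm-str-cvx-risk-cost-bound} are fulfilled. The bound \eqref{def-risk-bound-bar-RK} then applies verbatim with the substituted constants, finishing the proof. The main obstacle is the algebraic bookkeeping: one must verify that the specific free parameters $s_1,s_2$ chosen in Corollary \ref{cor-NAG-params-MI} align exactly with the factorizations written in the statement, a routine but delicate substitution whose structure is dictated by the split between the $(1+\nu)^2$ and $\nu^2$ terms on the momentum side and the $\sqrt{\alpha\mu}$-dependent terms on the gradient side.
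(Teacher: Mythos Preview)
Your approach is exactly the paper's: invoke Corollary~\ref{cor-NAG-params-MI} to supply the MI certificate, then plug the resulting $(p,q,r,c_1,\tilde P)$ into Theorem~\ref{thm-str-cvx-risk-cost-bound}. That high-level plan is correct.

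However, there is a concrete error in your justification of $\tilde r(\tilde P)=0$. You claim that ``the bottom-right block vanishes,'' but for NAG-beta-opt the matrix supplied by Corollary~\ref{cor-NAG-params-MI} is
\[
\tilde P = \frac{1}{2\alpha}\begin{bmatrix}1\\-(1-\sqrt{\alpha\mu})\end{bmatrix}\begin{bmatrix}1 & -(1-\sqrt{\alpha\mu})\end{bmatrix},
\]
so $\tilde P_{22}=\tfrac{(1-\sqrt{\alpha\mu})^2}{2\alpha}\neq 0$ whenever $\alpha\mu<1$ (which is the generic case since $\alpha\le 1/L<1/\mu$). If $\tilde P_{22}$ did vanish, the definition \eqref{def-r-P} would give $\tilde r(\tilde P)=\tilde P_{11}=\tfrac{1}{2\alpha}\neq 0$, which is the opposite of what you want. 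The correct reason $\tilde r(\tilde P)=0$ is that $\tilde P$ is \emph{rank one}: writing $\tilde P=vv^\top$ with $v_2\neq 0$ gives $\tilde P_{11}-\tilde P_{12}^2/\tilde P_{22}=v_1^2-(v_1v_2)^2/v_2^2=0$, i.e., the Schur complement vanishes. This is the ``short computation'' the paper refers to. Your identification $a=s_1$ in the $r$-formula also needs rechecking against the statement's expression $r=s_1+\tfrac{L\alpha^2(2+s_2)}{2s_2}+\tfrac{\alpha}{2}$, since $\alpha^2(\tfrac{L}{2}+\tilde P_{11})=\tfrac{L\alpha^2}{2}+\tfrac{\alpha}{2}$ does not by itself produce the $s_2$-dependent piece; the value of $a$ from the MI solution carries additional terms.
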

								\begin{proof} For NAG with this choice of step size \sa{$\alpha$} and momentum \sa{parameters $\beta=\nu$}, by \cref{cor-NAG-params-MI}, we observe that the conditions of \cref{thm-str-cvx-risk-cost-bound} \sa{hold} for this choice of $p,q,r$ values \sa{with $c_1=1$ and} 
									$$
									\tilde P = \frac1{2\alpha}\begin{bmatrix}1 \\ -(1-\sqrt{\alpha\mu})\end{bmatrix}\begin{bmatrix}1 & -(1-\sqrt{\alpha\mu})\end{bmatrix}.
									$$
									A short computation reveals that for this choice of $\tilde{P}$, \cref{thm-str-cvx-risk-cost-bound} is applicable with $\tilde r(\tilde P)=0$, and we conclude. 
								\end{proof}
								We \sa{next} leverage \cref{thm-str-cvx-risk-cost-bound} \sa{to 
									bound} the risk-sensitive index in the following corollary.
								\begin{corollary}\label[corollary]{cor-asymptotic-risk-index-bound}
									Under the 
									\sa{premise} of \cref{thm-str-cvx-risk-cost-bound}, for $\theta>0$ it holds that
									\begin{equation}  R(\theta) 
										\leq 
										\overline R(\theta)\coloneqq \mgbis{\limsup_{K\to\infty} \overline R_{K}(\theta)=}
										\begin{cases*}
											\frac{2\sigma^2}{\theta} \log\left(\frac{1 + \theta (\overline{H}_\infty)^2}{1 - \theta(\overline{H}_\infty)^2 }\right), & $\sqrt{\theta}~\overline{H}_\infty < 1,$ \\
											+\infty, & otherwise.
										\end{cases*}\label{eq-R-bar-theta}
									\end{equation}
									\mgbis{Furthermore, 
										\beq \limsup_{K\to\infty}\mathbb{E}[f(\bar{x}_K)-
										\sa{f_*}] \leq \lim_{\theta\downarrow 0}\overline{R}(\theta)=4\sigma^2 (\overline{H}_\infty)^2.\label{ineq-lim-suboptimality-with-bias}
										\eeq}
								\end{corollary}
								\begin{proof} By \cref{thm-str-cvx-risk-cost-bound}, \mgbis{for $\theta > 0$ satisfying $\sqrt{\theta} \,\overline{H}_\infty < 1$}, we have
									{\small
										\beq  
										\limsup_{K\to\infty} \overline R_{K}(\theta) 
										&=&
										\ys{\frac{1}{c_1 + \frac2L \tilde{r}(\tilde P)}}~\limsup_{K\to\infty} \frac{1}{K+1}\min\left\{\frac{1}{J_{p,q}} \ys{\bar c\left(K, \frac{\theta}{J_{p,q}}\right), \bar c(K+1, \theta)}\right\} \label{ineq-bar-c-K-largeK-limit-1}\\
										&=&
										\mgbis{\frac{1}{c_1 + \frac2L \tilde{r}(\tilde P)}}~\limsup_{K\to\infty} \frac{1}{K+1}   \bar c(K+1, \theta)\label{ineq-bar-c-K-largeK-limit-2}\\
										&=&\mgbis{\frac{\mgb{2\sigma^2}}{\theta}
											\limsup_{K\to\infty}\sa{\displaystyle\frac{1}{K+1}{}\sum_{j=1}^{K+1}}\log\left(\frac{
												c_1 + \frac2L \tilde{r}(\tilde P)
												+ r~\theta 
												\sa{a_{j,K+1}} }{
												c_1 + \frac2L \tilde{r}(\tilde P)
												- r~\theta 
												\sa{a_{j,K+1}} }\right)}  \label{ineq-bar-c-K-largeK-limit-3}\\
										&=& \frac{\mgb{2\sigma^2}}{\theta}
										\limsup_{K\to\infty}\sa{\displaystyle\frac{1}{K+1}{}\sum_{j=1}^{K+1}}\log\left(\frac{
											c_1 + \frac2L \tilde{r}(\tilde P)
											+ r~\theta 
											\sa{a_{1,j}} }{
											c_1 + \frac2L \tilde{r}(\tilde P)
											- r~\theta 
											\sa{a_{1,j}} }\right)\,,\label{ineq-bar-c-K-largeK-limit}
										\eeq}%
									{where we used the definition of $\bar{c}(K+1,\theta)$ and $a_{j,K+1}$ given in \cref{thm-str-cvx-risk-cost-bound}.  Since the sequence $\{a_{1,j}\}_{j\geq 1}$ is non-decreasing in $j$ admitting $\lim_{j\to\infty}a_{1,j} = \frac{1}{1-(p+q)}$; the sequence $\ell_j:=\log\left(\frac{
											c_1 + \frac2L \tilde{r}(\tilde P)
											+ r~\theta 
											a_{1,j} }{
											c_1 + \frac2L \tilde{r}(\tilde P)
											- r~\theta 
											a_{1,j} }\right)$ is also non-decreasing and admits the limit
										{\small
											$$\ell_{\infty} := \lim_{j\to\infty} \ell_j =  \log\left(\frac{
												c_1 + \frac2L \tilde{r}(\tilde P)
												+ r~\theta 
												1/(1-(p+q)) }{
												c_1 + \frac2L \tilde{r}(\tilde P)
												- r~\theta 
												1/(1-(p+q)) }\right) = \log\left(\frac{1 + \theta (\overline{H}_\infty)^2}{1 - \theta(\overline{H}_\infty)^2 }\right),$$}%
										using the definition of $\overline{H}_\infty$ from \cref{thm-str-cvx-risk-cost-bound}. Hence we conclude from \eqref{ineq-bar-c-K-largeK-limit} that
										{\small
											\beqs
											\overline R(\theta) = \limsup_{K\to\infty} \overline R_{K}(\theta) 
											&=&
											\frac{2\sigma^2}{\theta} \ell_\infty
											=
											\frac{2\sigma^2}{\theta} \log\left(\frac{1 + \theta (\overline{H}_\infty)^2}{1 - \theta(\overline{H}_\infty)^2 }\right).\eeqs}}%
									Since {$R(\theta)=\limsup_{K\to\infty} R_K(\theta) \leq \limsup_{K\to\infty} \overline R_{K}(\theta)$ and $R_K(\theta)=+\infty$ for $\sqrt{\theta} \,\overline{H}_\infty \geq 1$ for $\forall K$, we obtain \eqref{eq-R-bar-theta} as desired. With similar computations, taking limit superior on both sides in \eqref{ineq-ergodic-subopt} with respect to $K$ and using \eqref{def-barc-K-at-zero}, we obtain
										$ \limsup_{K\to\infty}\mathbb{E}[f(\bar{x}_K)-
										\sa{f_*}] \leq 4\sigma^2 (\overline{H}_\infty)^2.
										$
										Finally, the fact that $\lim_{\theta\downarrow 0}\overline{R}(\theta)=4\sigma^2 (\overline{H}_\infty)^2$ follows simply by taking limit as $\theta\to 0$ in \eqref{eq-R-bar-theta}.}
								\end{proof}
								{
									\begin{remark}  
										For momentum methods with constant step size and unbiased stochastic gradient 
										\sa{estimates}, ergodic averages converge to a neighborhood controlled by the variance proxy $\sigma^2$. For example, for NAG-beta-opt (\sa{using parameters as} in \cref{tab:algo-params}) with i.i.d.\ noise $w_{k+1} \sim \mathcal{N}(0,\tfrac{\sigma^2}{d})$, \cite[Cor.~4.8]{siopt-ragm} gives 
										$\limsup_{K\to\infty}\mathbb{E}[f(\bar{x}_K)-
										\sa{f_*}] \leq \sigma^2 \tfrac{\sqrt{\alpha}}{2\sqrt{\mu}}(1+\alpha L)$ for $\alpha \in (0,1/L]$, which vanishes as $\alpha \to 0$. In contrast, under biased noise the neighborhood is generally bounded away from zero, owing to a potentially non-vanishing worst-case drift. From \eqref{ineq-lim-suboptimality-with-bias}, the asymptotic error is governed by the \sa{bound on} $H_\infty$-norm. For NAG-beta-opt, \sa{under the \cref{assump-exp-moment-det-additive}}, by \cref{cor-NAG-params-MI} in the appendix, we have
										$\overline{H}_\infty = \tfrac{2\sqrt{5}}{\sqrt{\mu}} + \mathcal{O}(\sqrt{\alpha})$ as $\alpha \to 0$, and by \eqref{ineq-lim-suboptimality-with-bias} this yields
										$\limsup_{K\to\infty}\mathbb{E}[f(\bar{x}_K)-
										\sa{f_*}] \leq \sigma^2\bigl(\tfrac{80}{\mu} + \mathcal{O}(\sqrt{\alpha})\bigr)$, 
										and the right-hand side remains strictly positive as $\alpha\to 0$. Corollary \ref{cor-asymptotic-risk-index-bound} shows that momentum algorithms with smaller \sa{bound $\overline H_\infty$} 
										exhibit tighter bounds on the asymptotic (suboptimality) bias, i.e., $\limsup_{K\to\infty}\mathbb{E}[f(\bar{x}_K)-
										\sa{f_*}]$. Similar observations can be made for GD, HB or GMM. To our knowledge, this connection to the $H_\infty$-norm is entirely new for momentum methods.
									\end{remark}
								}
								For quadratic functions, we obtained large deviation results which obtains both the upper bound and the lower bound in \eqref{def-LDP} through an exact computation of the risk-sensitive index as $K\to \infty$. Beyond quadratics, we have only bounds on the risk-sensitive index. In the following result, we leverage this bound to obtain a large-deviation upper bound and high probability bounds for finite $K$. 
								\begin{theorem}\label[theorem]{theorem-rate-function-finite-time} Under the premise of Theorem \ref{thm-str-cvx-risk-cost-bound}, 
									consider the averaged iterates $\bar{x}_K = \frac{\sum_{j=0}^{K} x_j}{K+1}$. 
									Then, for any \sa{$K\geq 1$} given and for any \sa{$\theta\in[0,~1/\overline{H}_\infty^2)$} 
									and $t\geq 0$, we have 
									{\small
										\beq \mathbb{P}\left( f(\bar{x}_K) - 
										\sa{f_*}\geq t\right) \leq \mathbb{P}\left(\frac{S_K}{K+1}\geq t\right)\leq
										\exp\left(-(K+1)\frac{\theta}{2\sigma^2}\left(t-\overline{R}_{K}(\theta) \right)\right).
										\label{high-proba-bound} 
										\eeq
									}Furthermore, for any $t\geq 0$,
									{\small
										\beq \mathbb{P}\left( f(\bar{x}_K) - 
										\sa{f_*}\geq t\right) \leq \exp\left(-(K+1) \bar{I}_K(t)\right),
										\label{large-deviation-bound}
										\eeq 
									}where
									{\small
										\beq
										\bar{I}_K(t)\sa{:=}\sup_{0\leq \theta<1/\overline{H}_\infty^2} \frac{\theta}{2\sigma^2}\left(t-\overline{R}_{K}(\theta)\right)\geq 
										\max\left(  \check c_K^{(1)}  \Psi\left(\frac{t}{ \check c_K^{(1)} }, \check a_K^{(1)} ,  \check b_K^{(1)} \right), \Psi\left(t, \check a_K^{(2)},  \check b_K^{(2)}\right)\right),
										\label{my-large-deviation-bound-2}
										\eeq 
									}
									with
									$$ \check a_K^{(1)} = \frac{\bigl(\tfrac{1-\lambda_+^{K+1}}{1-\lambda_+}-1+J_{p,q}\bigr)V(\xi_0)}{\bigg(c_1+\tfrac{2}{L}\tilde{r}(\tilde P)\bigg)K}, \quad \check b_K^{(1)} =\frac{(1-\lambda_+^K)(\overline{H}_\infty)^2}{J_{p,q}},
									\quad \check c_K^{(1)} = \frac{K}{(K+1)J_{p,q}}, 
									$$ 
									$$ \check a_K^{(2)} = \frac{\bigl(\tfrac{1-\lambda_+^{K+2}}{1-\lambda_+}-1+J_{p,q}\bigr)V(\xi_0)}{\bigg(c_1+\tfrac{2}{L}\tilde{r}(\tilde P)\bigg)(K+1)}, \quad \check b_K^{(2)}= (1-\lambda_+^{K+1})(\overline{H}_\infty)^2, 
									$$ 
									and 
									\beq \Psi(t,\check a,\check b)
									=
									\begin{cases}
										\displaystyle \frac{t-\check{a}}{2\sigma^2 \check b} \sqrt{1 - \frac{4\sigma^2 \check b}{t-\check a}} - \log\left( \frac{1 + \sqrt{1 - \frac{4\sigma^2 \check b}{t-\check a}}}{1 - \sqrt{1 - \frac{4\sigma^2 \check b}{t-\check a}}} \right), & \text{if} \quad t -\check a \geq 4\sigma^2 \check b; \\
										0, & \text{if}\quad 0\leq t -\check a < 4\sigma^2 \check b.
									\end{cases}
									\label{str-cvx-dual}
									\eeq
									
								\end{theorem}
								\begin{proof} The first inequality in \eqref{high-proba-bound} is a consequence of the convexity of $f$. {The second inequality is trivial for $\theta=0$, and for $(0,1/(\overline{H}_\infty)^2)$} it is \mgc{simply} a Chernoff-type bound.~More specifically, for $t\geq 0$, and $\theta \in (0,1/(\overline{H}_\infty)^2)$,
									\beqs 
									\mathbb{P}\left(\frac{S_K}{K+1}\geq t\right)
									&=&
									\mathbb{P}\left(\exp\left(\frac{\theta}{2\sigma^2}S_K\right)\geq \exp\left(\frac{\theta}{2\sigma^2}(K+1)t\right)\right)\\
									&\leq& 
									\exp\left(-\frac{\theta}{2\sigma^2}(K+1)t\right)
									\mathbb{E}\left(
									\exp\left(\frac{\theta}{2\sigma^2}S_K\right)
									\right)\\
									&\leq & 
									\exp\left(-(K+1)\frac{\theta}{2\sigma^2}\left(t-\overline{R}_{K}(\theta) \right)\right),
									\eeqs
									where we used $\overline{R}_{K}(\theta)\geq R_{K}(\theta)$. Then, \eqref{large-deviation-bound} is obtained by taking supremum over $\theta$.
									Finally, \sa{the lower bound on $\bar{I}_K(t)$ given in \eqref{my-large-deviation-bound-2}} is a consequence of Lemma \ref{lemma-finite-time-rate-function} in the appendix. 
									
								\end{proof}
								
								\sa{For} the large $K$ limit in the previous result, we can obtain the following large deviation bound. 
								\looseness=-1
								\begin{corollary} 
									\sa{Under the premise} of \cref{thm-str-cvx-risk-cost-bound}, it holds that
									{
										\beq 
										\limsup_{K \to \infty}
										\frac{1}{K} \log \mathbb{P}\left( f(\bar{x}_K) - 
										\sa{f_*} \geq t \right)
										\leq -\bar{I}(t), \quad 
										\label{ineq-str-cvx-rate-function-bd}
										\eeq
										where
										{\small
											\beq \bar{I}(t) \sa{:=} 
											\begin{cases}
												\displaystyle \frac{t}{2\sigma^2 (\overline{H}_\infty)^2} \sqrt{1 - \frac{4\sigma^2 (\overline{H}_\infty)^2}{t}} - \log\left( \frac{1 + \sqrt{1 - \frac{4\sigma^2 (\overline{H}_\infty)^2}{t}}}{1 - \sqrt{1 - \frac{4\sigma^2 (\overline{H}_\infty)^2}{t}}} \right), & \text{if} \quad t \geq 4\sigma^2 (\overline{H}_\infty)^2; \\
												0, & \text{if}\quad t < 4\sigma^2 (\overline{H}_\infty)^2.
											\end{cases}
											\label{eq-rate-function-str-cvx}
											\eeq
										}
									}
								\end{corollary}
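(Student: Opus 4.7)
The plan is to pass to the large-$K$ limit in the finite-time Chernoff bound \eqref{high-proba-bound}--\eqref{large-deviation-bound} provided by \cref{theorem-rate-function-finite-time}. First I would take $\tfrac{1}{K}\log$ on both sides of \eqref{large-deviation-bound} to obtain
\[
\frac{1}{K}\log \mathbb{P}\bigl(f(\bar{x}_K)-f_*\geq t\bigr) \;\leq\; -\frac{K+1}{K}\,\bar{I}_K(t),
\]
and then pass to $\limsup_{K\to\infty}$. Since $(K+1)/K\to 1$, the statement reduces to showing that $\liminf_{K\to\infty}\bar{I}_K(t)\geq \bar{I}(t)$.

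To establish this liminf bound, I would exploit the explicit lower bound on $\bar{I}_K(t)$ in \eqref{my-large-deviation-bound-2} via the second term in the maximum, namely $\bar{I}_K(t)\geq \Psi\bigl(t,\check{a}_K^{(2)},\check{b}_K^{(2)}\bigr)$. Inspecting the formulas for $\check{a}_K^{(2)}$ and $\check{b}_K^{(2)}$ in \cref{theorem-rate-function-finite-time}, and using $\lambda_+\in[0,1)$, I would verify the limits $\check{a}_K^{(2)}\to 0$ (because the corresponding numerator remains bounded while being divided by $K+1$) and $\check{b}_K^{(2)}\to (\overline{H}_\infty)^2$. Comparing the definitions \eqref{str-cvx-dual} and \eqref{eq-rate-function-str-cvx}, one sees that $\Psi\bigl(t,0,(\overline{H}_\infty)^2\bigr)=\bar{I}(t)$, so the desired conclusion follows from continuity of $\Psi(t,\cdot,\cdot)$ at $\bigl(0,(\overline{H}_\infty)^2\bigr)$.

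The main obstacle is precisely this continuity at the threshold $t=4\sigma^{2}(\overline{H}_\infty)^{2}$ where the two branches of \eqref{str-cvx-dual} meet; away from this threshold continuity is immediate from the explicit formula. At the threshold I would argue that both branches evaluate to $0$ and that the radicand passes continuously through zero, so that the non-trivial branch tends to $0$ from above, matching the zero branch. For $t$ in the strict sub-threshold regime $t<4\sigma^{2}(\overline{H}_\infty)^{2}$ the claim is vacuous, since for all $K$ large enough $t-\check{a}_K^{(2)}<4\sigma^{2}\check{b}_K^{(2)}$ giving $\Psi=0=\bar{I}(t)$; for $t>4\sigma^{2}(\overline{H}_\infty)^{2}$ the opposite inequality holds eventually, and the continuity of the explicit formula in the parameters $(a,b)$ does the rest.

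As an alternative, I could start from the Chernoff form \eqref{high-proba-bound} rather than the $\Psi$-based bound: using \cref{cor-asymptotic-risk-index-bound}, the limit $\limsup_{K\to\infty}\overline{R}_K(\theta)=\overline{R}(\theta)=\frac{2\sigma^{2}}{\theta}\log\!\bigl(\tfrac{1+\theta(\overline{H}_\infty)^{2}}{1-\theta(\overline{H}_\infty)^{2}}\bigr)$ yields
\[
\limsup_{K\to\infty}\frac{1}{K}\log\mathbb{P}\bigl(f(\bar{x}_K)-f_*\geq t\bigr)\;\leq\;-\sup_{0\leq\theta<1/(\overline{H}_\infty)^{2}}\frac{\theta}{2\sigma^{2}}\bigl(t-\overline{R}(\theta)\bigr),
\]
after which a one-dimensional first-order optimality computation produces the maximizer $\theta^{\star}=(\overline{H}_\infty)^{-2}\sqrt{1-4\sigma^{2}(\overline{H}_\infty)^{2}/t}$ when $t\geq 4\sigma^{2}(\overline{H}_\infty)^{2}$, and otherwise the supremum is attained as $\theta\downarrow 0$ with value $0$. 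Substituting $\theta^{\star}$ recovers exactly the closed form \eqref{eq-rate-function-str-cvx}, giving the same conclusion.
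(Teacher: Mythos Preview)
Your ``alternative'' route is exactly the paper's proof: start from the Chernoff bound \eqref{high-proba-bound}, pass to the $K\to\infty$ limit using \cref{cor-asymptotic-risk-index-bound} to replace $\overline{R}_K(\theta)$ by $\overline{R}(\theta)$, and then carry out the one-variable optimization in $\theta$ that yields $\theta_*=(\overline{H}_\infty)^{-2}\sqrt{1-4\sigma^2(\overline{H}_\infty)^2/t}$ and the closed form \eqref{eq-rate-function-str-cvx}.

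Your primary route through the $\Psi$-bound in \eqref{my-large-deviation-bound-2} is a genuinely different, and also correct, argument. It trades the explicit optimization in $\theta$ for a limit-in-parameters argument: since $\check a_K^{(2)}\to 0$ and $\check b_K^{(2)}\to(\overline{H}_\infty)^2$, and since $\Psi(t,0,(\overline{H}_\infty)^2)=\bar I(t)$, continuity of $\Psi$ in $(\check a,\check b)$ gives $\liminf_K\bar I_K(t)\ge\bar I(t)$. This is slightly more indirect, because the optimization producing $\Psi$ was already done once in \cref{lemma-dual-of-base-function}, and now you must also verify the threshold continuity you describe (which is correct: both branches meet at $0$ and the radicand passes through zero). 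The paper's approach is marginally cleaner in that it performs the $\theta$-optimization only once, directly on the limiting object $\overline{R}(\theta)$, avoiding any continuity discussion at the boundary case $t=4\sigma^2(\overline{H}_\infty)^2$.
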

								\begin{proof}
									Taking limit superior of both sides in \eqref{high-proba-bound}, for any \sa{$\theta\in [0, 1/\overline{H}_\infty^2)$,} 
									{
										\beqs
										\limsup_{K \to \infty}
										\frac{1}{K} \log \mathbb{P}\left( f(\bar{x}_K) - f(x_*) \geq t \right)
										&=&
										-\frac{\theta}{2\sigma^2} \left( t - \limsup_{K \to \infty} \overline{R}_{K}(\theta) \right).
										\eeqs
									}
									Finally, taking supremum over $\theta$ and using \cref{cor-asymptotic-risk-index-bound}, we see that \eqref{ineq-str-cvx-rate-function-bd} holds with 
									{\small
										$$\bar{I}(t)
										= \sup_{0 < \theta < 
												1/\overline{H}_\infty^2} \mgbis{\overline{F}}(\theta):=\left[ \frac{\theta t}{2\sigma^2} - \log\left( \frac{1 + \theta (\overline{H}_\infty)^2}{1 - \theta (\overline{H}_\infty)^2} \right) \right].
											$$
										}%
										\sa{Moreover, $\overline{F}(\cdot)$ is differentiable for $0 < \theta < 
												1/\overline{H}_\infty^2$ with} the derivative
											{\small
												\beq
												\mgbis{\overline{F}}'(\theta) = \frac{t}{2\sigma^2} - \frac{2(\overline{H}_\infty)^2}{1 - \left(\theta (\overline{H}_\infty)^2\right)^2},
												\eeq}%
											\sa{being} negative for all $\theta \in (0,~1/\overline H_\infty^2)$ when $t<4\sigma^2\overline H_\infty^2$ in which case we get $\bar I(t) = \mgbis{\overline{F}}(0) = 0$. If $t\geq 4\sigma^2\ys{(\overline H_{\infty})^2}$, then $\mgbis{\overline{F}}'(\theta)=0$ when 
											$\theta = \theta_*:= 
											\frac{1}{\overline{H}_\infty^2} \sqrt{1 - \frac{4\sigma^2 \overline{H}_\infty^2}{t}}.
											$
											Therefore, we get $\bar{I}(t) = \mgbis{\overline{F}}(\theta_*)$ which implies \eqref{eq-rate-function-str-cvx} as desired.
										\end{proof}
										{
											\begin{remark}
												By Corollary~\ref{cor-asymptotic-risk-index-bound}, 
												$
												\limsup_{K\to\infty}\,\mathbb{E}\!\left[f(\bar{x}_K)-\sa{f_*}\right] 
												= \overline{m} := 4\sigma^2(\overline H_\infty)^2.
												$
												Since this value represents the asymptotic expected suboptimality, \sa{the events 
													with suboptimality below \(\overline{m}\) are \textit{not}} exponentially unlikely; 
												\sa{hence,} 
												\(\bar I(t)=0\) whenever \(t < \overline{m}\), in accordance with \eqref{eq-rate-function-str-cvx}.  
												In contrast, for \(t>\overline{m}\), 
												\eqref{eq-rate-function-str-cvx} shows that \(\bar I(t)\) is strictly decreasing in \(\overline H_\infty\) over the region where \(\bar I(t) > 0\), i.e., when $
												0 < \overline H_\infty < \tfrac{\sqrt{t}}{2\sigma}
												$.
												Therefore, smaller values of \(\overline H_\infty\) correspond to strictly larger decay rates \(\bar I(t)\) at the threshold \(t\), provided that \(t\) exceeds the critical level \(\overline{m} = 4\sigma^2(\overline H_\infty)^2\). This shows that a \sa{smaller bound on} $H_\infty$-norm—reflecting greater robustness to worst-case deterministic noise—leads to sharper concentration properties and faster decay of the tail probabilities 
												\sa{associated with $f(\bar{x}_K)-\sa{f_*}$}. This is broadly consistent with the numerical behavior observed in \cref{fig:rate-fn-nag-beta-opt-vary-stepsize} for quadratics: reducing the step size increases the rate function $I(t)$ while simultaneously decreasing \sa{the bound on $H_\infty$-norm} for NAG-beta-opt, 
												\sa{implied by} \eqref{ineq-hinf-bound-agd-by-hand} in the appendix.
											\end{remark}
										}
										\section{Numerical Experiments}
										
										We consider the following strongly convex smooth objective:
										\[
										f(x) \coloneqq \sum_{i=1}^p \sa{g_\lambda}(a_i^\top x - b_i) +\frac{\mu}2\|x\|^2,
										\]
										where \sa{$\mu > 0$ is a regularization parameter, $a_i \in \R^d$, \sa{$b_i\in\reals$} for $i = 1,...,p$ for some $p \geq 1$ denotes the problem data,} and \sa{$g_\lambda(\cdot)$ denotes} the Huber loss defined as
										\[
										\sa{g_\lambda(x)}
										\coloneqq 
										\begin{cases*}
											\frac12 x^2, & $|x| \leq \lambda$; \\
											\lambda\left(|x| - \frac12 \lambda\right), & $|x| > \lambda$,
										\end{cases*}
										\]
										\sa{with $\lambda>0$ denoting the smoothing parameter of the Huber-loss.} {Such objectives arise in regularized formulations of robust linear regression \cite{filzmoser2021robust}}. {This objective resembles the heavy-ball counter-example analyzed in \cite{lessard2016analysis} and \sa{its} variations 
											have been studied in the literature \cite{scoy-gmm-ieee,gurbuzbalaban2023robustly} to test momentum methods.} 
										
										In our experiments, 
										\sa{for $d = 10, p = 1000$,} the entries of the data matrix $A = [a_1, a_2, ...,a_p] \in \R^{d \times p}$ and the vector $b \in \R^p$ 
										\sa{are i.i.d. with} a uniform distribution on the interval $\left[-1, 1\right]$. We set $\mu = 0.005$ and $L = 20$, and scale $A$ so that $\|A\| = \sqrt{L - \mu}$ \sa{--indeed, with these choice of parameters, $f$ is $\mu$-strongly convex and $\grad f$ is Lipschitz with constant $L$.} \sa{Finally, the smoothing parameter is set to $\lambda = 0.1$.}
										
										\sa{In this experiment we consider} 
										RS-HB, HB-fastest, GD-pop, NAG-beta-opt, and TMM (
										\sa{using} the parameters provided in \cref{tab:algo-params} for each method). On the left 
										\sa{plot} of \cref{fig: risk‐sensitive‐cost-str-cvx}, we consider unbiased stochastic noise setting, where \emph{batching} \sa{is used with batch size $b<p$} to construct an unbiased estimator of the gradient, i.e., the gradients are estimated 
										\sa{with} 
										$\tilde{\nabla}f(y_k) \coloneqq \frac{p}{b} \sum_{\sa{t}=1}^b g_\lambda(a_{\sa{i_t}}^\top y_k + b_{\sa{i_t}}) + \mu y_k$,
										\sa{using} \sa{$i_1,...,i_b$} \sa{that} are sampled with replacement from $\{1,...,\mgc{p}\}$ independently at step $k$. This type of estimation is common in the context of stochastic gradient descent and momentum methods \cite{aybat2019universally,gurbuzbalaban2021fractional,gurbuzbalaban2021heavy}. For each method, we run $n = 100$ sample paths, i.e., for every run \sa{$\ell$, $\{x_j^{(\ell)}\}_{j=0}^K$ is a simulated trajectory and each $\ell\in\{1,2,\dots,n\}$ corresponds to an independent run where we execute the algorithm in consideration 
											with the iteration budget of $K=1000$ iterations.} 
										We estimate the sensitive cost \sa{sequence $\{R_k(\theta)\}_{k}$} numerically by considering the sample averages, \sa{i.e.,}
										\[
										\hat{R}_{k}(\theta)
										\coloneqq
										\frac{2\hat{\sigma}^2}{\theta \mg{(k + 1)}}
										\log\left(
										\frac{1}{n} \sa{\sum_{\ell=1}^n}
										\exp\left(
										\frac{\theta}{2\hat{\sigma}^2}
										\sum_{j=\mg{0}}^{\mg{k}}
										\left[
										f\left(x_j^{(\sa{\ell})}\right) - 
										\sa{f_*}
										\right]
										\right)
										\right),
										\quad
										0 \leq k \leq K.
										\]
										where $\hat{\sigma}^2$ is an estimate of the variance proxy $\sigma^2$ 
										\sa{using} the empirical variance of 
										\sa{stochastic} gradients.\footnote{At iteration $k$, we compute the empirical variance, $\text{emp var}_k$, using 100 samples of the gradient evaluated at $y_k$, each of batch size 64. The final variance proxy estimate is $\hat \sigma^2 \coloneqq \frac14 \max_{1 \leq k \leq K} \text{emp var}_k$ --the scaling by $\frac14$ is from the fact for a $\sigma^2$-subGaussian random variable, $X$, $\text{Var}[X] \leq 4\sigma^2$. This can be seen from the intermediary moment calculation in \cref{lemma-mgf-bound} by taking $\tilde p = 2$.} 
										{On the left 
											\sa{plot} of \cref{fig: risk‐sensitive‐cost-str-cvx}, we 
											\sa{illustrate} $\hat{R}_k(\theta)$ for $\theta \in \{0.001,1000\}$ \mg{in a logarithmic scale}. We observe that GD-pop begins with a higher risk-sensitive cost than its momentum-based competitors, but after 1500 iterations \mgbis{this cost} begins to fall 
											\sa{below the competitors' costs}. This phenomenon reflects the trade-off between convergence rate and risk, as discussed in \cref{sec-quad}. While GD-pop converges more slowly, it can exhibit greater robustness to noise, since the impact of noise accumulates \sa{more gradually over iterations than the others}. Furthermore, both HB-fastest and its robustly-stable variant (RS-HB) converge to higher risk levels than TMM and NAG-beta-opt, with NAG-beta-opt achieving the lowest risk among the momentum-based methods we compared. This behavior is consistent with the patterns observed in the quadratic case, shown in \cref{fig: pareto-boundary}.}\looseness=-1 
										
										On the right 
										\sa{plot} of \cref{fig: risk‐sensitive‐cost-str-cvx}, we retain the same experimental setup as in the left 
										\sa{plot except for}
										one key difference: we introduce a bias term into the stochastic \sa{gradients.} Specifically, at each iteration $k$, we add an adversarial (bias) component to the previously unbiased stochastic gradients. To construct this bias, \sa{for every iteration $k$}, we sample 50 points from a $\delta$-ball centered at $\nabla f(y_k)$ with $\delta = 2.5$. This value of $\delta$ is chosen to clearly illustrate the contrast between the non-adversarial and adversarial noise scenarios. Among the sampled points, we select the one that maximizes the suboptimality $f(x_{k+1}) - 
										\sa{f_*}$. This form of adversarial perturbation is inspired by techniques from adversarial learning \cite{madry2019deeplearningmodelsresistant}. \ys{
											We plot the risk for both $\theta \in \{0.001, 1000\}$. On the right 
											\sa{plot}, we observe the hierarchy in the performance of the algorithms; \sa{moreover,} GD notably settles below its momentum-based competitors earlier (at around 750 iterations) than in the case with no adversarial noise. 
											
										}
										

										\begin{figure}[h!]
											\centering
											\includegraphics[width=0.98\linewidth]{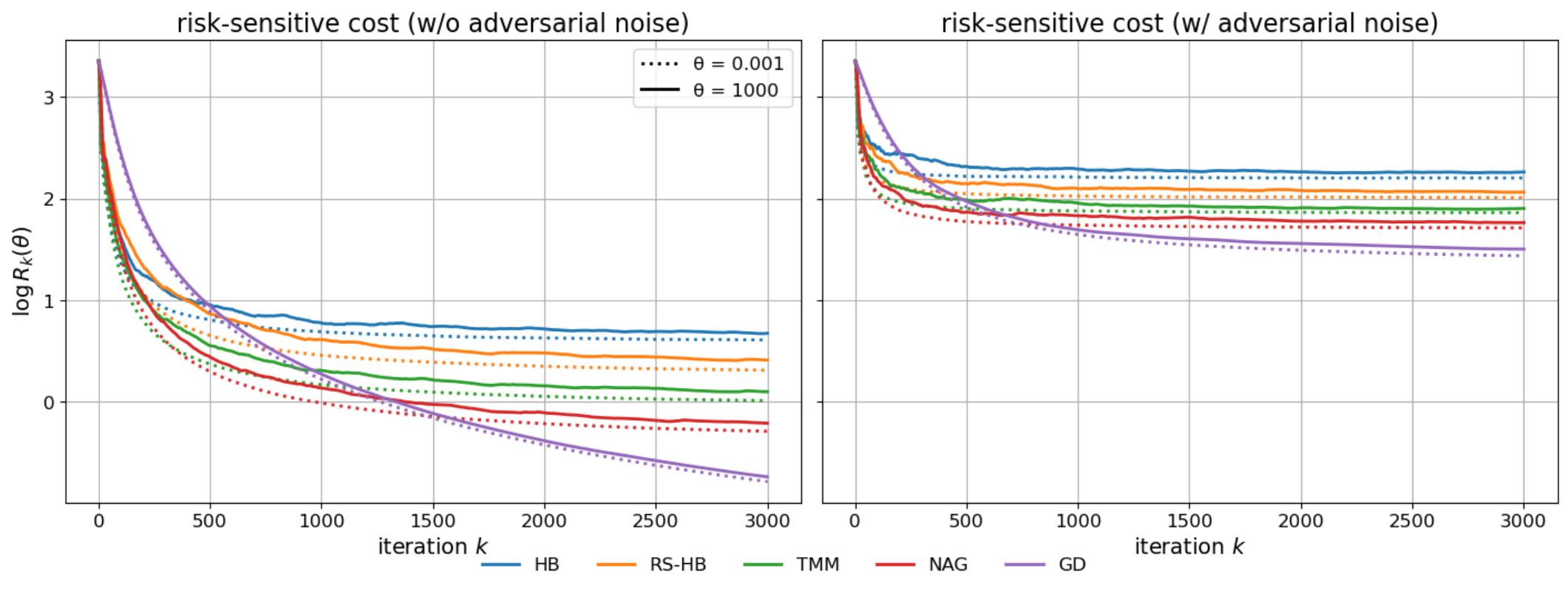}
											\caption{The risk-sensitive cost for GD settles the lowest among its popular competitors, HB, RS-HB, GD, and NAG in the case where there is pure stochastic noise and mixed stochastic and adversarial noise; $d = 10, p = 1000, \mu=0.005, L = 20, \theta = \{0.001,1000\}$.}
											\label{fig: risk‐sensitive‐cost-str-cvx}
										\end{figure}
										
										
										
										\section{Conclusion}
										{We analyzed the trade-off between convergence rate and robustness to gradient errors in generalized momentum methods (GMMs), encompassing \sa{Nesterov's} accelerated gradient, heavy-ball, and gradient descent \sa{methods}. Using the concept of 
											\sa{\textit{risk sensitivity index} (RSI)} from robust control, we characterized robustness under stochastic, \sa{potentially biased, additive gradient errors}. For quadratic objectives with \sa{isotropic} Gaussian noise, we derived closed-form RSI expressions via Riccati equations and identified a Pareto frontier between convergence rate and the robustness. We further established a large-deviation principle that \sa{connects the rate function characterizing the associated large-deviation principle to RSI and the $H_\infty$-norm, thus} bridging robustness to stochastic biased noise with robustness to deterministic worst-case errors. Extending our results to strongly convex smooth objectives, beyond quadratics, we provided finite-time RSI bounds under biased sub-Gaussian errors, yielding high-probability and non-asymptotic guarantees, and highlighted similar robustness--convergence trade-offs. To our knowledge, this is the first risk-sensitive, non-asymptotic analysis of GMMs with biased gradients. We provided numerical experiments on robust regression to illustrate our results.}
										\bibliographystyle{elsarticle-num}
										\bibliography{risk1,papers,robust}
										
										\appendix 
										\label{appendix}
										\section{{Existing bounds on the $H_\infty$-norm for 
												\sa{$f\in \Cml$}}}\label{sec-hinfty-appendix}

										\begin{theorem}[\cite{gurbuzbalaban2023robustly}]\label[theorem]{thm-hinfty-bound} Consider \mgb{the inexact} GMM dynamics \sa{in} \eqref{Sys:stoc-RBMM} for minimizing $f\in \Cml$ with fixed parameters $(\alpha, \beta, \nu)$ where $\alpha>0$, and $\beta,\nu \geq 0$, \mgb{when the gradient at step $k$, \sa{$\grad f(y_k)$,} is subject to additive gradient error $w_{k+1}$ \sa{resulting in $\tilde\grad f(y_k,w_{k+1})$}}. 
											\mgb{Assume that there exists non-negative scalars $\rho_0, \rho_1, \rho_2,\rho_3 \in [0,1),  a, b, c_0, c_1\geq 0$ and a positive semi-definite matrix $\tilde{P}\in\mathbb{R}^{2\times 2}$ such that the following conditions hold:} 
											\begin{itemize}
												\item [$(i)$] \mgb{$
													p+q < 1$, where 
													\beq p \coloneqq \mgb{\rho_0^2} + c_1\rho_1^2 + \rho_2^2
													+ \frac{4b(1 + \nu)^2L^2}{\mu}c_1, \qquad 
													q \coloneqq \rho_3^2 + \frac{4b\nu^2L^2}{\mu}c_1;\label{defn-pq}\eeq
												}
												\item [$(ii)$] $c_1 + \frac{2}{L} \tilde{r}(\tilde{P})>0$ \sa{and} $c_1 + \tilde{P}_{11}>0$ \sa{such that}
												\beq   \tilde{M}_2(\tilde{P},a) +c_1 \tilde{M_1} + c_0 \tilde{M}_0 \succeq 0\,,\label{constraint-sdp}
												\eeq\end{itemize}
											where $\tilde{M}_0, \tilde{M}_1$ and $\tilde{M}_2(\tilde P,a)$ are $4\times 4$ real symmetric matrices defined as
											{
												\beqs
												\mbox{$\tilde{M}_0$} = \begin{bmatrix}   2\mu L\tilde{C}^\top \tilde{C} & -(\mu+L)\tilde{C}^\top & 0_{2 \times 1}\\ -(\mu+L)\tilde{C} & 2 & 0 \\
													0_{1 \times 2} & 0 & 0 
												\end{bmatrix},\quad \tilde{M}_1 = 
												\left[ \begin{array}{@{}c|c@{}}
													\tilde{X} + \tilde{Z}
													& \begin{matrix} \frac{L\alpha (\beta - \nu)}{2} \tilde{M}_3^\top  \\ \frac{\alpha (1-L\alpha)}{2} \end{matrix} \\
													\cmidrule[0.4pt]{1-2}
													\begin{matrix} \frac{L\alpha (\beta - \nu)}{2} \tilde{M}_3 & \frac{\alpha (1-L\alpha)}{2} \end{matrix}  & 0 \\
												\end{array} \right],
												\eeqs
											}
											\beqs
											\mbox{$\tilde{M}_2(\tilde{P},a)$} =  \begin{bmatrix}	 -\tilde{A}^\top  \tilde{P} \tilde{A} + \rho_0^2 \tilde{P}   & - \tilde{A}^\top  \tilde{P} \tilde{B}  &  - \tilde{A}^\top  \tilde{P} \tilde{B}  \\
												- \tilde{B}^\top  \tilde{P}  \tilde{A}   &  - \tilde{B}^\top  \tilde{P} \tilde{B} +bc_1   &   - \tilde{B}^\top  \tilde{P}  \tilde{B}   \\
												- \tilde{B}^\top  \tilde{P} \tilde{A}   & - \tilde{B}^\top  \tilde{P} \tilde{B}  & a 
											\end{bmatrix}, \\
											\eeqs 
											with  
											$$ \tilde{M}_3 := \begin{bmatrix} 1 & - 1\end{bmatrix}, \qquad \tilde{Z} = \begin{bmatrix} \rho_1^2 P_{11} +\frac{\mu}{2} \rho_2^2 &  \rho_1^2 P_{12} & 0\\
												\rho_1^2 P_{12} & \rho_1^2 P_{22} +\frac{\mu}{2}\rho_3^2 & 0 \\
												0   &				0			& 0
											\end{bmatrix},$$
											\beq 
											~\tilde{X} = \tilde{X}_1 + \rho_0^2 \tilde{X}_2 + (1-\rho_0^2) \tilde{X}_3,\qquad  
											\tilde{X}_1 = \frac{1}{2}
											\begin{bmatrix}
												-L\mg{\tilde{\Delta}}^2 & L\mg{\tilde\Delta}^2 & -(1-\alpha L)\mg{\tilde{\Delta}} \\
												L\mg{\tilde\Delta}^2 & -L\mg{\tilde{\Delta}}^2 & (1-L\alpha)\mg{\tilde{\Delta}} \\
												-(1-\alpha L)\mg{\tilde{\Delta}} & (1-L\alpha)\mg{\tilde{\Delta}} & \alpha(2-L\alpha)
											\end{bmatrix},
											\label{def-tilde-X}
											\eeq 
											\begin{align*}
												\tilde{X}_2 = \frac{1}{2}
												\begin{bmatrix} 
													\nu^2\mu& -\nu^2\mu & -\nu \\
													-\nu^2\mu & \nu^2\mu & \nu  \\
													-\nu  & \nu  & 0
												\end{bmatrix},\qquad 
												\tilde{X}_3 = \frac{1}{2}
												\begin{bmatrix} 
													(1+\nu)^2\mu & -\nu(1+\nu)\mu & -(1+\nu) \\
													-\nu(1+\nu)\mu & \nu^2\mu & \nu  \\
													-(1+\nu)  & \nu  & 0 
												\end{bmatrix},
											\end{align*}
											\sa{and} $\mg{\tilde{\Delta}}:= \beta - \nu$; \sa{moreover,} $\tilde{A},\tilde{B},\tilde{C}$ matrices are as in \eqref{def: system mat for TMM} and 
											\beq  \label{def-r-P}
											\tilde{r}(\tilde P) = \begin{cases}
												\tilde{P}_{11} - {\tilde{P}_{12}^2}/{\tilde{P}_{22}} & \mbox{if} \quad \tilde{P}_{22}\neq 0, \\
												\tilde{P}_{11} & \mbox{if} \quad \tilde{P}_{22}=  0.
											\end{cases}
											\eeq 
											Then, \mg{the Lyapunov function $V_{P, c_1}$ defined in \eqref{eq-lyap-gmm-general} with $P =\tilde P\times \mathsf{I}_d$ satisfies the \mgb{decay property}}
											\beq\label{lyapunov-descent}
											V_{P,c_1}(\xi_{k+1})
											\leq 
											pV_{P,c_1}(\xi_k)
											+
											qV_{P,c_1}(\xi_{k-1})
											+
											r\|w_{k+1}\|^2,
											\eeq
											\mgb{for \sa{all} $k\geq 0$ along the GMM iterates where \mgbis{$p,q$ are as in \eqref{defn-pq}}, and
												$r \coloneqq 
												a + \alpha^2\left(\frac{c_1L}{2} + \tilde P_{11}\right)$ with the convention that $\xi_{-1}:=\xi_0$}. \mgb{Furthermore, in the special case when the sequence $\{w_{k+1}\}_{k\geq 0}$ is deterministic and square-summable, the $H_\infty$-norm of the GMM defined in \eqref{def-Hinfty} is finite and admits the bound}
											\beq \quad\quad 
											\mgb{
												H_\infty \leq \overline{H}_\infty:= \bigg(
												\tfrac{1 }{1-\big(p+q\big)} {  \tfrac{{
															r } }{{ c_1 + \frac{2}{L} \tilde{r}(\tilde{P}) }} 
												} 
											}
											\bigg)^{1/2} \label{def-Hinfty-ub}.
											\eeq
											\mgbis{More specifically, it satisfies \eqref{def-l2-gain-general} for $\gamma=\overline{H}_\infty$ with}
											\ys{
												\[
												\mgbis{H_\gamma(\xi_0)}
												=
												\frac{1 + \frac{4b\nu^2L^2}{\mu}c_1 + \rho_3^2}{1 -(p+q)}\left(\frac{4\|\tilde P\|}{\mu} + c_1\right) \frac{1}{c_1 + \frac2L\tilde r(\tilde P)} (f(x_0) - 
												\sa{f_*}).
												\]}
											
										\end{theorem} 
										\begin{proof} \mgb{The result follows directly from Theorem 5.6 and the inequalities (47)-(48) in \cite{gurbuzbalaban2023robustly}.}
										\end{proof}
										The following results are corollaries of the previous theorem, providing explicit bounds on the $H_\infty$-norm for GD and NAG.
										\begin{corollary}[\cite{gurbuzbalaban2023robustly}]\label[corollary]{cor-GD-params-MI}
											For GD with step size with $\alpha\in (0,2/L)$, the descent property \eqref{lyapunov-descent} and the bound \eqref{def-Hinfty-ub} holds with
											$$p=\rho_{GD}(\alpha):=\max\{|1-\alpha\mu|,|1-\alpha L|\},\quad q=0, \quad r = \frac{\alpha^2}{1-\rho_{GD}(\alpha)}, \quad \tilde P=\mathsf{I}_2, \quad c_1 = 0.$$ 
											In addition, 
											\eqref{lyapunov-descent} and 
											\eqref{def-Hinfty-ub} \sa{also hold with an alternative set of parameters:}
											$$p=1-2\mu\alpha(1-\frac{L\alpha}{2})+ \alpha\mu|1-\alpha L|s_{\scriptscriptstyle \mathrm{GD}}, \quad q=0, \quad r = \alpha\left(\frac{L}{2}\alpha + \frac{|1-\alpha L|}{2s_{\scriptscriptstyle \mathrm{GD}}}\right), \quad \tilde{P}=0, \quad c_1 = 1,$$
											where 
											$s_{\scriptscriptstyle \mathrm{GD}}=1$ when $\alpha\leq \frac{1}{L}$ and $s_{\scriptscriptstyle \mathrm{GD}}=\frac{2-\alpha L}{\alpha L}$ for $\alpha\in (1/L,2/L)$. Therefore, \cref{thm-hinfty-bound} implies
											$$\small H_\infty \leq \overline{H}_\infty=  \begin{cases} 
												\frac{1}{\sqrt{2\mu}} & \mbox{if} \quad 0 < \alpha \leq \frac{1}{L}, \\
												\frac{1}{\sqrt{2\mu}}\frac{\alpha L}{2-\alpha L} & \mbox{if} \quad \frac{1}{L} <\alpha \leq \frac{2}{L+\sqrt{L\mu}},\\
												\frac{1}{\sqrt{2\mu}}  \sqrt{\kappa} & \mbox{if} \quad  \frac{2}{L+\sqrt{L\mu}} < \alpha \leq \frac{2}{L+\mu}, \\
												\frac{\alpha\sqrt{L}}{\sqrt{2}(2-\alpha L)}  & \mbox{if} \quad  \frac{2}{L+\mu}<\alpha < \frac{2}{L}. 
											\end{cases}
											$$
										\end{corollary}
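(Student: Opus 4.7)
The plan is to invoke \cref{thm-hinfty-bound} twice, once for each candidate Lyapunov function, verify the two hypotheses of that theorem in closed form, and then take the minimum of the two resulting $\overline{H}_\infty$ upper bounds to recover the stated piecewise formula. For GD we have $\beta=\nu=0$, so $\tilde\Delta := \beta-\nu=0$ and the system matrices in \eqref{def: system mat for TMM} simplify to $\tilde A=\bigl[\begin{smallmatrix}1&0\\1&0\end{smallmatrix}\bigr]$, $\tilde B=[-\alpha,\,0]^\top$, $\tilde C=[1,\,0]$. Consequently $\tilde X_1$ reduces to having only a single nonzero entry $\tfrac{\alpha(2-L\alpha)}{2}$ in position $(3,3)$, and $\tilde X_2,\tilde X_3$ likewise collapse (only $\tilde X_3$ retains the $\tfrac{\mu}{2}$ entry in $(1,1)$ and $-\tfrac12$ entries in $(1,3),(3,1)$). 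The $4\times 4$ MI of \eqref{constraint-sdp} then reduces to a very sparse quadratic form that is amenable to explicit verification.

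For the first choice, take $c_1=0$, $\tilde P=\mathsf{I}_2$, so $V_{P,c_1}(\xi_k)=\|x_k-x_*\|^2+\|x_{k-1}-x_*\|^2$, and $c_1+\tfrac{2}{L}\tilde r(\tilde P)=\tfrac{2}{L}$. Condition (i) amounts to $p+q=\rho_{GD}(\alpha)<1$, which holds on $\alpha\in(0,2/L)$. For condition (ii), I set $\rho_0^2=\rho_{GD}(\alpha)$, $\rho_1=\rho_2=\rho_3=0$, and choose $c_0\geq 0$ and $a=\tfrac{\alpha^2}{1-\rho_{GD}(\alpha)}$; the resulting matrix is essentially diagonal after the sparsity collapse above, and the PSD check boils down to the standard GD-contraction identity $\|x_k-\alpha\nabla f(x_k)-x_*\|^2\leq \rho_{GD}(\alpha)^2\|x_k-x_*\|^2$ combined with Young's inequality for the noise cross-term. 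Substituting $r=a$ into \eqref{def-Hinfty-ub} yields $\overline H_\infty^{(1)}=\tfrac{\alpha\sqrt L}{\sqrt 2\,(1-\rho_{GD}(\alpha))}$.

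For the second choice, take $c_1=1$, $\tilde P=0$, so $V_{P,c_1}(\xi_k)=f(x_k)-f_*$ and $c_1+\tfrac{2}{L}\tilde r(\tilde P)=1$. Condition (i) becomes $1-p=2\mu\alpha(1-L\alpha/2)-\alpha\mu|1-\alpha L|s_{GD}>0$, which one verifies directly in each of the regimes $\alpha\le 1/L$ (with $s_{GD}=1$) and $\alpha\in(1/L,2/L)$ (with $s_{GD}=(2-\alpha L)/(\alpha L)$); the role of $s_{GD}$ is simply the free parameter in the Young's-inequality decomposition $2\langle\nabla f(x_k),w_{k+1}\rangle\leq s_{GD}\|\nabla f(x_k)\|^2/(\text{scale})+\text{scale}\|w_{k+1}\|^2/s_{GD}$ used on the cross term in the descent lemma. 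With $\tilde P=0$, $\tilde M_2(0,a)$ has only the bottom-right entry $a$, so the MI \eqref{constraint-sdp} reduces to a $4\times 4$ inequality controlled entirely by $\tilde M_1+c_0\tilde M_0$ plus the scalar $a=r-\tfrac{L\alpha^2}{2}$; this is precisely the quadratic form certifying the smoothness-plus-strong-convexity descent rewritten with the Young splitting. Plugging the claimed $p,q,r$ into \eqref{def-Hinfty-ub} and simplifying gives $\overline H_\infty^{(2)}=\tfrac{1}{\sqrt{2\mu}}$ for $\alpha\le 1/L$ and $\overline H_\infty^{(2)}=\tfrac{\alpha L}{\sqrt{2\mu}(2-\alpha L)}$ for $\alpha\in(1/L,2/L)$.

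Finally, the piecewise formula follows by taking $\overline H_\infty=\min\{\overline H_\infty^{(1)},\overline H_\infty^{(2)}\}$ and locating the crossovers. Direct comparison shows $\overline H_\infty^{(2)}\le \overline H_\infty^{(1)}$ on $(0,2/(L+\sqrt{L\mu})]$ with the two coinciding at $\alpha=2/(L+\sqrt{L\mu})$ where both equal $\sqrt{\kappa/(2\mu)}$; on $(2/(L+\sqrt{L\mu}),2/(L+\mu)]$ the first choice dominates and yields the constant $\sqrt{\kappa/(2\mu)}$ (since $\rho_{GD}(\alpha)=1-\alpha\mu$ there); and on $(2/(L+\mu),2/L)$ the first choice again dominates with $\rho_{GD}(\alpha)=\alpha L-1$, giving $\tfrac{\alpha\sqrt L}{\sqrt 2(2-\alpha L)}$. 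I expect the main obstacle to be the clean algebraic verification of the $4\times 4$ MI for the second choice in the regime $\alpha>1/L$ with the nontrivial $s_{GD}$; the sign of $1-\alpha L$ flips and a careful accounting of the Young-split terms is needed to match the claimed $p$ and $r$ exactly. An alternative route that sidesteps most of the MI algebra is to derive the descent inequality \eqref{lyapunov-descent} directly from the recursion $f(x_{k+1})\le f(x_k)-\alpha\langle\nabla f(x_k),\nabla f(x_k)+w_{k+1}\rangle+\tfrac{L\alpha^2}{2}\|\nabla f(x_k)+w_{k+1}\|^2$ and then read off $p,q,r$ by coefficient-matching, which I would use as a sanity check against the MI route.
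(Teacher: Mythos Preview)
The paper's own proof is just a pointer to \cite[Section~5]{gurbuzbalaban2023robustly}, so there is no in-paper argument to compare against; your plan---instantiate \cref{thm-hinfty-bound} once with $(c_1,\tilde P)=(0,\mathsf I_2)$ and once with $(c_1,\tilde P)=(1,0)$, read off $\overline H_\infty^{(1)},\overline H_\infty^{(2)}$ from \eqref{def-Hinfty-ub}, and take the pointwise minimum---is exactly the computation that reference performs, and your crossover analysis recovering the four-piece formula is correct. One cosmetic slip: with $\tilde P=\mathsf I_2$ the definition $r=a+\alpha^2(\tfrac{c_1L}{2}+\tilde P_{11})$ gives $r=a+\alpha^2$, not $r=a$, so the correct choice is $a=\alpha^2\rho_{GD}/(1-\rho_{GD})$; your stated $\overline H_\infty^{(1)}=\alpha\sqrt{L}/\bigl(\sqrt{2}(1-\rho_{GD})\bigr)$ is nonetheless right.
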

										\begin{proof}
											See \cite[Section 5]{gurbuzbalaban2023robustly} for details. 
										\end{proof}
										\begin{corollary}[\cite{gurbuzbalaban2023robustly}]\label[corollary]{cor-NAG-params-MI}
											For NAG with $\alpha\in(0,1/L]$ and $\beta = \nu = \frac{1-\sqrt{\alpha\mu}}{1+\sqrt{\alpha \mu}}$, the descent property \eqref{lyapunov-descent} and the bound \eqref{def-Hinfty-ub} holds for
											$$ p= \rho^2_{\tiny \mbox{NAG}} +  \frac{8\mg{\alpha}\sqrt{\alpha} L^3}{\mu (1+\sqrt{\alpha\mu})^2 \hat{s}_2}  + \frac{\sqrt{\alpha}\mg{(4-\sqrt{\alpha\mu} + \alpha\mu)}}{\mg{2}\hat{s}_1 (1+\sqrt{\alpha\mu})^2 },  $$
											$$ q= \frac{\sqrt{\alpha} (1- \sqrt{\alpha\mu}) }{2\hat{s}_1  (1+\sqrt{\alpha\mu})^2 } + \frac{2\alpha \sqrt{\alpha} L^3 \beta^2}{\hat{s}_2 \mu},\quad r= \frac{\alpha( 1 + \alpha L)}{2} + \sqrt{\alpha}\hat{s}_1 + \frac{\mg{\alpha^2} \sqrt{\alpha} L}{2}\hat{s}_2,$$
											$$c_1 = 1, \quad \tilde P = \frac1{2\alpha}\begin{bmatrix}1 \\ -(1-\sqrt{\alpha\mu})\end{bmatrix}\begin{bmatrix}1 & -(1-\sqrt{\alpha\mu})\end{bmatrix}, $$
											where 
											\[
											\hat s_1 = \frac{2(5-2\sqrt{\alpha\mu}+\alpha\mu)}{\sqrt{\mu}(1+\sqrt{\alpha\mu})^2}, \quad \hat s_2 = \frac{8L^3\alpha}{\mu\sqrt{\mu}(1+\sqrt{\alpha\mu})^2}(4+(1-\sqrt{\alpha\mu})^2),
											\]
											\ys{and $\rho_{NAG}^2 \coloneqq 1 - \sqrt{\alpha\mu}$}. Therefore, \cref{thm-hinfty-bound} provides the bound
											{\small
												\beq  \hspace{-0.13in} H_\infty &\leq& \overline{H}_\infty= 
												\sqrt{ \frac{4 \mg{(5-2\sqrt{\alpha\mu} + \alpha\mu)} }{ \mu (1+\sqrt{\alpha\mu})^2 } +
													\frac{\sqrt{\alpha}( 1 + \alpha L)}{\sqrt{\mu}} 
													+ \frac{8 \mg{\alpha^3} L^4\left(4+(1-\sqrt{\alpha\mu})^2\right) }{ \mu^2 (1+\sqrt{\alpha\mu})^2 }  }.
												\label{ineq-hinf-bound-agd-by-hand} 
												\eeq
											}
										\end{corollary}
										\begin{proof}
											See \cite[Section 5]{gurbuzbalaban2023robustly} for details.
										\end{proof}
										\section{Proof of Theorem \ref{thm-gmm-risk-formula}}\label[appendix]{sec-proof-theorem-risk-quadratic}
										We first present a lemma that characterizes the risk-sensitive index in terms of the solutions to the $d\times d$ DARE \sa{in}~\eqref{dare}.
										\begin{lemma}\label[lemma]{lem-quad-risk-index} In the setting of \cref{thm-gmm-risk-formula},  
											the risk-sensitive index of the GMM algorithm \sa{in~\eqref{RBMM}} for a given risk parameter $\theta>0$ admits the formula,
											$$ R(\theta) = 
											\begin{cases} 
												-\frac{\sigma^2}{\theta} \log \det (\mathsf{I}_\ys{d} - \frac{\theta}{d} B^\top  X B) & \mbox{if} \quad \sqrt{\theta} H_\infty < \sqrt{d}, \\
												+\infty  & \mbox{if} \quad \sqrt{\theta} H_\infty \geq \sqrt{d},
											\end{cases} 
											$$
											where $H_\infty$ is given by \eqref{hinfty-quad}, and $X$ is the ``stabilizing solution" of the DARE \sa{in} \eqref{dare} for $\gamma = \frac{1}{\sqrt{\theta/d}}$, i.e., $X$ is a solution and has the property that
											$ \rho(A_Q + \gamma^{-2} B B^\top  X ) < 1 $ 
											for $\gamma = \frac{1}{\sqrt{\theta/d}}$. 
											
										\end{lemma}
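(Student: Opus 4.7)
My plan is to reduce the claim to a classical fact from risk-sensitive linear-quadratic control (in the spirit of Jacobson \cite{jacobson1973exponential}, Whittle \cite{whittle1981risk}, and Glover-Doyle \cite{glover1988state}) specialized to the GMM system matrices $(A_Q,B,T)$ identified in \eqref{Sys:quad-deterministic-noise}--\eqref{def-T}. Since $f$ is quadratic, \eqref{eq-f-with-eigdeg} gives $f(x_k)-f_{*}=\|T\xi_k^c\|^2=\|z_k\|^2$, so the cumulative suboptimality rewrites as the quadratic form $S_K=\sum_{k=0}^K(\xi_k^c)^\top T^\top T\,\xi_k^c$. Under Assumption \ref{assump-exp-moment-det-additive} with $w_{k+1}\sim\mathcal{N}(0,\tfrac{\sigma^2}{d}\mathsf{I}_d)$, the state sequence $\{\xi_k^c\}$ is Gaussian, because \eqref{Sys:quad-deterministic-noise} is a linear system driven by Gaussian noise. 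Hence $S_K$ is a quadratic form in jointly Gaussian variables, and $\mathbb{E}[\exp(\tfrac{\theta}{2\sigma^2}S_K)]$ is a Gaussian integral for which I can write a tractable recursion.

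The first step is a backward-in-time dynamic programming argument. I will define the value function $V_k(\xi):=\mathbb{E}\bigl[\exp\bigl(\tfrac{\theta}{2\sigma^2}\sum_{j=k}^{K}\|T\xi_j^c\|^2\bigr)\mid \xi_k^c=\xi\bigr]$ and argue by induction on $k=K,K-1,\ldots,0$ that $V_k(\xi)=c_k\exp\bigl(\tfrac{\theta}{2\sigma^2}\xi^\top X_k \xi\bigr)$ for some scalar $c_k>0$ and symmetric positive semi-definite matrix $X_k\succeq 0$. Plugging in the dynamics $\xi_{k+1}^c=A_Q\xi_k^c+Bw_{k+1}$ and carrying out the Gaussian integration (using the identity $\int \exp(-\tfrac{1}{2}w^\top M w+v^\top w)\,dw\propto(\det M)^{-1/2}\exp(\tfrac12 v^\top M^{-1}v)$ when $M\succ 0$), I will obtain the Riccati-type recursion
\begin{equation*}
X_k=T^\top T+A_Q^\top X_{k+1}A_Q+A_Q^\top X_{k+1}B\,(\gamma^2\mathsf{I}_d-B^\top X_{k+1}B)^{-1}B^\top X_{k+1}A_Q
\end{equation*}
with $\gamma=1/\sqrt{\theta/d}$, $X_{K+1}=0$, together with the update $c_k=c_{k+1}\det(\mathsf{I}_d-\tfrac{\theta}{d}B^\top X_{k+1}B)^{-1/2}$. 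This identifies the recursion as exactly the finite-horizon DARE iteration associated with \eqref{dare}, and it requires $\gamma^2\mathsf{I}_d-B^\top X_{k+1}B\succ 0$ at every step for the Gaussian integral to be finite.

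The second step is to take $K\to\infty$ and identify the steady-state of the recursion. I will appeal to the standard correspondence (see e.g.\ \cite{hinrichsen1991stability,stoorvogel1993discrete}) that \eqref{dare} admits a stabilizing positive semi-definite solution $X$, with $\rho(A_Q+\gamma^{-2}BB^\top X)<1$, if and only if $\gamma>H_\infty$; equivalently, $\sqrt{\theta}H_\infty<\sqrt{d}$. In that regime, the backward iterates $X_k$ converge (as $K-k\to\infty$) to the stabilizing solution $X$, which together with $\rho(A_Q)<1$ implies $\det(\mathsf{I}_d-\tfrac{\theta}{d}B^\top X B)>0$ and yields
\begin{equation*}
R_K(\theta)=\frac{2\sigma^2}{\theta(K+1)}\log\mathbb{E}\Bigl[\exp\Bigl(\tfrac{\theta}{2\sigma^2}S_K\Bigr)\Bigr]\longrightarrow -\frac{\sigma^2}{\theta}\log\det\!\Bigl(\mathsf{I}_d-\tfrac{\theta}{d}B^\top X B\Bigr)
\end{equation*}
as $K\to\infty$, using that the exponent grows linearly in $K$ while the contribution of the initial condition vanishes after division by $K+1$. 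Conversely, when $\sqrt{\theta}H_\infty\geq\sqrt{d}$, no stabilizing solution exists and at some finite step the matrix $\gamma^2\mathsf{I}_d-B^\top X_{k+1}B$ fails to be positive definite, so the Gaussian MGF blows up and $R(\theta)=+\infty$.

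\textbf{Main obstacle.} The routine portion is the Gaussian integration and the recursion; the delicate part is the passage to the limit and the sharp dichotomy at $\sqrt{\theta}H_\infty=\sqrt{d}$. Concretely, I need to carefully argue (i) monotonicity and boundedness of the finite-horizon iterates $X_k$ in the Loewner order so that $X_k\to X$ as the horizon grows, (ii) that the limit is precisely the stabilizing solution rather than any other solution of \eqref{dare}, and (iii) that the scalar factors $c_k$ aggregate to the claimed $\log\det$ formula after normalization by $K+1$. For (ii), I will use that $\rho(A_Q)<1$ so that $X_0\equiv 0$ lies in the domain of attraction of the stabilizing solution; for (iii), the identity $\sum_{k=0}^{K}\log\det(\mathsf{I}_d-\tfrac{\theta}{d}B^\top X_k B)\sim (K+1)\log\det(\mathsf{I}_d-\tfrac{\theta}{d}B^\top X B)$ by Cesàro averaging. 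The link $\sqrt{\theta}H_\infty<\sqrt{d}\Leftrightarrow \gamma>H_\infty$ is imported directly from \cite[Cor.~2.1, Prop.~3.7]{hinrichsen1991stability} combined with \cite[Lemma 3.4]{stoorvogel1993discrete}, as already noted in the paragraph preceding \eqref{dare}.
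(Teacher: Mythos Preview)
Your approach is correct and genuinely different from the paper's. The paper does \emph{not} carry out the time-domain dynamic programming recursion you propose; instead it passes to the frequency domain. Concretely, after rescaling $B\mapsto B_\sigma=\tfrac{\sigma}{\sqrt{d}}B$ so that the driving noise is standard Gaussian, the paper invokes Glover--Doyle \cite{glover1988state} to identify $R(\theta)$ with the entropy integral
\[
R(\theta)=-\frac{\sigma^2}{2\pi\theta}\int_{-\pi}^{\pi}\log\det\!\Bigl(\mathsf{I}_d-\tfrac{\theta}{d}G(e^{i\omega})G(e^{i\omega})^*\Bigr)\,d\omega
=\frac{\sigma^2}{d}\,\mathrm{Ent}\!\left(G,\tfrac{1}{\sqrt{\theta/d}}\right),
\]
where $G$ is the transfer matrix of $(A_Q,B,T)$, and then cites \cite{iglesias2000entropy,peters2012minimum,stoorvogel1993discrete} for the state-space evaluation $\mathrm{Ent}(G,\gamma)=-\gamma^2\log\det(\mathsf{I}_d-\gamma^{-2}B^\top XB)$ via the stabilizing DARE solution. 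The dichotomy at $\gamma=H_\infty$ comes for free from the identity $H_\infty=\|G\|_\infty$ and the well-posedness of the entropy integral.

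Your route has the advantage of being self-contained: the backward Gaussian integration directly produces the finite-horizon Riccati recursion and the $\log\det$ normalization constants, with no detour through transfer functions. The price is that you must supply the convergence analysis (Loewner monotonicity, boundedness, convergence to the \emph{stabilizing} fixed point, and Ces\`aro averaging of the $c_k$'s), all of which the paper offloads to the cited control-theory results. One small caveat: your blow-up argument for $\sqrt{\theta}H_\infty\geq\sqrt{d}$ is slightly imprecise at the boundary $\gamma=H_\infty$, where the finite-horizon iterates may remain nonsingular for every $K$ yet be unbounded; the divergence of $R(\theta)$ then comes from the Ces\`aro limit rather than a finite-step singularity. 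Finally, note that the paper's frequency-domain representation is not incidental: the integral formula \eqref{eq-entropy-integral} is reused downstream (in \cref{proposition-rate-function} and \cref{coro-risk-index-as-onedim-integral}) to verify the G\"artner--Ellis hypotheses for the LDP, and that representation does not fall out of your approach.
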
 
										\begin{proof} \mgb{Since $f$ is a quadratic, the GMM system \eqref{Sys:quad-deterministic-noise}--\eqref{def-T} is a linear dynamic system described by the system matrices $(A_Q, B,T)$. For linear systems, it is well-known that the $H_\infty$-norm admits the frequency domain characterization}  
												\beq H_\infty = \sa{\norm{G}_\infty}:=\max_{\{z\in\mathbb{C}:\|z\|=1\}}\| G(z)\|\,,\label{def-H-infty-freq}
												\eeq
												(see e.g., \cite{gurbuzbalaban2023robustly}); 
												where $G$ is the transfer matrix of the system $(A_Q, B, T)$ defined as 
												$G(z) \coloneqq T(z\mathsf{I}_{2d} - A_Q)^{-1}B$ for $z\in \mathbb{C} \sa{\setminus}\mbox{\text{Spec}}(A_Q)$ with $\mbox{\text{Spec}}(A_Q):=\bigcup_i \{\lambda_i\}$ denoting the set of eigenvalues of $A_Q$. \mgb{Thus, we can conclude that} $\|G\|_\infty=H_\infty$ and $H_\infty$ is given by \eqref{hinfty-quad}. 
												
												On the other hand, by scaling the gradient noise, we can rewrite the \mg{GMM system \eqref{Sys:quad-deterministic-noise}--\eqref{def-T}}:
												\beq 
												{\xi}_{k+1}^c &= A_Q {\xi}^c_{k} +  B_\sigma \zeta_{\ys{k+1}}, \quad z_k &= T {\xi}_k^c, \label{Sys:stoc-RBMM-scaled}
												\eeq
												where $B_\sigma = \mgb{\frac{\sigma}{\sqrt{d}}} B$ and $\zeta_k \sim \mathcal{N}(0, \sa{\mathsf{I}_d})$ is \sa{a 
													normal random vector with zero mean and unit variance}. Glover and Doyle showed that the risk-sensitive index of this system with the system matrices $(A_Q, B_\sigma, T)$ can be represented by the following integral 
												\beq R(\theta) = -\frac{\sigma^2}{2\pi \theta} \int_{-\pi}^\pi \log \det \left(\mathsf{I}_\ys{d} - \frac{\theta}{\sigma^2} G_\sigma(e^{i\omega}) G_\sigma^*(e^{i\omega})\right) d\omega\,,
												\label{eq-centered-iter}
												\eeq
												\sa{for $\theta\in\mgbis{\mathbb{R}}$ such that $\theta \|G_\sigma\|_\infty^2 < \sigma^2$; otherwise, $R(\theta)=+\infty$} (see \cite[Section 3]{glover1988state})\footnote{\mgc{While \cite{glover1988state} primarily focuses on the case $\theta > 0$, 
														the discussion and the formula for the risk-sensitive index in 
														\cite[Section~3]{glover1988state} also apply when $\theta \leq 0$, 
														since they are derived from \cite[Lemmas~3.1 and~3.2]{glover1988state}, whose validity extends to all $\theta \leq 0$.}} where $G_\sigma$ is the transfer \mgbis{matrix} of the system $(A_Q, B_\sigma, T)$ \footnote{Here, $A_Q \coloneqq A + BQC$ where $B$ is not scaled by $\sigma$, as $A_Q$ is our iteration matrix from \eqref{def-AQ}.} and $B_\sigma = \mgb{\frac{\sigma}{\sqrt{d}}} B$ --with the convention that the integral \eqref{eq-centered-iter} for $\theta=0$ is to be interpreted by continuity according to
												\small
												\begin{equation*} 
													R(0)\sa{:=} \lim_{\theta\downarrow 0} -\frac{\sigma^2}{2\pi \theta} \int_{-\pi}^\pi \log \det \left(\mathsf{I}_\ys{d} - \frac{\theta}{\sigma^2} G_\sigma(e^{i\omega}) G_\sigma^*(e^{i\omega})\right) d\omega =\frac{1}{2\pi}\int_{-\pi}^\pi \operatorname{trace} \left( G_\sigma(e^{i\omega}) G_\sigma^*(e^{i\omega}) \right)d\omega.
												\end{equation*} 
												\sa{Since} $G_\sigma = \mgb{\frac{\sigma}{\sqrt{d}}} G$, 
												\mgbis{for $\theta\|G\|_\infty^2 < d$}, it holds that 
												\beq  R(\theta) &=& -\frac{\sigma^2}{2\pi \theta} \int_{-\pi}^\pi \log \det \left(\mathsf{I}_\ys{d} - \mgb{\frac{\theta}{d}} G(e^{i\omega}) G^*(e^{i\omega})\right) d\omega,
												\label{entropy-integral}
												\eeq
												\mgbis{with the convention that $R(0) = \frac{\sigma^2}{2\pi d}\int_{-\pi}^\pi \operatorname{trace} \left( G(e^{i\omega}) G^*(e^{i\omega}) \right)d\omega$}
												and \sa{$R(\theta)=+\infty$ for $\theta>0$ such that $\sqrt{\theta} \|G\|_\infty \geq \sqrt{d}$. Indeed, more compactly we can write} 
												\beq
												R(\theta)=\frac{\sigma^2}{d} \mathrm{Ent} \left(G, \frac{1}{\mgb{\sqrt{\theta/d}}}\right)\,, \label{eq-entropy-vs-risk}
												\eeq
												where \sa{$\mathrm{Ent}(G,\gamma)$ denotes the entropy integral of the transfer matrix $G$ with respect to the parameter $\gamma>0$ \cite{stoorvogel1993discrete}, i.e.,}
												\beq \mathrm{Ent}(G,\gamma) \sa{:=} -\frac{\gamma^2 }{2\pi} \int_{-\pi}^\pi \log \det (\mathsf{I}_\ys{d} - \gamma^{-2}G(e^{i\omega})G^*(e^{i\omega}))d\omega,
												\eeq 
												\sa{which can be computed as follows:}
												\beq\label{entropy} \mathrm{Ent}(G,\gamma) = \begin{cases} 
													-\gamma^2 \log \det (\mathsf{I}_\ys{d} - \gamma^{-2} B^\top  X  B) & \mbox{if} \quad \|G\|_\infty < \gamma,  \\
													\infty				&		\mbox{if} \quad \|G\|_\infty \geq  \gamma, 
												\end{cases}	
												\eeq
												where $X$ is the stabilizing solution to the discrete-time Riccati equation \eqref{dare}, i.e., $X$ solves \eqref{dare} and satisfies
												$ \rho(A_Q + \gamma^{-2} B B^\top  X ) < 1 $
												(see e.g., \cite{iglesias2000entropy}, \cite[Remark 4.11]{peters2012minimum}, \cite[Lemma 3.4]{stoorvogel1993discrete}). 
												Applying this formula to \eqref{eq-entropy-vs-risk} \mgb{and using $\|G\|_\infty = H_\infty$}, we conclude.  
											\end{proof}
											We next present a lemma that establishes a connection between the $2 \times 2$ solutions of the DARE given in \eqref{eq-small-riccati-bis} and the $2d \times 2d$ solutions of the larger DARE provided in \eqref{dare}.
											\begin{lemma}\label[lemma]{prop-riccati-dim-reduction} In the setting of \cref{thm-gmm-risk-formula}, consider the $2d\times 2d$ DARE \sa{in}
												\eqref{dare}\sa{, i.e.,} 
												\begin{equation*} X = A_Q^\top  X A_Q + A_Q^\top  X B (\gamma^2 \mathsf{I}_d - B^\top  X B)^{-1} B^\top  X A_Q + T^\top T,
												\end{equation*}
												for $\gamma= \frac{1}{\sqrt{\theta/d}}$ where the matrices $A_Q, B$ and $T$ are defined in~\eqref{def-AQ}, \eqref{def-ABC} and \eqref{def-T}, respectively. \mg{Assume that the parameter $\theta$ satisfies $\sqrt{\theta}H_\infty < \mgb{\sqrt{d}}$ where $H_\infty$ is as in \eqref{hinfty-quad}}. \mg{Then, the stabilizing solution $\overline X\in\mathbb{R}^{2d\times 2d}$ to this $2d\times 2d$ DARE is unique and is given by}
												$$\overline X := Y \underset{i=1,...,d}{\Diag}( \tilde{X}^{(\lambda_i})) Y^\top  \in \mathbb{R}^{2d\times 2d}\,,$$
												where $\tilde{X}^{(\lambda_i)}\in\mathbb{R}^{2\times 2}$ is the stabilizing solution to \eqref{eq-small-riccati-bis} for $\gamma = \frac{1}{\sqrt{\theta/d}}$ and $\lambda=\lambda_i$, and 
														$Y = V\Sigma \in \mathbb{R}^{2d\times 2d}$ is an orthonormal matrix with \mg{$V:=\mbox{Diag}(U,U)$ and $U$ is the orthonormal matrix defined in \eqref{def-U-matrix}}, 
														and $\Sigma\in\mathbb{R}^{2d\times 2d}$ is the permutation matrix corresponding to the permutation $\pi$ over the set $\{1,2,\dots,2d\}$ that satisfies $\pi(i)= 2i-1$ for $1\leq i \leq d$ and $\pi(i)= 2(i-d)$ for $d+1\leq i \leq 2d$, i.e., $\Sigma$ is such that $\Sigma_{ij} = 1$ if $j=\pi(i)$; otherwise, $\Sigma_{ij} = 0$.
													\end{lemma}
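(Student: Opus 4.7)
The plan is to reduce the $2d \times 2d$ DARE~\eqref{dare} to $d$ decoupled $2\times 2$ DAREs of the form~\eqref{eq-small-riccati-bis} via a joint state/input orthogonal transformation, and then to lift the reduced solutions back. First I would exploit the Kronecker structure $A = \tilde A \otimes \mathsf{I}_d$, $B = \tilde B \otimes \mathsf{I}_d$, $C = \tilde C \otimes \mathsf{I}_d$ together with the eigendecomposition $Q = U\Lambda U^\top$ to block-diagonalize $A_Q$. Writing $V = \Diag(U,U)$, a direct computation shows that $V^\top A_Q V$ is a $2d\times 2d$ matrix whose four $d \times d$ sub-blocks are all simultaneously diagonal in the eigenbasis of $Q$. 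Applying the interleaving permutation $\Sigma$ (as defined in the lemma) then yields $Y^\top A_Q Y = \underset{i=1,\dots,d}{\Diag}\tilde A^{(\lambda_i)}$, while $Y^\top T^\top T Y$ is block diagonal with $i$-th block $\bigl[\begin{smallmatrix}\lambda_i/2 & 0 \\ 0 & 0\end{smallmatrix}\bigr]$.

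Next, I would handle the input matrix $B$. Because $B \in \mathbb{R}^{2d \times d}$ acts on the $d$-dimensional noise, the state transformation alone cannot decouple $Y^\top B$, since its $d$ columns are not naturally matched to the $d$ row-blocks induced by $\Sigma$. The key observation is that both the stabilizing DARE solution and its spectral-radius condition are invariant under the substitution $B \mapsto BU$ whenever $U$ is orthogonal. Indeed, the identity $U(\gamma^2\mathsf{I}_d - U^\top M U)^{-1}U^\top = (\gamma^2\mathsf{I}_d - M)^{-1}$ applied with $M = B^\top X B$ preserves the Riccati correction term, while $(BU)(BU)^\top = BB^\top$ preserves the closed-loop matrix $A_Q + \gamma^{-2}BB^\top X$. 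Probabilistically, this is the isotropy of the Gaussian noise: $U^\top w$ has the same distribution as $w$. With $B$ replaced by $BU$, one has $\mathcal{B} := Y^\top BU = \Sigma^\top\bigl[\begin{smallmatrix}-\alpha\mathsf{I}_d \\ 0_d\end{smallmatrix}\bigr]$, which is block diagonal with $d$ blocks each equal to $\tilde B$. Substituting $\overline X = Y\hat X Y^\top$ with $\hat X := \underset{i}{\Diag}\tilde X^{(\lambda_i)}$ into the transformed DARE thus decouples it into $d$ independent $2\times 2$ DAREs, each coinciding with~\eqref{eq-small-riccati-bis} for $\lambda = \lambda_i$.

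Then I would verify existence, stability, and uniqueness. The hypothesis $\sqrt{\theta}\,H_\infty < \sqrt d$ is equivalent to $\gamma = \sqrt{d/\theta} > H_\infty = \|G\|_\infty$. Because the transformed system is block diagonal, the full transfer function factors into scalar subsystem transfer functions $\tilde G^{(\lambda_i)}$ and $\|G\|_\infty = \max_i \|\tilde G^{(\lambda_i)}\|_\infty$; hence $\gamma$ exceeds the $H_\infty$-norm of every subsystem. Standard DARE theory (as invoked in \cref{lem-quad-risk-index}) then provides a unique positive semi-definite stabilizing solution $\tilde X^{(\lambda_i)}$ to each reduced equation. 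Assembling $\overline X = Y\hat X Y^\top$ and reversing both transformations yields a solution of~\eqref{dare}. For the spectral radius condition, $\rho(A_Q + \gamma^{-2}BB^\top\overline X) = \rho(\hat A + \gamma^{-2}\mathcal{B}\mathcal{B}^\top \hat X)$, which is block diagonal with blocks $\tilde A^{(\lambda_i)} + \gamma^{-2}\tilde B \tilde B^\top \tilde X^{(\lambda_i)}$, each of spectral radius strictly less than one by stability of $\tilde X^{(\lambda_i)}$. Uniqueness of $\overline X$ follows from the standard uniqueness of the stabilizing solution of a DARE.

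The main obstacle is the bookkeeping of the two orthogonal transformations---state by $Y = V\Sigma$ and input by $U$---and the rigorous justification that the input-side rotation $U$ can be absorbed without affecting the Riccati solution. This rests on the twin identities $(BU)(BU)^\top = BB^\top$ and $U(\gamma^2\mathsf{I}_d - U^\top M U)^{-1}U^\top = (\gamma^2\mathsf{I}_d - M)^{-1}$, which together guarantee both the Riccati invariance needed for the equation and the spectral-radius invariance needed for stability. A secondary point requiring explicit verification is that the interleaving permutation $\Sigma$ indeed produces exactly the blocks $\tilde A^{(\lambda_i)}$ and $\bigl[\begin{smallmatrix}\lambda_i/2 & 0 \\ 0 & 0\end{smallmatrix}\bigr]$, which follows mechanically from $\pi(i)=2i-1$ for $i \le d$ and $\pi(i) = 2(i-d)$ for $i > d$.
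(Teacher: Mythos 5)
Your proposal is correct and follows essentially the same route as the paper's proof: block-diagonalize $A_Q$ and $T^\top T$ via $Y=V\Sigma$, establish existence of the stabilizing $2\times 2$ solutions from the fact that $\gamma$ exceeds the $H_\infty$-norm of each decoupled subsystem, assemble $\overline X = Y\hat X Y^\top$, and check the closed-loop spectral radius blockwise. The only cosmetic difference is that you absorb the input rotation by replacing $B$ with $BU$ and invoking invariance of the Riccati correction term, whereas the paper verifies directly that $Y\hat B(\gamma^2\mathsf{I}_d-\hat B^\top\hat X\hat B)^{-1}\hat B^\top Y^\top$ equals $B(\gamma^2\mathsf{I}_d-B^\top\overline X B)^{-1}B^\top$; both rest on the same commutation identity.
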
 
													\begin{proof} 
														We first consider the $2\times 2$
														DARE \sa{in} \eqref{eq-small-riccati-bis}, \sa{i.e.,}
														\begin{equation}\tilde{X} := (\tilde{A}^{(\lambda_i)})^\top  \tilde{X} \tilde{A}^{(\lambda_i)} +   (\tilde{A}^{\lambda_i)})^\top  \tilde{X}
															\tilde{B}(\gamma^2  - \tilde{B}^\top  \tilde{X}\tilde{B})^{-1}\tilde{B}^\top 
															\tilde{X}
															\tilde{A}^{(\lambda_i)}
															+ {\tilde Q}^{(\lambda_i)}\,, 
															\label{eq-small-riccati-bis-copy}
														\end{equation}
														for $i = 1,...,d$, where
														$${\tilde{Q}}^{(\lambda_i)} = \begin{bmatrix} \frac{\lambda_i}{2}& 0 \\ 0 & 0 \end{bmatrix} = {\big({\tilde{T}}^{(\lambda_i)}\big)}^\top {{\tilde{T}}^{(\lambda_i)}} \quad \mbox{with} \quad {{\tilde{T}}^{(\lambda_i)}}:= 
														\begin{bmatrix} \frac{\sqrt{\lambda_i}}{\sqrt{2}}& 0 \\ 0 & 0 \end{bmatrix}.
														$$
														\mg{We will first argue that a stabilizing solution $\tilde{X}^{(\lambda_i)}$ exists and is unique. For any $i\in\{1,2,\dots,d\}$, the existence of \sa{a solution to} the DARE in~\eqref{eq-small-riccati-bis-copy} can be inferred from the transfer matrix of the system $(\tilde{A}^{(\lambda_i)}, \tilde{B}, T^{(\lambda_i)})$, defined as $G^{(\lambda_i)}(z) \sa{:=} T^{(\lambda_i)}(z \mathsf{I}-\tilde{A}^{(\lambda_i)})^{-1}\tilde B$. More specifically, it is known that a stabilizing solution exists \sa{and it is unique} if and only if 
															\beq\gamma>\|G^{(\lambda_i)}\|_\infty=\max_{\{z\in\sa{\mathbb{C}}:~\|z\|=1\}}\| G^{(\lambda_i)}(z)\|,
															\label{cond-existence-soln-small-riccati}
															\eeq 
															(see \cite[Corollary 2.1, Prop. 3.7]{hinrichsen1991stability}, \cite[Lemma 3.4]{stoorvogel1993discrete}) where we also know $\|G^{(\lambda_i)}\|_\infty$ equals the $H_\infty$-norm of the corresponding system $(\tilde{A}^{(\lambda_i)}, \tilde{B}, T^{(\lambda_i)})$ (see \eqref{def-H-infty-freq}). 
															This system subject to a 
															\mgb{given \ys{deterministic} noise input \sa{$\{\tilde{\delta}_k\}_{k\geq 0}$} admits the state-space representation
															}
															$$ \tilde{\xi}_{k+1} = \tilde{A}^{(\lambda_i)} \tilde{\xi}_{k} + \tilde B \tilde{\delta}_k \quad \mbox{where}\quad
															\tilde{\xi}_{k} = \begin{bmatrix} \tilde{x}_k \\ \tilde{x}_{k-1} \end{bmatrix} \in \mathbb{R}^2, 
															$$
															\sa{with the} output $\tilde{z}_{k} = T^{(\lambda_i)} \tilde{\xi}_{k}$ \sa{for $k\geq 0$,} 
															which is equivalent to the recursion: 
															\begin{eqnarray}
																\tilde{x}_{k+1}&=& \tilde{x}_{k}-\alpha (\sa{\tilde{f}_i'}( \tilde{y}_k ) + \tilde{\delta}_k) + \beta(\tilde{x}_{k}- \tilde{x}_{k-1}), \quad \tilde{y}_{k}=\tilde{x}_k+\nu (\tilde{x}_k- \tilde{x}_{k-1}),\\
																\tilde{z}_k &=& \frac{\sqrt{\lambda_i}}{\sqrt{2}}\tilde{x}_{k},\quad |\tilde{z}_k|^2 = \frac{\lambda_i}{2} (\tilde{x}_k)^2,
															\end{eqnarray}
															\sa{where} the auxillary function $\sa{\tilde{f}_i}:\mathbb{R}\to\mathbb{R}$ defined as $\tilde{f}(\tilde{x}):= \frac{1}{2}\lambda_i \tilde{x}^2$ \sa{and $\tilde{f}_i'$ represents the derivative of $\tilde f_i$}.
														This recursion is equivalent to GMM with inexact gradients for minimizing \sa{$\tilde{f}_i$} in dimension one; therefore, applying Theorem \ref{thm-h-inf} to \sa{$\tilde{f}_i$} with $d=1$, $\mu=L=\lambda_i$, it follows that $$\|G^{(\lambda_i)}\|_\infty = \frac{\alpha}{\sqrt{2}} \frac{ \sqrt{\lambda_i}}{\tilde{s}_{\lambda_i}}.$$ 
														Then, by our choice of $\theta$ and $\gamma=\frac{1}{\sqrt{\theta/d}}$, we have $\gamma>\mg{H_\infty} =\max_{1\leq i \leq d} \frac{\alpha}{\sqrt{2}} \frac{ \sqrt{\lambda_i}}{\tilde{s}_{\lambda_i}}\geq \|G^{(\lambda_i)}\|_\infty$ for every $i=1,2,\dots,d$, where in the last equality we used \cref{thm-h-inf}. Therefore, the condition \eqref{cond-existence-soln-small-riccati} holds, and we conclude that the stabilizing solution $\tilde{X}^{(\lambda_i)}$ exists and is unique for every $i=1,2,\dots,d$.
													}
													\mg{Next, we introduce the square matrix}
													\beq \hat{X} := \underset{i=1,...,n}{\Diag}( \tilde{X}^{\ys{(\lambda_i)}}) \in \mathbb{R}^{2d\times 2d},
													\label{def-hat-X}
													\eeq 
													as well as the block diagonal matrices $\hat{B} \in \mathbb{R}^{2d\times d}$ with $\hat{B}=\Diag(\tilde{B}, \tilde{B}, \dots, \tilde{B})$ with $\tilde{B}=\begin{bmatrix} 
														-\alpha &
														0
													\end{bmatrix}^\top$, $\hat{Q} = \underset{i=1,...,d}{\Diag}({\tilde Q}^{(\lambda_i)}) \in \mathbb{R}^{2d\times 2d}$, $\hat{A} =   \underset{i=1,...,d}{\Diag}( \tilde{A}^{(\lambda_i)})\in\mathbb{R}^{2d\times 2d}$. 
															Then, $\hat{X}$ satisfies
															\beq \hat{X} = \hat{A}^\top  \hat{X} \hat{A} +   \hat{A}^\top  \hat{X}
															\hat{B}(\gamma^2 \mathsf{I}_d - \hat{B}^\top \hat{X}\hat{B})^{-1}\hat{B}^\top 
															\hat{X}\hat{A}
															+\hat{Q}\,,
															\label{eq-riccati-hat}
															\eeq
															where we used the fact that $\tilde{X}^{(\lambda_i)} \in \mathbb{R}^{2\times 2}$ is a solution to \eqref{eq-small-riccati-bis}. Since $\tilde{X}^{(\lambda_i)}$ is a stabilizing solution, it also satisfies $\rho(\tilde{A}^{(\lambda_i)} + \gamma^{-2}\tilde{B}\tilde{B}^\top  \tilde{X}^{(\lambda_i)})<1$ for every $i$. Therefore, $\hat{X}$ is a stabilizing solution of \eqref{eq-riccati-hat} satisfying 
															\beq \rho(\hat{A} + \gamma^{-2}\hat{B}\hat{B}^\top  \hat{X})= \max_{1\leq i\leq d} \rho(\tilde{A}^{(\lambda_i)} + \gamma^{-2}\tilde{B}\tilde{B}^\top  \tilde{X}^{(\lambda_i)}) <1.
															\label{hat-X-stabilizing-sol}
															\eeq 
														\mg{Note that $Q = U \Lambda U^\top  = U \big( \underset{i=1,...,d}{\Diag}(\lambda_i)\big) U^\top$ and we have
															$$ V^\top A_Q V  =  \begin{bmatrix} 
																(1+\beta)\mathsf{I}_d -\alpha(1+\nu)\Lambda & -\beta \mathsf{I}_d +\alpha\nu \Lambda \\
																\mathsf{I}_d & 0_d 
															\end{bmatrix} = \Sigma \hat{A} \Sigma^{\top},$$ 
															where $A_Q$ is as in \eqref{def-AQ}}. 
														By similar computations, it also follows that 
														\beq Y\hat{A}Y^\top  = A_Q, \quad Y\hat{Q}Y^\top  = T^\top  T. 
														\label{eq-AQ-transformed}
														\eeq
														If we multiply the equation \eqref{eq-riccati-hat} with $Y$ from left and with $Y^\top $ from right, using the fact that $Y^\top Y = I$, we obtain from \eqref{eq-AQ-transformed} that $\overline X := Y\hat{X} Y^\top $ solves the matrix equation
														\beq X = A_Q^\top  X A_Q + A_Q^\top  X M A_Q^\top  X + T^\top  T 
														\label{eq-riccati-satisfied}
														\eeq  
														with
														$$ M := Y\hat{B}(\gamma^2 \mathsf{I}_d - \hat{B}^\top \hat{X}\hat{B})^{-1}\hat{B}^\top  Y^\top . $$
														Note that, by the definition of $\hat{B}$, it follows that 
														$$ \hat{B}(\gamma^2 \mathsf{I}_d - \hat{B}^\top \hat{X}\hat{B})^{-1}\hat{B}^\top  = \underset{i=1,...,d}{\Diag}(\hat{Z}^{(i)} ), \quad 
														\hat{Z}^{(i)} :=   \begin{bmatrix}
															(\frac{\gamma^2}{\alpha^2} - \hat{X}_{2i-1,2i-1})^{-1} &  0 \\
															0  &  0 
														\end{bmatrix}. $$
														Multiplying this equality from left by $\Sigma$ and from right by $\Sigma^\top $ further, we obtain
														\beqs
														\Sigma\hat{B}(\gamma^2 \mathsf{I}_{2d} - \hat{B}^\top \hat{X}\hat{B})^{-1}\hat{B}^\top \Sigma^\top  
														&=& \begin{bmatrix}  Z  & 0_d \\
															0_d &  0_d 
														\end{bmatrix}, \quad Z = \underset{i=1,...,d}{\Diag}\left((\frac{\gamma^2}{\alpha^2} - \hat{X}_{2i-1,2i-1})^{-1}\right).
														\eeqs
														Multiplying this \sa{equality} with $V$ from left and with $V^\top $ from right, we obtain 
														$$ M = \begin{bmatrix}  UZU^\top   & 0_d \\
															0_d &  0_d 
														\end{bmatrix}. $$
														On the other hand, after a similar computation, we have
														\beq \overline X = Y\hat{X} Y^\top  =   \begin{bmatrix}  U \underset{i=1,...,d}{\Diag}\left( \hat{X}_{2i-1,2i-1}\right) U^\top   & 0_d \\
															0_d &  0_d 
														\end{bmatrix}.\label{eq-X-from-tildeX}
														\eeq
														Then, using the definition of the matrices $B$ and $Z$, 
														\beq B (\gamma^2 \mathsf{I}_d - B^\top  \sa{\overline X} B)^{-1}B^\top   &=& B \left(\gamma^2 \mathsf{I}_d -\alpha^2  U \underset{i=1,...,d}{\Diag}\left( \hat{X}_{2i-1,2i-1}\right) U^\top  \right)^{-1}B^\top  \nonumber 
														\\
														&=& \begin{bmatrix} UZU^\top   & 0_d \\
															0_d &  0_d 
														\end{bmatrix} = M.
														\eeq
														Plugging this expression into \eqref{eq-riccati-satisfied}, we conclude that $\overline X$ satisfies the Riccati equation \eqref{dare} as claimed. 
														Furthermore, using $Y\hat{A}Y^\top  = A_Q$ and $Y Y^\top =\mathsf{I},$ 
														\beq 
														\rho(A_Q + \gamma^{-2} B B^\top  \overline X) &=& \rho \left(Y(\hat{A} +\gamma^{-2} Y^\top  B B^\top  \overline X Y) Y^\top \right) \\
														&=& \rho \left(\hat{A} +\gamma^{-2} Y^\top  B B^\top  \overline X Y\right)\\
														&=& \rho \left(\hat{A} +\gamma^{-2} Y^\top  B B^\top  Y \hat{X} \right) \\ 
														&=& \rho \left(\hat{A} +\gamma^{-2} \hat{B}{\hat{B}}^\top  \hat{X} \right) < 1, \label{ineq-stabilizing-sol-big-riccati}
														\eeq
														where in the second equality we used the fact that $Y$ is orthogonal, in the third \sa{equality} we used $\overline X=Y\hat X Y^\top$ and in the last \sa{equality} we used the fact that $Y^\top  B B^\top  Y = \hat{B}\hat{B}^\top $ and \eqref{hat-X-stabilizing-sol}. We conclude from \eqref{ineq-stabilizing-sol-big-riccati} that $\overline X$ is a stabilizing solution to \eqref{dare}. Finally, if the stabilizing solution exists then it is unique, see \cite{Ionescu}, \mg{\cite[Lemma 3.4]{stoorvogel1993discrete}}. This completes the proof.
																		\end{proof}
																		Now, the stage is set to complete the proof of \cref{thm-gmm-risk-formula}, where we will build on \cref{prop-riccati-dim-reduction} and \cref{lem-quad-risk-index}.
																		Let  $\gamma = \frac{1}{\sqrt{\theta/d}}$. According to \cref{prop-riccati-dim-reduction}, $\overline X = Y \underset{i=1,...,n}{\Diag}( \sa{\tilde{X}^{(\lambda_i)}}) Y^\top  \in \mathbb{R}^{2d\times 2d}$ is a solution to \eqref{dare} satisfying $\rho(A_Q + \gamma^{-2}BB^\top \sa{\overline X} ) < 1$. Therefore, from \cref{lem-quad-risk-index},  
																		\beq\label{eq-risk in terms of hat x} R(\theta) = 
																		\begin{cases} 
																			-\frac{\sigma^2}{\theta} \log \det (\mathsf{I}_d - \frac{\theta}{d} B^\top  Y \underset{i=1,...,d}{\Diag}( \sa{\tilde{X}^{(\lambda_i)}}) Y^\top  B) & \mbox{if} \quad \sqrt{\theta} H_\infty < \sqrt{d}, \\
																			\infty  & \mbox{if} \quad \sqrt{\theta} H_\infty \geq \sqrt{d},
																		\end{cases} 
																		\eeq
																		From the \sa{definitions of $B:=\tilde B\otimes \mathsf{I}_d$ and 
																			$Y:=V\Sigma$}, we obtain
																		$$ B^\top  Y \underset{i=1,...,d}{\Diag}( \sa{\tilde{X}^{(\lambda_i)}}) Y^\top  B   
																		= \alpha^2 U {\Diag}( \sa{\tilde{X}_{11}^{(\lambda_i)}})  U^\top. 
																		$$
																		Consequently, 
																		\beqs  \log \det \left(\mathsf{I}_\ys{d} - \frac{\theta}{d} B^\top  Y \underset{i=1,...,d}{\Diag}( \sa{\tilde{X}^{(\lambda_i)}} ) Y^\top  B \right) 
																		&=& \log \det \left( U \left( \mathsf{I}_\ys{d} - \frac{\theta}{d} \alpha^2  \underset{i=1,...,d}{\Diag}( \sa{\tilde{X}_{11}^{(\lambda_i)}}) \right)  U^\top    
																		\right) \\
																		&=& \sum_{i=1}^d \log \left(1-\frac{\theta}{d}\alpha^2  \tilde{X}_{11}^{\mgbis{(\lambda_i)}}\right).
																		\eeqs
																		Inserting this expression into \eqref{eq-risk in terms of hat x}, we conclude.
																		\section{Proofs of \cref{proposition-rate-function} and \cref{coro-risk-index-as-onedim-integral}}
																		\subsection{Proof of \cref{proposition-rate-function}}\label[appendix]{sec-prop-rate-function}
																		From \eqref{entropy-integral}, we recall that for $\theta/d < \ys{1/}H_\infty^2$, we have
																		\beq  R(\theta) &=& -\frac{\sigma^2}{2\pi \theta} \int_{-\pi}^\pi \log \det \left(\mathsf{I}_\ys{d} - \mgb{\frac{\theta}{d}} G(e^{i\omega}) G^*(e^{i\omega})\right) d\omega <\infty\,,
																		\label{eq-r-theta-representation}
																		\eeq
																		\mgbis{with the convention that $R(0) = \frac{\sigma^2}{2\pi d}\int_{-\pi}^\pi \operatorname{trace} \left( G(e^{i\omega}) G^*(e^{i\omega}) \right)d\omega$} where 
																		$H_\infty= \max_\omega \|G(e^{i\omega})\|$; and  $R(\theta)= +\infty$ if $\theta/d \geq \ys{1/} H_\infty^2$. We conclude that\footnote{{
																				\sa{$R(\theta)<\infty$} for $\theta \in (0,\frac{d}{H_\infty^2})$ also follows from \cref{thm-gmm-risk-formula}; however, that result does not apply to the $\theta<0$ regime.}}
																		\beq R(\theta)<\infty \mbox{ when } \theta \in (-\infty, \frac{d}{H_\infty^2}),  \quad \mbox{and} \quad R(\theta)=+\infty \mbox{ otherwise}.
																		\label{def-domain-risk-index}
																		\eeq
																		Next, we consider the following logarithmic moment generating function: 
																		$$\tilde{\Lambda}_K(\lambda) \coloneqq \log \E\left[e^{\lambda \tilde{S}_K}\right],\quad \mbox{where} \quad \tilde S_K = \frac{S_K}{K+1}.$$ 
																		It is known that the function $\tilde{\Lambda}_K(\lambda)$ is convex in $\lambda$ in its domain for every $K$.\footnote{Indeed, for any \( \bar\lambda_1, \bar\lambda_2 \in \mbox{dom}(\Lambda_k):=\{\lambda : \Lambda_k(\lambda)<\infty\} \), and \( \omega \in [0, 1] \), we have $
																			\tilde{\Lambda}_K(\omega \bar\lambda_1 + (1 - \omega)\bar\lambda_2)
																			= \log \mathbb{E}[e^{\omega \bar\lambda_1 \tilde{S}_K} e^{(1 - \omega)\bar\lambda_2 \tilde{S}_K}] \leq
																			\log \big( \mathbb{E}[e^{\bar\lambda_1 \tilde{S}_K}]^{\omega} \mathbb{E}[e^{\bar\lambda_2 \tilde{S}_K}]^{1-\omega}\big)
																			= 
																			\omega \tilde{\Lambda}_K(\bar\lambda_1) + (1 - \omega) \tilde{\Lambda}_K(\bar\lambda_2)$
																			where the inequality is due to H\"older's inequality. Thus, \( \tilde{\Lambda}_K(\lambda) \) is convex in $\lambda$.
																		} Recall that the risk-sensitive index is given by: $R(\theta) = \lim_{K\to\infty} \frac{2\sigma^2}{\theta (K+1)} \log \E\left[e^{\frac{\theta}{2\sigma^2} S_K}\right]$. Therefore, taking $\tilde S_K \coloneqq \frac1{K+1} S_K$ and $\lambda = \frac{\theta}{2\sigma^2}$, we observe that the limit,  
																		\beq
																		\tilde{\Lambda}(\lambda):=
																		\lim_{K\to\infty} \frac1{K+1}\log \E\left[e^{\lambda (K+1) \tilde S_K}\right] = 
																		\lambda R(2\sigma^2\lambda)\,, \label{def-Lambda-function}
																		\eeq
																		exists as an extended number for any $\lambda\in \ys{\mathbb{R}}$, {i.e., the limit is a real number or $+\infty$}. In addition, 
																		$\tilde\Lambda$ admits the domain 
																		\beq \mbox{dom}(
																		\tilde\Lambda):=\{ \lambda : \tilde{\Lambda}(\lambda)<\infty\} = \left(-\infty, \frac{d}{2\sigma^2 H_\infty^2}\right),
																		\label{def-dom-Lambda}
																		\eeq
																		where we used \eqref{def-domain-risk-index} and \eqref{def-Lambda-function}. 
																		Furthermore, $\tilde\Lambda$ is a convex function of $\lambda$ as it is the pointwise limit of convex functions $\frac{1}{K+1}\tilde{\Lambda}_K(\lambda)$.
																		G\"artner-Ellis theorem \cite[Section 2]{dembo2009large} says that if 
																		\begin{itemize}
																			\item [(I)] $0$ in the interior of the domain of $\tilde\Lambda$, i.e., $0 \in (\mbox{dom}(\tilde\Lambda))^{o}$,
																			\item [(II)]$\tilde\Lambda$ is lower semi-continuous,
																			\item [(III)] $\tilde\Lambda$ is \emph{essentially smooth}, i.e., it satisfies the following properties:
																			\begin{itemize}
																				\item  [(i)] The interior of the domain of $\tilde{\Lambda}(\lambda)$ is non-empty,
																				\item [(ii)] $\tilde{\Lambda}(\lambda)$ is differentiable in the interior of its domain,
																				\item [(iii)] $|{\tilde\Lambda}'(\lambda_n)| \to +\infty$ for all $\lambda_n \to \lambda^*$ where $\lambda^*$ is in the boundary of the interior of the domain,
																			\end{itemize}
																		\end{itemize}
																		then the LDP holds with the good rate function 
																		$$I(s):=\sup_{\lambda \in \text{dom}(\tilde\Lambda)} [\lambda s - \tilde{\Lambda}(\lambda)] =  \sup_{-\infty < \theta < \frac{d}{H_\infty^2}}\left[\frac{\theta}{2\sigma^2} \big(\ys{s} - R(\theta)\big)
																		\right],$$
																		which implies \eqref{eq-rate-function-quad} as desired. In the rest of the proof, we will argue that these conditions hold. By the assumption $\rho(A_Q)<1$, \cref{thm-h-inf} applies and $H_\infty<\infty$. Using this fact and \eqref{def-dom-Lambda}, we conclude that condition (I) and condition (i) holds. From the representation \eqref{eq-r-theta-representation}, we observe that $R(\theta)$ is continuous and differentiable in its domain; therefore, using \eqref{def-Lambda-function}, we conclude that $\tilde\Lambda$ is continuous and differentiable on its domain. \sa{Hence,} conditions (II) and (ii) both also hold. It suffices to argue that condition (iii) holds. Note that $\lambda_* = \frac{d}{2\sigma^2H_\infty^2}$ is the only boundary point for the domain of $\tilde\Lambda$ and
																		\beq \tilde\Lambda'(\lambda) = R(2\sigma^2\lambda) + 2\sigma^2\lambda \frac{dR_{\sigma^2}(\theta)}{d\theta} \bigg|_{\theta=2\sigma^2\lambda}.
																		\label{eq-Lambda-derivative}
																		\eeq
																		Let $\lambda_n$ be a sequence with $\lambda_n \to \lambda_*$. First of all, for $ \theta \in [0,d/H_\infty^2)$,  differentiating \eqref{eq-r-theta-representation} under the integral sign, we obtain
																		\beq   \frac{d R}{d\theta}(\theta) &=& \frac{\sigma^2}{2\pi} 
																		\sum_{k=1}^{\infty} \int_{-\pi}^\pi 
																		\frac{\theta^{\,k-1}}{k}\,
																		\operatorname{trace}\!\bigl(\Theta(\omega)^{k}\bigr)
																		d\omega \geq 0\,,
																		\label{ineq-non-negative-integral}
																		\eeq
																		with $\Theta(\omega) =  G(e^{i\omega})  G^*(e^{i\omega})/d\succeq 0$, where we used the Taylor series
																		\[
																		-\frac{\log\det\!\bigl(I-\theta X\bigr)}{\theta}
																		\;=\;
																		\sum_{k=1}^{\infty}\frac{\theta^{\,k-1}}{k}\,
																		\operatorname{trace}\!\bigl(X^{k}\bigr) \quad \mbox{when} \quad \|\theta X\|<1\,,
																		\]
																		for $X=\Theta(\omega)$, noting that $H_\infty=\max_\omega \|G(e^{i\omega})\|$ and $\|\theta \,\Theta(\omega)\|\leq \theta H_\infty^2 /d <1$.  
																		Similarly, differentiating \eqref{ineq-non-negative-integral} again, for $ \theta \in [0,d/H_\infty^2)$,
																		\beq  R''(\theta) = \frac{d^2 }
																		{d^2\theta}R(\theta) &=& 
																		\frac{\sigma^2}{2\pi} 
																		\sum_{k=2}^{\infty} \int_{-\pi}^\pi 
																		\frac{(k-1)\theta^{\,k-2}}{k}\,
																		\operatorname{trace}\!\bigl(\Theta(\omega)^{k}\bigr)
																		d\omega \geq 0\,.
																		\label{ineq-second-derivative-R-theta}
																		\eeq
																		With the non-negativity of the second derivative, we conclude that $R(\theta)$ is a non-decreasing convex function of $\theta$ when $\theta \in [0, \theta_*)$ where $\theta_* := d /H_\infty^2>0$. If we consider $\theta_n:= 2\sigma^2 \lambda_n$, then $\theta_n \uparrow \theta_*$. Consider $R_{max}:=\sup_n R(\theta_n)$. If $R_{\max}$ is finite, then by continuity of the function $R(\theta)$ in $\theta$, we would have $R(\theta_*) = R_{\max}$ and this would conflict with $R(\theta_*)=+\infty$. Therefore, we have necessarily $R_{\max}=+\infty$ and 
																		\beq 
																		R(\theta_n) = \tilde\Lambda(\lambda_n) \to +\infty.
																		\label{ineq-R-blowup}
																		\eeq
																		Similarly, for $\theta\in [0,\theta_*)$, by differentiating 
																		\eqref{ineq-second-derivative-R-theta} again, we can observe the third derivative $R'''(\theta)\geq 0$ and we conclude $R'(\theta)$ is a non-decreasing convex function of $\theta$. Similarly, if $R'_{\max}:=\sup_n R'(\theta_n)<\infty$, then by the continuity and monotonicity of $R'(\theta)$, we have $R'(\theta_*):=R'_{\max}$ and this would imply $R(\theta_*)\leq R(0) + R'_{\max} \theta_*$ which would contradict with the fact $R(\theta_*)=+\infty$. Hence, we \sa{necessarily have} 
																		\beq R'(\theta_n) \to +\infty.
																		\label{ineq-Rprime-theta}
																		\eeq
																		Therefore, combining \eqref{eq-Lambda-derivative}, \eqref{ineq-Rprime-theta} and \eqref{ineq-R-blowup}, we obtain 
																		$\tilde\Lambda'(\lambda_n) = R(\theta_n) + 2\sigma^2\lambda R'(\theta_n)\to \infty,
																		$
																		and this proves (iii) as desired and the LDP follows from the G\"artner-Ellis Theorem. To prove \eqref{eq-rate-function-interval}, note that by the Gaussian noise assumption, the density of $S_K/(K+1)$ is positive and continuous at any point $s \in (0,\infty)$; therefore, $I(s)<+\infty$ for any $s>0$ (see \cite[Ex. 2.2.39]{dembo2009large}. \sa{Thus,} the rate function $I$, as the dual of $\tilde\Lambda$, is continuous on its domain which includes $(0,\infty)$, and we conclude that for the set $E=[t,\infty]$ with $t>0$, $$\inf_{s\in E^o} I(s) = \inf_{s\in \bar{E}} I(s) = \inf_{s\in[t,\infty]} I(s), $$ and based on the LDP for $S_K/(K+1)$, this implies \ys{the equality in}
																		\eqref{eq-rate-function-interval}. \ys{The inequality in \eqref{eq-rate-function-interval} is a consequence of \eqref{convexity-averaging-ldp}.}
																		\subsection{Proof of \cref{coro-risk-index-as-onedim-integral}}\label[appendix]{appendix-proof-coro-risk-index-as-onedim-integral}
																		For momentum methods, \cite[eqn. (59)]{gurbuzbalaban2023robustly} shows that the transfer matrix $G(e^{i\omega})$ admits the explicit representation
																		\begin{equation} 
																			G(e^{i\omega})=\frac{1}{\sqrt{2}} \underset{j=1,...,d}{\Diag}\left[\frac{-\alpha \sqrt{\lambda_j}}{e^{i\omega}-(1+\beta-\beta e^{-i\omega})+\alpha\lambda_j (1+\nu-\nu e^{-i\omega})} \right] U^T,
																			\label{eq-transfer-fun-factor}
																		\end{equation}
																		as a diagonal matrix where $\{\lambda_j\}_{j=1}^d$ are the eigenvalues of $Q$, where we have the eigenvalue decomposition $Q = U \Lambda U^\top$ with $\Lambda$ diagonal satisfying $\Lambda_{jj}=\lambda_j$ and $U$ is a real orthogonal matrix satisfying $U^\top U=\mathsf{I}_d$. Thus, with some further straightforward computations, it can be seen that
																		{\small$$G(e^{i\omega}) G^*(e^{i\omega}) =  \underset{j=1,...,d}{\Diag}\left[\frac{\alpha^2 {\lambda_j}}{2\|e^{i\omega}-(1+\beta-\beta e^{-i\omega})+\alpha\lambda_j (1+\nu-\nu e^{-i\omega}) \|^2} \right] =  \underset{j=1,...,d}{\Diag} \left[ h_{\omega}(\lambda_j) \right].$$}
																		Then, by exploiting the diagonal structure of $G(e^{i\omega}) G^*(e^{i\omega})$, we obtain the equalities
																		$$ \displaystyle \int_{-\pi}^\pi \log \det \left(\mathsf{I}_d - \mgb{\frac{\theta}{d}} G(e^{i\omega}) G^*(e^{i\omega})\right) d\omega =  \displaystyle \sum_{j=1}^d \displaystyle \int_{-\pi}^\pi \log\left(1 -\frac{\theta}{d}  h_\omega(\lambda_j)\right) d\omega,
																		$$
																		$$\displaystyle \int_{-\pi}^\pi \operatorname{trace} \left( G(e^{i\omega}) G^*(e^{i\omega}) \right)d\omega
																		= \displaystyle\sum_{j=1}^d\displaystyle \int_{-\pi}^\pi   h_{\omega}(\lambda_i)
																		d\omega,
																		$$
																		showing that \eqref{eq-entropy-integral} is equivalent to \eqref{eq-entropy-integral-bis}. This completes the proof.
																		
																		\section{Proof of \cref{thm-str-cvx-risk-cost-bound}}\label{proof-risk-sensitive-cost-bound-GMM}
																		\mgb{Let $\theta>0$ be given such that
																			$\sqrt{\theta}\overline{H}_\infty < \ys{1}$. 
																			\sa{
																				For the GMM iterate sequence, consider the partial sum sequence defined as}
																			\begin{equation}
																				\bar S_{k,K} = \sum_{j=k}^K \left[J_{p,q}\mgb{V}(\xi_j) + (1-J_{p,q})\mgb{V}(\xi_{j-1})\right] \quad \mbox{for} \quad 0\leq k\leq K, \label{ineq-bound-SK}
																			\end{equation}
																			where here and in the rest of the proof, we drop the subscripts $c_1, P$ in the Lyapunov function for notational simplicity, and we use the convention that $\xi_{-1}:=\xi_0$.
																		} \mgbis{The sum \eqref{ineq-bound-SK} is based on a modified Lyapunov function that takes convex averages with $V(\xi_{j-1})$ at time $k$.} \mg{Note that when $\tilde{P}_{22} >0$, by a Schur complement argument, we can write  
																			$$\tilde P
																			\sa{=}
																			\begin{bmatrix} \tilde{P}_{11}-\frac{\tilde{P}_{12}^2} {\tilde{P}_{22}} & 0 \\
																				0 & 0 
																			\end{bmatrix} +
																			\begin{bmatrix}  \tilde{P}_{12}^2/\tilde{P}_{22} & \tilde{P}_{12} \\
																				\tilde{P}_{12} & \tilde{P}_{22} 
																			\end{bmatrix} 
																			\succeq 
																			\begin{bmatrix} \tilde{P}_{11}-\frac{\tilde{P}_{12}^2} {\tilde{P}_{22}} & 0 \\
																				0 & 0 
																			\end{bmatrix} 
																			=\begin{bmatrix} \tilde{r}(\tilde{P}) & 0 \\
																				0 & 0 
																			\end{bmatrix},
																			$$
																			and when $\tilde{P}_{22}=0$, we have $\tilde P_{12}=0$ since $\tilde{P}\succeq 0$. Therefore, using $P=\tilde{P}\otimes \mathsf{I}_d$, for any $j\geq 0$,
																			{\small
																				\begin{eqnarray*}V(\xi_j) = c_1 [f(x_j)-f(x_*)] + \begin{bmatrix} {x}_j - x_* \\ {x}_{j-1} - x_*  \end{bmatrix}^\top  P\begin{bmatrix} {x}_j - x_* \\ {x}_{j-1} - x_*  \end{bmatrix} &\geq& c_1[f(x_j)-f(x_*)] +\tilde{r}(\tilde{P}) \|x_j-x_*\|^2 \\
																					&\geq& (c_1 + \frac{2}{L}\tilde{r}(\tilde{P}))[f(x_j)-f(x_*)],
																				\end{eqnarray*}
																			}%
																			where the last inequality follows from the $L$-Lipschitzness of $\nabla f$.
																		}
																		\mg{Hence,} \mgbis{by the definition of $S_K$ given in \eqref{def-risk-cost},} \sa{we get}
																		\beq \frac{\bar S_{0,K}\mg{-(1-J_{p,q})V(\xi_0)}}{c_1 + \frac2L \tilde{r}(\tilde{P})} \geq 
																		\left(\mgb{S_{K-1}+ J_{p,q}[f(x_\ys{K}) - f(x_*)]}
																		\right)
																		\mgb{\geq J_{p,q} S_K}.
																		\label{ineq-SK-upperbound}
																		\eeq
																		\mgb{Let $x^{+}, x, x^{-} \in \mathbb{R}^d$ be given arbitrary vectors.} \mgb{We consider the $2d$-dimensional vectors,
																			\beq  \xi^+:=
																			\begin{bmatrix} x^+ \\ 
																				x\end{bmatrix}\,, \quad \xi := \begin{bmatrix} x \\ 
																				x^{-}\end{bmatrix}\,,
																			\label{def-xi-xiplus}
																			\eeq
																		}
																		\mgb{and for $0\leq k\leq K$, we introduce the deterministic functions$\mathcal{V}_{k,K}$ defined as}
																		
																		\[ 
																		\mathcal V_{k,K}(\mgb{\xi^+}, \xi;\bar{\theta})
																		\coloneqq
																		\frac1{\bar\theta} \log \E\left[e^{\bar\theta \bar S_{k,K}} \mid \xi_k = \mgb{\xi^+}, \xi_{k-1} = \xi \right]
																		\quad \mbox{where} \quad
																		\bar \theta \coloneqq \frac{\theta}{\mgb{2\sigma^2}\left(c_1 + \frac2L \tilde{r}(\tilde P)\right)}.
																		\]
																		Since $\theta \in \left(0,\frac{1}{(\overline{H}_\infty)^2}\right)$, we notice that \beq \bar{\theta} \in (0,\bar{\theta}^{ub}) \quad\mbox{with}\quad \bar{\theta}^{ub}:=\frac{1}{(\overline{H}_\infty)^2 \ys{2\sigma^2} \left(c_1 + \frac2L \tilde{r}(\tilde P)\right)}.
																		\label{ineq-bar-theta}
																		\eeq
																		\mgb{From \eqref{ineq-SK-upperbound} and the definition of the risk-sensitive cost, we can infer that\footnote{We use the convention that the right-hand side of \eqref{ineq-risk-bound-q-zero-1} is $+\infty$, when $\frac{\bar \theta}{J_{p,q}}\geq \bar{\theta}^{ub}$; in which case the bound \eqref{ineq-risk-bound-q-zero-1} is trivial.}
																			\beq 
																			R_{K}(\theta) &\leq& \frac{1}{ J_{p,q}} \frac{1}{\left(c_1 + \frac2L \tilde{r}(\tilde P)\right)\mg{(K+1)}} \left[\mathcal{V}_{0,K}\left(\xi_0, \xi_{-1};\frac{\bar{\theta}}{J_{p,q}}\right)\mg{-(1-J_{p,q})V(\xi_0)}\right]\,, 
																			\label{ineq-risk-bound-q-zero-1}\\
																			R_{K-1}(\theta) &\leq& \frac{1}{\left(c_1 + \frac2L \tilde{r}(\tilde P)\right)\mg{K}} \left[ \mathcal{V}_{0,K}\left(\xi_0, \xi_{-1};\bar{\theta}\right)
																			\mg{-(1-J_{p,q})V(\xi_0)}
																			\right]\,. 
																			\label{ineq-risk-bound-q-zero-2}
																			\eeq
																		}\mgb{In the rest of the proof, we will obtain bounds for the function $\mathcal{V}_{k,K}$ with a backwards induction over $k\in\{0,1,\dots,K\}$ and build on the last two inequalities. More specifically, we will show that for every $0\leq k\leq K$, and for any choice of the vectors $x^+,x, x^-\in\mathbb{R}^d$, we have 
																			\beq  \mathcal{V}_{k,K}(\xi^+,\xi;\bar{\theta}) \leq a_{k,K} V(\xi^+) + b_{k,K} V(\xi) + c_{k,K}(\bar\theta)\,,
																			\label{ineq-to-prove-induction}
																			\eeq for suitably chosen scalars $a_{k,K},b_{k,K}$ and $c_{k,K}$ (that may depend on the choice of $\bar \theta$) where $\xi^+$ and $\xi$ are as in \eqref{def-xi-xiplus}. Clearly, such a bound holds for $k=K$ with $a_{K,K} = J_{p,q}, b_{K,K} = 1-J_{p,q}$ and $c_{K,K}(\bar \theta) = 0$. Next, we consider $k < K$, and introduce the next GMM iterate starting from $\xi_k = \xi^+$: 
																			$$ \xi^{++} := A \xi^+ +B\nabla f(C\xi^+) +  B w_{\ys{k+1}}.$$
																		}
																		Note that by the definition of $\mathcal{V}_{k,K}$, 
																		$$
																		e^{\bar{\theta} \mathcal{V}_{k,K}(\xi^+, \xi;\bar\theta)}
																		= 
																		\E \left[ e^{ \bar{\theta} \left[\left\{J_{p,q} V(\xi^+) + (1-J_{p,q}) V(\xi)\right\} + \mathcal{V}_{k+1}(\xi^{++}, \xi^+\ys{; \bar\theta}) \right]} 
																		\mid
																		\xi_k = \xi^+, \xi_{k-1} = \xi
																		\right]\,,
																		$$
																		\mgb{where the expectation is taken with respect to randomness introduced by the noise $w_{k+1}$}, and, by Theorem \ref{thm-hinfty-bound}, we have 
																		\beq
																		V(\xi_{k+1})
																		\leq 
																		p V(\xi_k)
																		+
																		q V(\xi_{k-1}) 
																		+
																		r\|w_{\ys{k+1}}\|^2,
																		\label{ineq-almost-sure}
																		\eeq
																		\mgb{holding almost surely} where $p,q,r$ are as in \eqref{lyapunov-descent}. We then invoke the inductive hypothesis, where we assume \eqref{ineq-to-prove-induction} holds at step $k+1$, and use the previous inequality:
																		\begin{align*}
																			\mathcal V_{k+1,K}(\xi^{++}, \xi^+;\bar{\theta}) 
																			&\leq 
																			a_{k+1,K} V(\xi^{++}) + b_{k+1,K}V(\xi^{+}) + c_{k+1,K}(\bar\theta) \\
																			&\leq 
																			a_{k+1,K}[pV(\xi^+) + q V(\xi) + r\|w_{\ys{k+1}}\|^2] + b_{k+1,K}V(\xi^+) + c_{k+1,K}(\bar\theta) \\
																			&\leq
																			(a_{k+1,K}p + b_{k+1,K})V(\xi^{\ys{+}})
																			+
																			a_{k+1,K}q V(\xi)
																			+
																			a_{k+1,K}r\|w_{\ys{k+1}}\|^2 + c_{k+1,K}(\bar\theta).
																		\end{align*}
																		This leads to
																		{\small
																			\begin{align*}
																				e^{\bar\theta \mathcal{V}_{k,K}(\xi^+, \xi;\bar\theta)}
																				&=
																				\E\Bigg[
																				\exp\Bigg\{
																				\bar\theta \Big[
																				\big\{J_{p,q} V(\xi^+) + (1 - J_{p,q}) V(\xi)\big\} 
																				+ \mathcal{V}_{k+1,K}(\xi^{++}, \xi^+\ys{; \bar\theta})
																				\Big]
																				\Bigg\}
																				\,\Big|\, \xi_k = \xi^+,\, \xi_{k-1} = \xi
																				\Bigg] \\[1em]
																				&\leq 
																				\E\Bigg[
																				\exp\Bigg\{
																				\bar\theta \Big[
																				\big\{J_{p,q} V(\xi^+) + (1 - J_{p,q}) V(\xi)\big\}
																				+ \big(
																				(a_{k+1,K} p + b_{k+1,K}) V(\xi^+)
																				+ a_{k+1,K} q V(\xi) \\
																				&\hspace{10em}
																				+ a_{k+1,K} r \|\ys{w_{\ys{k+1}}}\|^2
																				+ c_{k+1,K}(\bar\theta)
																				\big)
																				\Big]
																				\Bigg\}
																				\,\Big|\, \xi_k = \xi^+,\, \xi_{k-1} = \xi
																				\Bigg] \\[1em]
																				&=
																				\E\Bigg[
																				\exp\Bigg\{
																				\bar\theta \Big[
																				\big(J_{p,q} + a_{k+1,K} p + b_{k+1,K}\big) V(\xi^+)
																				+ \big(1 - J_{p,q} + a_{k+1,K} q\big) V(\xi) \\
																				&\hspace{10em}
																				+ a_{k+1,K} r \|\ys{w_{\ys{k+1}}}\|^2
																				+ c_{k+1,K}(\bar\theta)
																				\Big]
																				\Bigg\}
																				\,\Big|\, \xi_k = \xi^+,\, \xi_{k-1} = \xi
																				\Bigg] \\[1em]
																				&=
																				\exp\Bigg\{
																				\bar\theta \Big[
																				\big(J_{p,q} + a_{k+1,K} p + b_{k+1,K}\big) V(\xi^+)
																				+ \big(1 - J_{p,q} + a_{k+1,K} q\big) V(\xi)
																				\Big]
																				\Bigg\} \\
																				&\quad\times 
																				\exp\Bigg\{
																				\bar\theta \Bigg[
																				c_{k+1,K}(\bar\theta)
																				+ \frac{1}{\bar\theta} \log \E\left[
																				e^{\frac{\tilde{t}_{k+1,K} \|\ys{w_{\ys{k+1}}}\|^2}{2\sigma^2}}
																				\,\Big|\, \xi_k = \xi^+, \ys{\xi_{k-1}=\xi}
																				\right]
																				\Bigg]
																				\Bigg\} \\[1em]
																				&\leq
																				\exp\Bigg\{
																				\bar\theta \Big[
																				\big(J_{p,q} + a_{k+1,K} p + b_{k+1,K}\big) V(\xi^+)
																				+ \big(1 - J_{p,q} + a_{k+1,K} q\big) V(\xi)
																				\Big]
																				\Bigg\} \\
																				&\quad\times
																				\ys{
																					\exp\Bigg\{
																					\bar\theta \Bigg[
																					c_{k+1,K}(\bar\theta)
																					+ \frac{1}{\bar\theta} \log\left(
																					\frac{1 + \tilde{t}_{k+1,K}}{1 - \tilde{t}_{k+1,K}}
																					\right)
																					\Bigg]
																					\Bigg\}
																				}\,,\end{align*}}
																		where $\mgb{\tilde t_{k+1,K}} \coloneqq \mgb{2\sigma^2} \bar\theta a_{k+1,K} r$, \mgb{provided that $0\leq \tilde t_{k+1,K}< \ys{1}$. In the last inequality, we used \eqref{ineq-cumulant-bound}.} 
																		This implies that \eqref{ineq-to-prove-induction} indeed holds \mg{for \ys{$a_{k,K}, b_{k,K}, c_{k,K}$} values that satisfy the following backwards recursion} 
																		{
																			\beq
																			a_{k,K} &=& J_{p,q} + a_{k+1,K}p + b_{k+1,K}, \quad
																			b_{k,K} = 1-J_{p,q} + a_{k+1,K}q, \label{defn-recurrence-relation-1}\\
																			c_{k,K}(\bar\theta) &=& c_{k+1,K}(\bar\theta) \ys{+ \frac1{\bar\theta}\log\left(\frac{1+\tilde{t}_{k+1,K}}{1-\tilde{t}_{k+1,K}}\right)}\,,\quad 
																			\label{defn-recurrence-relation-2}
																			\eeq}for $0\leq k\leq K-1$ with the boundary condition $a_{K,K} = J_{p,q}, 
																		b_{K,K} = 1-J_{p,q}, c_{K,K}(\bar\theta)= 0$; as long as the condition
																		\beq 0 \leq \tilde{t}_{k+1,K} < \ys{1} \quad \mbox{for every} \quad 0\leq k\leq K-1,
																		\label{cond-on-finite-mgf-tk-bound}
																		\eeq
																		holds.
																		Indeed, by \cref{lem-recurrence-soln}, if we choose 
																		$J_{p,q} = \frac{1}{1+(\lambda_+-p)}$, then the solution $a_{k,K}$ to the recurrence \eqref{defn-recurrence-relation-1} satisfies 
																		$a_{k,K} \leq \frac{1}{1 - (p+q)}$ for all $0\leq k\leq K$. Using this inequality, the bound \eqref{ineq-bar-theta} and the formula \eqref{def-Hinfty-ub}, for $0\leq k\leq K-1$, we have 
																		\begin{align}
																			\tilde{t}_{k+1,K}
																			=
																			2\sigma^2 \bar\theta a_{k+1,K} r
																			< \ys{\frac{1}{\left(c_1 + \frac2L \tilde{r}(\tilde P) \right) (\overline{H}_\infty)^2 [1 - (p+q)]}r = 1}.
																			\label{mgf-control-q-nonzero-case}
																		\end{align}
																		Therefore, the condition \eqref{cond-on-finite-mgf-tk-bound} indeed holds. 
																		From \eqref{formula-ak-bk} \mgbis{which implies that $a_{0,K}+b_{0,K} =\frac{1-\lambda_+^{K+1}}{1-\lambda_+}$}, the $c_{k,K}$ update rule \eqref{defn-recurrence-relation-2} and the fact that $\xi_{-1}=\xi_{0}$, we can conclude that
																		\beq 
																		\mathcal{V}_{0,K}(\xi_0,\xi_{-1};\bar{\theta}) &\leq& a_{0,K} V(\xi_0) + b_{0,K} V(\xi_0) +  c_{0,K}(\bar \theta) \\
																		&=& \ys{\left(\frac{1-\lambda_+^{K+1}}{1-\lambda_+}\right)V(\xi_0)} +\hat{c}(K, \theta),\eeq
																		with
																		\beq 
																		\ys{\hat{c}(K, \theta) 
																			\coloneqq 
																			c_{0,K}(\bar\theta)
																			=
																			\frac{\mgb{2\sigma^2}\left(c_1 + \frac2L \tilde{r}(\tilde P)\right)}{\theta}
																			\sum_{j=0}^{K-1} \log\left(\frac{\left(c_1 + \frac2L \tilde{r}(\tilde P)\right) + \theta a_{j+1,K}r}{\left(c_1 + \frac2L \tilde{r}(\tilde P)\right) - \theta a_{j+1,K}r}\right).}  
																		\label{eq-ck}
																		\eeq
																		Combining this inequality with \eqref{ineq-risk-bound-q-zero-1}-\eqref{ineq-risk-bound-q-zero-2}, we finally obtain
																		the bound \eqref{def-risk-bound-bar-RK}. This completes the proof. 

																		\section{Supporting Lemmas}
																		\begin{lemma}\label[lemma]{lemma-mgf-bound} Under \cref{assump-exp-moment-det-additive}, 
																			for every $k\geq 0$, we have the almost sure bound
																			\beq 
																			\sa{
																				\log \mathbb{E}\left[e^{\frac{t \|\ys{w}_{k+1}\|^2}{2\sigma^2}} \mid \mathcal F_k \right] \leq
																				\log\left(\frac{1+t}{1-t}\right) \quad \mbox{for} \quad 0\leq t< t_*:=1.}
																			\label{ineq-cumulant-bound}
																			\eeq
																		\end{lemma}
																		\begin{proof}
																			\ys{
																				Under \cref{assump-exp-moment-det-additive}, \mgb{for any $\tilde{p}\geq 1$},
																				\begin{align*}
																					\E \left[\|\ys{w}_{k+1}\|^{\tilde{p}} \mid \mathcal F_{k} \right]
																					&=
																					\int_0^\infty \mathbb P(\|\ys{w}_{k+1}\|^{\tilde{p}} \geq t \mid \mathcal F_k)\sa{dt}
																					= \sa{\int_0^\infty \mathbb P(\|\ys{w}_{k+1}\| \geq t^{1/{\tilde{p}}} \mid \mathcal F_k) \,dt} \\
																					&\leq \sa{\int_0^\infty 2 \exp\Big(\frac{-t^{2/{\tilde{p}}}}{2\sigma^2}\Big) \, dt = (2\sigma^2)^{{\tilde{p}}/2} {\tilde{p}}\int_0^\infty e^{-s}s^{{\tilde{p}}/2 - 1} \,ds\, =(2\sigma^2)^{{\tilde{p}}/2} {\tilde{p}} \Gamma({\tilde{p}}/2),} 
																				\end{align*}
																				\sa{where in the second equality we used the change of variables $s:=\frac{t^{2/{\tilde{p}}}}{2\sigma^2}$, and the third equality follows from the definition of $\Gamma(\cdot)$ function.}
																				Thus, \sa{by the dominated convergence theorem, 
																					\begin{align*}
																						\E\left[e^{\frac{t\|\ys{w}_{k+1}\|^2}{2\sigma^2}} \mid \mathcal F_k\right]
																						&=
																						\E\left[
																						1 + \sum_{{\tilde{p}}=1}^\infty \frac{(t\|\ys{w}_{k+1}\|^2)^{\tilde{p}}}{(2\sigma^2)^{\tilde{p}}\tilde{p}!} \mid \mathcal F_k 
																						\right] 
																						=
																						1 + \sum_{{\tilde{p}}=1}^\infty \frac{t^{\tilde{p}}\E\left[\|\ys{w}_{k+1}\|^{2\tilde{p}} \mid \mathcal F_k\right]}{(2\sigma^2)^{\tilde{p}}\tilde{p}!} \\
																						&\leq 
																						1 + \sum_{{\tilde{p}}=1}^\infty \frac{2(2t\sigma^2)^{\tilde{p}} \tilde{p}!}{(2\sigma^2)^{\tilde{p}} \tilde{p}!}
																						= 
																						2\sum_{{\tilde{p}}=0}^\infty t^{\tilde{p}}-1 
																						=
																						\frac{2}{1 - t}-1=\frac{1+t}{1-t}\,,
																					\end{align*}
																					provided $|t| < 1$, this implies \eqref{ineq-cumulant-bound} and we conclude.} 
																			}
																		\end{proof}
																		
																		\begin{lemma}\label[lemma]{lem-recurrence-soln}
																			\mgb{Let $K\geq 1$ be a given positive integer that is fixed.} Consider \mgb{the backwards recurrence \eqref{defn-recurrence-relation-1} where the scalars $a_{k,K}\in\mathbb{R}$ and $b_{k,K}\in\mathbb{R}$ satisfy}
																			\beq
																			a_{k,K} &=& J_{p,q} + a_{k+1,K}p + b_{k+1,K}, \quad
																			b_{k,K} = 1-J_{p,q} + a_{k+1,K}q,
																			\eeq
																			for $0\leq k \leq \mgb{K}$ with $a_{K,K} =  J_{p,q}$, $b_{K,K} =  1-J_{p,q}$ where $p,q$ are \mgb{non-negative scalars as in \eqref{defn-pq} satisfying $p+q<1$}. Then, for any $0\leq k\leq K$,
																			$$\begin{bmatrix}
																				a_{k,K}\\
																				b_{k,K}
																			\end{bmatrix} = \left(\sum_{j=0}^{K-k} \overline{M}^j\right)  v_{p,q} \quad \mbox{where} \quad
																			\quad v_{p,q}= \begin{bmatrix}
																				J_{p,q}\\
																				1-J_{p,q}
																			\end{bmatrix},
																			\quad\mbox{and}
																			\quad
																			\overline{M} = \begin{bmatrix}
																				p & 1\\ q & 0
																			\end{bmatrix}.
																			$$
																			The eigenvalues of $\overline{M}$ are $\lambda_{+,-} = \frac{p \pm \sqrt{p^2+4q}}{2}$ with the largest eigenvalue $\lambda_+ \in [0,1)$. Furthermore, if \ys{we} set $J_{p,q} = \frac{1}{1+(\lambda_+-p)}
																			$, then $v_{p,q}$ is an eigenvector of $\overline{M}$ satisfying $\overline{M} v_{p,q}= \lambda_+ v_{p,q}$ and,
																			\beq \begin{bmatrix}
																				a_{k,K}\\
																				b_{k,K}
																			\end{bmatrix} = \frac{1-\lambda_+^{K-k+1}}{1-\lambda_+}
																			\begin{bmatrix}
																				\frac{1}{1+(\lambda_+-p)}\\
																				\frac{\lambda_+ - p}{1+(\lambda_+-p)}
																			\end{bmatrix}
																			=
																			\frac{1}{1-(p+q)} (1-\lambda_+^{K-k+1})
																			\begin{bmatrix}
																				1 \\ \lambda_+
																				-p
																			\end{bmatrix},
																			\label{formula-ak-bk}
																			\eeq
																			\mgb{for any $0\leq k\leq K$. Furthermore, we have \beq a_
																				{K,K} \leq a_{K-1,K}\leq \cdots \leq a_{1,K} \leq a_{0,K} \leq  \frac{1}{1-(p+q)}.\label{ineq-monotonicity} \eeq}
																		\end{lemma}
																		\begin{proof}For $K$ fixed, the backwards recursion over $k$ for $a_{k,K}$ and $b_{k,K}$ is equivalent to
																			$$
																			\begin{bmatrix}
																				a_{k,K}\\
																				b_{k,K}
																			\end{bmatrix}
																			=\overline{M} \begin{bmatrix}
																				a_{k+1,K}\\
																				b_{k+1,K}
																			\end{bmatrix} + v_{p,q}
																			\quad \mbox{for} \quad 
																			0\leq k\leq K-1,
																			$$
																			subject to boundary conditions  
																			$\begin{bmatrix}
																				a_{K,K} &
																				b_{\ys{K},K}
																			\end{bmatrix}^T = v_{p,q}$. It can be seen that the solution is simply
																			\beq \begin{bmatrix}
																				a_{k,K}\\
																				b_{k,K}
																			\end{bmatrix} = \left(\sum_{j=0}^{K-k} \overline{M}^j\right)  v_{p,q} \quad \mbox{for} \quad 
																			0\leq k\leq K-1.
																			\label{soln-M-powers}
																			\eeq
																			The characteristic equation $\det (\overline{M}-\lambda I)=\lambda^2 - p\lambda - q$ admits the roots $\lambda_{\pm}$ as the eigenvalues of $\overline{M}$. The fact that $\lambda_+\in [0,1)$ is a consequence of the inequalities $p,q\geq 0$ and $p+q<1$. For the choice of $J_{p,q} = \frac{1}{1+(\lambda_+-p)}
																			$, it is easy to check that $\overline{M}v_{p,q}= \lambda_+ v_{p,q}$. In this case, using \eqref{soln-M-powers}, 
																			$$\begin{bmatrix}
																				a_{k,K}\\
																				b_{k,K}
																			\end{bmatrix} = \left(\sum_{j=0}^{K-k} \lambda_+^j\right)  v_{p,q} = \frac{1-\lambda_+^{K-k+1}}{1-\lambda_+} v_{p,q}\,,$$
																			and the formula \eqref{formula-ak-bk} follows noting $(1-\lambda_+)(1+(\lambda_+-p))=1-(p+q)$. Finally, note that by \eqref{formula-ak-bk} and the fact that $\lambda_+ \in [0,1)$, for any $k<K$, we have
																			$$a_{k+1,K} - a_{k,K} =
																			\frac{1}{1-(p+q)} (\lambda_+^{K-k+1}- \lambda_+^{K-k})\leq 0.$$
																			This implies \eqref{ineq-monotonicity}, together with the fact that $a_{0,K} = \frac{1-\lambda_+^{K+1}}{1-(p+q)} \leq \frac{1}{1-(p+q)}$.
																			\end{proof}
																			
																			\begin{lemma}\label[lemma]{lemma-dual-of-base-function}
																				Consider the function $\phi:\mathbb{R} \times \mathbb{R}_+ \times \mathbb{R}_+ \to \mathbb{R}\cup \{+\infty\}$ defined as 
																				$$\phi(\theta,\check a, \check b) = \begin{cases}
																					\check{a} + 4\sigma^2 \check{b} & \mbox{if} \quad \theta = 0,\\
																					\check{a} \mg{+} \frac{2\sigma^2}{\theta}\log\left(\frac{1+\theta \check b}{1-\theta\check b}\right) & \mbox{if}\quad 0< \theta < {1}/{\check b},\\
																					+\infty & \mbox{if} \quad \theta \geq {1}/{\check b} \mbox{ or } \theta<0,
																				\end{cases}
																				$$
																				for $\sigma>0$ given and fixed. Then, for any $t\geq 0$,
																				\beq \Psi(t,\check a,\check b)&:=&
																				\sup_{\theta \in \mathbb{R}}  \frac{\theta}{2\sigma^2} (t- \phi(\theta, \check a, \check b)) 
																				=\sup_{0\leq \theta <\frac{1}{\check b}}  \frac{\theta}{2\sigma^2} (t- \phi(\theta, \check a, \check b)) \\
																				&=& 
																				\begin{cases}
																					\displaystyle \frac{t-\check{a}}{2\sigma^2 \check b} \sqrt{1 - \frac{4\sigma^2 \check b}{t-\check a}} - \log\left( \frac{1 + \sqrt{1 - \frac{4\sigma^2 \check b}{t-\check a}}}{1 - \sqrt{1 - \frac{4\sigma^2 \check b}{t-\check a}}} \right), & \text{if} \quad t -\check a \geq 4\sigma^2 \check b, \\
																					0, & \text{if}\quad 0\leq t -\check a < 4\sigma^2 \check b.
																				\end{cases}
																				\label{eq-rate-function-general-params}
																				\eeq
																				Also, for any $\check c>0$,
																				\beq 
																				\sup_{\theta \in \mathbb{R}}  \frac{\theta}{2\sigma^2} (t- \check c \phi(\theta, \check a, \check b)) 
																				= \check c \, \Psi\left(\frac{t}{\check c}, \check a, \check b\right).
																				\label{eq-scaling-property}
																				\eeq
																			\end{lemma}
																			\begin{proof} By the definition of $\phi(\theta,\check a,\check b)$, we have
																				$$\Psi(t,\check a,\check b)= \sup_{ 0 < \theta < \frac{1}{\check b}}
																				F (\theta,t,\check a,\check b):=\frac{\theta}{2\sigma^2} 
																				\left(
																				t -\check{a}
																				\mg{-} \frac{2\sigma^2}{\theta}\log\left(\frac{1+\theta \check b}{1-\theta\check b}\right)
																				\right).
																				$$
																				The derivative
																				\[
																				\frac{d}{d\theta}F (\theta,t,\check a,\check b) = \frac{t-\check a}{2\sigma^2} - \frac{2\check b }{1 - \left(\theta \check b\right)^2},
																				\]
																				\mgbis{is negative for $0\leq t-a< 4\sigma^2 \check b$ and $0<\theta<1/\check{b}$ in which case we obtain $$\Psi(t,\check a,\check b)= F (0,t,\check a,\check b):= \lim_{\theta\downarrow 0}F (\theta,t,\check a,\check b)=0;$$ whereas for $t-a> 4\sigma^2 \check b$, the derivative is zero at}
																				$\theta = {\theta}_*:= 
																				\sqrt{1 - \frac{4\sigma^2 \check b}{t-\check a}}/{\check b}>0,
																				$
																				in which case, we get $\Psi(t,\check a,\check b)= F (\theta_*,t,\check a,\check b)$ which implies \eqref{eq-rate-function-general-params} as desired. The equality \eqref{eq-scaling-property} is obtained by a simple scaling argument. Indeed, by the positivity of $\check c$, 
																				$$
																				\sup_{\theta \in \mathbb{R}}  \frac{\theta}{2\sigma^2} (t- \check c\phi(\theta, \check a, \check b)) = \check c\sup_{\theta \in \mathbb{R}}  \frac{\theta}{2\sigma^2} \left(\frac{t}{\check c}-  \phi(\theta, \check a, \check b)\right) =  \check c \,\Psi\left(\frac{t}{\check c}, \check a, \check b\right).
																				$$
																			\end{proof}
																			\begin{lemma}\label[lemma]{lemma-finite-time-rate-function} In the setting of \cref{theorem-rate-function-finite-time}, the lower bound \eqref{my-large-deviation-bound-2} holds.
																			\end{lemma}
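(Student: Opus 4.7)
The plan is to upper bound $\overline{R}_K(\theta)$ by a function of the form $\check c\,\phi(\theta,\check a,\check b)$ (with $\phi$ as in \cref{lemma-dual-of-base-function}) for suitable choices of $(\check a,\check b,\check c)$, and then invoke the duality relation \eqref{eq-scaling-property} to convert this into the claimed lower bound on $\bar I_K(t)=\sup_{\theta}\tfrac{\theta}{2\sigma^{2}}\bigl(t-\overline{R}_{K}(\theta)\bigr)$. Since $\theta\geq 0$ throughout the supremum, any upper bound on $\overline{R}_{K}$ translates to a lower bound on $\bar I_K(t)$.

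For the second branch in the maximum, we use $\overline{R}_K(\theta)\leq \tfrac{1}{(c_1+\tfrac{2}{L}\tilde r(\tilde P))(K+1)}\bar c(K+1,\theta)$ from \cref{thm-str-cvx-risk-cost-bound}. The initial-condition piece yields exactly $\check a_K^{(2)}$. For the noise piece $\hat c(K+1,\theta)$, I would observe that the coefficients $a_{j,K+1}=\tfrac{1-\lambda_+^{K+2-j}}{1-(p+q)}$ are non-increasing in $j$ (since $\lambda_+\in[0,1)$), so each of the $K+1$ logarithmic summands is dominated by the $j=1$ term. Using the identity $\tfrac{r a_{1,K+1}}{c_1+\tfrac{2}{L}\tilde r(\tilde P)}=(1-\lambda_+^{K+1})(\overline H_\infty)^2=\check b_K^{(2)}$, which follows directly from the definition of $\overline H_\infty$ in \cref{thm-str-cvx-risk-cost-bound}, the $(K+1)$ in the numerator cancels with the one in the denominator and we obtain $\overline{R}_K(\theta)\leq \phi(\theta,\check a_K^{(2)},\check b_K^{(2)})$. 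Applying \eqref{eq-scaling-property} with $\check c=1$ then gives $\bar I_K(t)\geq \Psi(t,\check a_K^{(2)},\check b_K^{(2)})$.

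For the first branch, I would apply the analogous argument to the second term in the $\min$, namely $\tfrac{1}{J_{p,q}}\bar c(K,\theta/J_{p,q})$. Here $a_{j,K}$ is again non-increasing in $j$, so I bound the $K$-term sum by $K$ copies of the $j=1$ term, using $\tfrac{r a_{1,K}}{c_1+\tfrac{2}{L}\tilde r(\tilde P)}=(1-\lambda_+^{K})(\overline H_\infty)^2 = J_{p,q}\,\check b_K^{(1)}$. After simplification the noise contribution is exactly $\check c_K^{(1)}\cdot \tfrac{2\sigma^{2}}{\theta}\log\bigl(\tfrac{1+\theta\check b_K^{(1)}}{1-\theta\check b_K^{(1)}}\bigr)$ with $\check c_K^{(1)}=K/((K+1)J_{p,q})$, and the initial-condition term is exactly $\check c_K^{(1)}\check a_K^{(1)}$. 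Thus $\overline R_K(\theta)\leq \check c_K^{(1)}\phi(\theta,\check a_K^{(1)},\check b_K^{(1)})$ on $[0,1/\check b_K^{(1)})$, and the scaled duality \eqref{eq-scaling-property} of \cref{lemma-dual-of-base-function} yields $\bar I_K(t)\geq \check c_K^{(1)}\Psi\bigl(t/\check c_K^{(1)},\check a_K^{(1)},\check b_K^{(1)}\bigr)$. Taking the maximum of the two bounds establishes \eqref{my-large-deviation-bound-2}.

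The main obstacle is purely bookkeeping: keeping track of the rescalings between $\theta$ and $\theta/J_{p,q}$, and recognizing that the algebraic constants $\check a_K^{(i)},\check b_K^{(i)},\check c_K^{(1)}$ defined in \cref{theorem-rate-function-finite-time} are precisely those produced by the cancellations after applying the monotonicity $a_{j,K}\leq a_{1,K}$ and the identity $r/(c_1+\tfrac{2}{L}\tilde r(\tilde P))=(1-(p+q))(\overline H_\infty)^2$. No new analytic inequality is needed beyond these two observations and \cref{lemma-dual-of-base-function}.
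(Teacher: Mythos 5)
Your proposal is correct and follows essentially the same route as the paper's proof: bound each logarithmic summand in $\hat c(\cdot,\cdot)$ by the $j=1$ term using the monotonicity of $a_{j,K}$, identify the resulting constants with $\check a_K^{(i)},\check b_K^{(i)},\check c_K^{(1)}$ via the identity $r/(1-(p+q))=(c_1+\tfrac{2}{L}\tilde r(\tilde P))(\overline H_\infty)^2$, and then pass to the dual through \cref{lemma-dual-of-base-function}. The only cosmetic difference is that the paper packages the monotonicity step as the non-increasing function sequence $\hat\phi_j$, and the noise contribution you call ``exact'' is in fact only upper-bounded by $\check c_K^{(1)}$ times the log term (since $J_{p,q}\le 1$), which is the direction needed anyway.
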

																			\begin{proof}
																				Because $a_{j+1,K}$ is non-increasing for $j\in \{0,\dots,K-1\}$, note that the function sequence
																				$$
																				\hat\phi_j(\theta):=
																				\begin{cases}
																					\frac{\mgb{2\sigma^2}}{\theta}
																					\log\left(\frac{\left(c_1 + \frac2L \tilde{r}(\tilde P)\right) + \theta a_{j+1,K}r}{\left(c_1 + \frac2L \tilde{r}(\tilde P)\right) - \theta a_{j+1,K}r}\right), & \text{if } 0\leq \theta \,(\overline{H}_\infty)^2 < 1,\\
																					+\infty & \mbox{else},
																				\end{cases} 
																				$$
																				with the convention 
																				$$\hat\phi_j(0) = \lim_{\theta\downarrow 0} \frac{\mgb{2\sigma^2}}{\theta}
																				\log\left(\frac{\left(c_1 + \frac2L \tilde{r}(\tilde P)\right) + \theta a_{j+1,K}r}{\left(c_1 + \frac2L \tilde{r}(\tilde P)\right) - \theta a_{j+1,K}r}\right)= 4 \sigma^2 a_{j+1,K}r\,,$$
																				is non-increasing, i.e., $\hat\phi_{j_1}(\theta) \geq \hat\phi_{j_2}(\theta)$ for every $\theta$ whenever $0\leq j_1\leq j_2\leq K-1$. Therefore, in the setting of \cref{thm-str-cvx-risk-cost-bound}, we have 
																				$$ \hat{c}(K,\theta)\leq K \left(c_1 + \frac2L \tilde{r}(\tilde P)\right) \hat \phi_0(\theta), $$
																				and thus from \cref{thm-str-cvx-risk-cost-bound}, we obtain
																				$$ \overline{R}_{K}(\theta) \leq \min \bigl(\check c_K^{(1)} \phi(\theta,\check a_K^{(1)}, \check  b_K^{(1)}),  \phi(\theta,\check a_K^{(2)}, \check b_K^{(2)})\bigr),
																				$$
																				where we used $a_{1,K} = \frac{1-\lambda_+^{K}}{1-(p+q)}$ implied by \eqref{formula-ak-bk} and the formula \eqref{def-Hinfty-ub} \ys{with $\phi(\theta,\cdot,\cdot)$ defined as in \cref{lemma-dual-of-base-function}}. Then, 
																				using \cref{lemma-dual-of-base-function}, we conclude that
																				{\small
																					\beqs
																					\bar{I}_K(t)&=&\sup_{0\leq \theta<\frac{1}{(\overline{H}_\infty)^2}} \frac{\theta}{2\sigma^2}\left(t-\overline{R}_{K}(\theta)\right) \\
																					&\geq&
																					\max\bigg\{
																					\sup_{0\leq \theta<\frac{1}{(\overline{H}_\infty)^2}} \frac{\theta}{2\sigma^2}\left(t-\check c_K^{(1)} \phi(\theta,\check a_K^{(1)}, \check  b_K^{(1)})\right), \\
																					&& \qquad \qquad  \qquad \qquad\qquad \qquad
																					\sup_{0\leq \theta<\frac{1}{(\overline{H}_\infty)^2}} \frac{\theta}{2\sigma^2}\left(t- \phi(\theta,\check a_K^{(2)}, \check  b_K^{(2)})\right)\bigg\} 
																					\\
																					&=& 
																					\max\left\{ \check c_K^{(1)}  \Psi\left(\frac{t}{\check c_K^{(1)}},\check a_K^{(1)} , \check b_K^{(1)} \right), \Psi\left(t,\check a_K^{(2)} , \check b_K^{(2)} \right)
																					\right\}.
																					\eeqs 
																				}
																			\end{proof}

																			\vskip 6mm
																			\noindent{\bf Acknowledgments} This research was supported in part by the Office of Naval Research under award number N00014-24-1-2628 and N00014-24-1-2666.
																			
																		\end{document}

\end{document}